\definecolor{webgreen}{rgb}{0,.5,0}
\definecolor{webbrown}{rgb}{.8,0,0}
\definecolor{emphcolor}{rgb}{0.95,0.95,0.95}
\ifpdf \hypersetup{pdftex,
            pdfstartview=FitH, 
            bookmarksopen=true,
            bookmarksnumbered=true
} \else \hypersetup{dvips} \fi
\newcommand {\ud}{{\rm d}}
\newcommand {\B}{\mathcal{B}}
\numberwithin{equation}{section}
\newtheorem{theorem}{Theorem}[section]
\newtheorem{proposition}{Proposition}[section]
\newtheorem{corollary}{Corollary}[section]
\newtheorem{remark}{Remark}[section]
\newtheorem{lemma}{Lemma}[section]
\numberwithin{remark}{section} \numberwithin{proposition}{section}
\numberwithin{corollary}{section}
\newcommand {\R}{\mathbb{R}}
\newcommand {\p}{\mathbb{P}}
\newcommand {\E}{\mathbb{E}}
\newcommand{\diff}{{\rm d}}
\newcommand{\lev}{L\'{e}vy }
\newcommand{\e}{\mathbb{E}}
\title[Refraction-reflection strategies in the dual model]{Refraction-reflection strategies in the dual model}
\thanks{This version: \today.  J. L. P\'erez  is  supported  by  CONACYT,  project  no.\ 241195.
K. Yamazaki is in part supported by MEXT KAKENHI Grant Number  26800092.}
\author[J. L. P\'erez]{Jos\'e-Luis P\'erez$^*$}
\thanks{$*$\, Department of Probability and Statistics, Centro de Investigaci\'on en Matem\'aticas A.C. Calle Jalisco s/n. C.P. 36240, Guanajuato, Mexico. Email: jluis.garmendia@cimat.mx.  }
\author[K. Yamazaki]{Kazutoshi Yamazaki$^\dag$}
\thanks{$\dag$\, (Corresponding author) Department of Mathematics,
Faculty of Engineering Science, Kansai University, 3-3-35 Yamate-cho, Suita-shi, Osaka 564-8680, Japan. Email: kyamazak@kansai-u.ac.jp.  Phone: +81-(0)6-6368-1527.  }
\date{}
\begin{document}

\begin{abstract}
We study the dual model with capital injection under the additional condition that the dividend strategy is absolutely continuous.  We consider a \emph{refraction-reflection strategy} that pays dividends at the maximal rate whenever the surplus is above a certain threshold, while capital is injected so that it stays positive. The resulting controlled surplus process becomes the spectrally positive version of the refracted-reflected process recently studied by P\'erez and Yamazaki \cite{YP20015}.  We study various fluctuation identities of this process and prove the optimality of the refraction--reflection strategy. Numerical results on the optimal dividend problem are also given.

\noindent \small{\textbf{Key words:}  
dividends; capital injection;  refracted-reflected \lev processes; scale functions; dual model.\\
\noindent  JEL Classifications: C44, C61, G24, G32, G35\\
\noindent  AMS 2010 Subject Classifications: 60G51, 93E20, 91B30}\\
\end{abstract}

\maketitle

\section{Introduction}

We revisit the dual model where the surplus of a company is modeled by a \lev process with positive jumps. This is an appropriate model for a company driven by inventions or discoveries. The seminal papers by Avanzi et al.\ \cite{AGS2007, AGS2008} studied the case of a compound Poisson process with hyperexponentially distributed jumps (with or without Brownian motion); they showed that the expected net present value (NPV) of total discounted dividends until ruin is maximized by a barrier strategy. The underlying process is then generalized by Bayraktar et al.\ \cite{BKY} to a general spectrally positive \lev process via fluctuation theory and scale functions. There are several variants of this model; see, e.g., Bayraktar et al.\ \cite{BKY2} for the case with fixed costs and Avanzi et al.\  \cite{ATW2014} and P\'erez and Yamazaki \cite{YP2016} for periodic payment opportunities.

In this paper, we introduce simultaneously two existing extensions: bail-out with capital injection and an absolutely continuous condition. 

In the former, it is assumed that the shareholders are required to provide capital injection in order to avoid ruin. For the spectrally negative \lev model, Avram et al.\ \cite{APP2007} showed that it is optimal to reflect the surplus process at zero and at some upper boundary, with the optimally controlled process being the doubly reflected \lev process of \cite{P2003}.  In the dual model, it is again optimal to reflect at two boundaries; in particular, the hyperexponential jump size case has been solved by Avanzi et al.\ \cite{ASW2011} and the spectrally positive case by Bayraktar et al.\ \cite{BKY}.

In the latter, the rate at which the dividends are paid is bounded and, instead,  strategies must be absolutely continuous with respect to the Lebesgue measure.  For a spectrally negative L\'evy surplus process, Kyprianou et al.\ \cite{KLP} showed the optimality of the \emph{refraction strategy} under a completely monotone assumption on the \lev measure; namely, it is optimal to pay dividends at the maximal rate as long as the surplus is above some fixed level. The optimally controlled process becomes then the \emph{refracted \lev process} of Kyprianou and Loeffen \cite{KL}.  The related dual model has been solved by Yin et al.\ \cite{YSP} where they showed the optimality of the refraction strategy for a general spectrally positive L\'evy process. 

Under both capital injection and the absolutely continuous assumption, it is a natural conjecture that a \emph{refraction-reflection strategy} is optimal.  Namely, it is optimal to pay dividends at the maximal rate above a certain level while injecting capital so as to stay nonnegative: the controlled process becomes a \emph{refracted-reflected} process, recently studied by P\'erez and Yamazaki \cite{YP20015}.  In this paper, we focus on the dual model and hence we need its spectrally positive version.

The objective of the paper is twofold.
\begin{enumerate}
\item We obtain fluctuation identities that will be useful in analyzing the performance of a refraction-reflection strategy.  In particular, we compute using the scale function the resolvent measure, the expected NPVs of dividend payouts and capital injection as well as the occupation time, of the controlled surplus process under refraction-reflection strategies.  We take similar steps as in the spectrally negative case \cite{YP20015}.  However, there are major differences and difficulty in pursuing these. 

Differently from the spectrally negative case, the derived expressions will contain the derivative of the scale function, which is not necessarily differentiable; one needs to be careful about the selection of the right- and left-hand derivatives that differ for the case of bounded variation when the \lev measure has atoms.  In addition, the refracted-reflected process can stay at zero for a positive amount of time and hence the derived expressions can have extra terms.
%
\item We then use these results to obtain the optimal strategy in the optimal dividend problem with capital injection described above: the optimal refraction level as well as the value function are concisely expressed in terms of the scale function. The candidate strategy is first chosen so that the value function becomes continuously differentiable at the boundary for the case of bounded variation, and twice continuously differentiable for the case of unbounded variation. Its optimality is confirmed by a verification lemma, which is adapted from related results under absolutely continuous assumptions as in e.g., Hern\'andez-Hern\'andez et al.\ \cite{HPY2015} and Kyprianou et al.\ \cite{KLP}.
\end{enumerate}

The rest of the paper is organized as follows. In Section \ref{section_preliminary}, we present an overview on scale functions and some fluctuation identities related to the spectrally positive \lev processes and their respective reflected and refracted processes.   In Section  \ref{section_refracted_reflected}, we give a construction of the refracted-reflected spectrally positive \lev process and obtain several fluctuation identities associated with this class of processes.  In Section \ref{section_dividend}, we solve the dividend problem with capital injection under the absolutely continuous assumption. Finally, in Section \ref{numerical_section}, we give numerical results on the dividend problem and confirm the optimality of the refraction-reflection strategy, along with sensitivity analyses.


\section{Preliminaries} \label{section_preliminary}

In this section, we review the spectrally positive \lev process and their reflected and refracted processes, as well as their fluctuation identities written in terms of the scale function.  Regarding the refracted spectrally positive \lev process, we begin with the observation that it can be written as the negative of the spectrally negative case as in Kyprianou and Loeffen \cite{KL}: the fluctuation identities can thus be obtained directly from \cite{KL}. 

\subsection{Spectrally positive \lev processes}
Let $Y=(Y_t; t\geq 0)$ be a L\'evy process defined on a  probability space $(\Omega, \mathcal{F}, \p)$.  For $x\in \R$, we denote by $\p_x$ the law of $Y$ when it starts at $x$ and write for convenience  $\p$ in place of $\p_0$. Accordingly, we shall write $\e_x$ and $\e$ for the associated expectation operators. In this paper, we shall assume throughout that $Y$ is \textit{spectrally positive},   meaning here that it has no negative jumps and that it is not a subordinator. Its Laplace exponent $\psi_Y:[0,\infty) \to \R$, i.e.
\[
\e\big[{\rm e}^{-\theta Y_t}\big]=:{\rm e}^{\psi_Y(\theta)t}, \qquad t, \theta\ge 0,
\]
is given by the \emph{L\'evy-Khintchine formula}
\begin{equation}\label{lk}
\psi_Y(\theta):=\gamma_Y\theta+\frac{\sigma^2}{2}\theta^2+\int_{(0, \infty)}\big({\rm e}^{-\theta x}-1+\theta x\mathbf{1}_{\{x<1\}}\big)\Pi(\ud x), \quad \theta \geq 0,
\end{equation}
where $\gamma_Y \in \R$, $\sigma\ge 0$, and $\Pi$ is a measure on $(0, \infty)$ called the L\'evy measure of $Y$ that satisfies
\[
\int_{(0, \infty)}(1\land x^2)\Pi(\ud x)<\infty.
\]

It is well-known that $Y$ has paths of bounded variation if and only if $\sigma=0$ and $\int_{(0,1)} x\Pi(\mathrm{d}x) < \infty$; in this case, $Y$ can be written as
\begin{equation}
Y_t=-c_Yt+S_t, \,\,\qquad t\geq 0,\notag
\end{equation}
where 
\begin{align}
c_Y:=\gamma_Y+\int_{(0,1)} x\Pi(\mathrm{d}x) \label{def_drift_finite_var}
\end{align}
 and $(S_t; t\geq0)$ is a driftless subordinator. Note that  necessarily $c_Y>0$, since we have ruled out the case that $Y$ has monotone paths; its Laplace exponent is given by
\begin{equation*}
\psi_Y(\theta) = c_Y \theta+\int_{(0,\infty)}\big( {\rm e}^{-\theta x}-1\big)\Pi(\ud x), \quad \theta \geq 0.
\end{equation*}

The \emph{\lev process reflected at the lower boundary $0$} is a strong Markov process written concisely by
 \begin{align}
 	U_t := Y_t+\sup_{0 \leq s\leq t}(-Y_s)\vee0,\qquad t\geq0. \label{reflected_levy}
 \end{align}
 The supremum term pushes the process upward whenever it attempts to down-cross the level $0$; as a result the process only takes values on $[0, \infty)$.

For fixed $\delta \geq 0$ and $b \in \R$, the \emph{refracted spectrally positive \lev process $A$} is defined as the unique strong solution to the stochastic differential equation (SDE)
\begin{equation}
 A_t= Y_t- \delta \int_0^t 1_{\{A_s>b\}}\diff s,\qquad\text{$t\geq0$.}\label{refracted_levy}
 \end{equation}
 Informally speaking, a linear drift at rate $\delta$ is subtracted from the increments of  the underlying \lev process $Y$ whenever it exceeds $b$. 
 
The existence and uniqueness of this process is immediate by the observation that its dual process $-A$ becomes a refracted spectrally negative \lev process of \cite{KL}.  Indeed, if we define a drift-changed process
 \begin{align}
 X_t := Y_t - \delta t, \quad t \geq 0,\label{def_X}
 \end{align}
 the above SDE is equivalent to 
 \begin{align}
 -A_t= -Y_t+ \delta \int_0^t 1_{\{A_s>b\}}\diff s =  - X_t - \delta \int_0^t  1_{\{ A_s \leq b\}}\diff s = -X_t - \delta \int_0^t 1_{\{-A_s \geq -b\}}\diff s. \label{connection_w_SN_case}
 \end{align}
Given that $\p_x (-A_t = -b ) = 0$ for Lebesgue a.e.\ $t > 0$ by Corollary 22 of \cite{KL}, this is the  SDE describing the refracted spectrally negative \lev process $-A$ with the underlying spectrally negative \lev process $-X$ and the refraction level $-b$.

\subsection{Review of scale functions}  \label{section_scale_functions}
Before discussing further on the refracted \lev process $A$, we review here the scale function and its applications on the spectrally positive \lev process and its reflected process. As we need to deal with the fluctuation of the two processes $Y$ and $X$  to describe those of their associated refracted-reflected processes, we define two scale functions here.

Fix $q \geq 0$. We use $\mathbb{W}^{(q)}$ and $W^{(q)}$ for the scale functions of the spectrally negative \lev processes $-Y$ and $-X$, respectively.  These are the mappings from $\R$ to $[0, \infty)$ that take value zero on the negative half-line, while on the positive half-line they are strictly increasing functions that are defined by their Laplace transforms:
\begin{align} \label{scale_function_laplace}
\begin{split}
\int_0^\infty  e^{-\theta x} \mathbb{W}^{(q)}(x) \diff x &= \frac 1 {\psi_Y(\theta) -q}, \quad \theta > \varphi(q), \\
\int_0^\infty  e^{-\theta x} W^{(q)}(x) \diff x &= \frac 1 {\psi_X(\theta)-q}, \quad \theta > \Phi(q),
\end{split}
\end{align}
where $\psi_X(\theta) := \psi_Y(\theta) + \delta \theta$, $\theta \geq 0$, is the Laplace exponent for $X$ and
\begin{align}
\begin{split}
\varphi(q) := \sup \{ \lambda \geq 0: \psi_Y(\lambda) = q\}  \quad \textrm{and} \quad  \Phi(q) := \sup \{ \lambda \geq 0: \psi_X(\lambda) = q\} . 
\end{split}
\label{def_varphi}
\end{align}
In particular, when $q=0$, we shall drop the superscript.
By the strict  convexity of $\psi_Y$ on $[0, \infty)$, we derive the inequality $\varphi(q) > \Phi(q) > 0$ for $q > 0$ and  $\varphi(q) \geq \Phi(q) \geq 0$ for $q = 0$.

We also define, for $x \in \R$, 
\begin{align*}
\overline{\mathbb{W}}^{(q)}(x) :=  \int_0^x \mathbb{W}^{(q)}(y) \diff y \quad \textrm{and} \quad
\mathbb{Z}^{(q)}(x) := 1 + q \overline{\mathbb{W}}^{(q)}(x).
\end{align*}
Noting that $\mathbb{W}^{(q)}(x) = 0$ for $-\infty < x < 0$, we have

\begin{align}
\overline{\mathbb{W}}^{(q)}(x) = 0 \quad \textrm{and} \quad \mathbb{Z}^{(q)}(x) = 1, \quad x \leq 0.
\end{align}
In addition, we define $\overline{W}^{(q)}$ and $Z^{(q)}$ analogously for $-X$.  These scale functions are related by the following equalities 
%
\begin{align}\label{RLqp}
&\delta \int_0^x\mathbb{W}^{(q)}(x-y)  W^{(q)}(y) \ud y=\overline{\mathbb{W}}^{(q)}(x)-\overline{W}^{(q)}(x), \quad  x \in \R \; \textrm{and} \; q \geq 0,
\end{align}
which can be proven by showing that the Laplace transforms on both sides are equal.

Regarding their asymptotic values as $x \downarrow 0$ we have, as in Lemmas 3.1 and 3.2 of \cite{KKR}, 
\begin{align}\label{eq:Wqp0}
\begin{split}
\mathbb{W}(0)=  \mathbb{W}^{(q)} (0) &= \left\{ \begin{array}{ll} 0 & \textrm{if $Y$ is of unbounded
variation,} \\ c_Y^{-1} & \textrm{if $Y$ is of bounded variation,}
\end{array} \right. \\
W(0)=  W^{(q)} (0) &= \left\{ \begin{array}{ll} 0 & \textrm{if $X$ is of unbounded
variation,} \\ c_X^{-1} & \textrm{if $X$ is of bounded variation,}
\end{array} \right.  
\end{split}
\end{align}
and 
\begin{align} \label{W_zero_derivative}
\begin{split}
\mathbb{W}^{(q) \prime}_+ (0) &:= \lim_{x \downarrow 0}\mathbb{W}^{(q)\prime}_+ (x) =
\left\{ \begin{array}{ll}  \frac 2 {\sigma^2} & \textrm{if }\sigma > 0, \\
\infty & \textrm{if }\sigma = 0 \; \textrm{and} \; \Pi(0,\infty) = \infty, \\
\frac {q + \Pi(0, \infty)} {c_Y^2} &  \textrm{if }\sigma = 0 \; \textrm{and} \; \Pi(0,\infty) < \infty,
\end{array} \right. \\
W^{(q)\prime}_+ (0) &:= \lim_{x \downarrow 0}W^{(q)'}_+ (x) =
\left\{ \begin{array}{ll}  \frac 2 {\sigma^2} & \textrm{if }\sigma > 0, \\
\infty & \textrm{if }\sigma = 0 \; \textrm{and} \; \Pi(0, \infty) = \infty, \\
\frac {q + \Pi(0, \infty)} {c_X^2} &  \textrm{if }\sigma = 0 \; \textrm{and} \; \Pi(0, \infty) < \infty,
\end{array} \right. 
\end{split}
\end{align}
and, as in Lemma 3.3 of \cite{KKR}, 
\begin{align}
\begin{split}
 e^{-\varphi(q) x}\mathbb{W}^{(q)} (x) \nearrow \psi_Y'(\varphi(q))^{-1} \quad \textrm{and} \quad  e^{-\Phi(q) x}W^{(q)} (x) \nearrow \psi_X'(\Phi(q))^{-1} \quad \textrm{as } x \rightarrow \infty,
\end{split}
\label{W_q_limit}
\end{align}
where in the case  $\psi_{Y,+}'(0) = 0$  or $\psi_{X,+}'(0) = 0$, the right-hand side, when $q=0$,  is understood to be infinity. Here and for the rest of the paper, $g_+'(x)$ and $g_-'(x)$,  for any function $g$, are the right-hand and left-hand derivatives, respectively, at $x$.

\begin{remark} \label{remark_smoothness} 
	It is known that the right-hand and left-hand derivatives of the scale function always exist for all $x > 0$.
If $Y$ is of unbounded variation or the \lev measure is atomless, it is known that $\mathbb{W}^{(q)}$ and $W^{(q)}$ are $C^1(\R \backslash \{0\})$.  For more comprehensive results on the smoothness, see \cite{Chan2011}.
\end{remark}

We shall see in later sections that the paths of a refracted-reflected \lev process can be decomposed into those of $X$ and $U$, which are defined in \eqref{def_X} and \eqref{reflected_levy}, respectively.  Here, we summarize a few known identities of these processes in terms of the scale functions that will be used later in the paper.

For the drift-changed process {$X$, let us define the first down- and up-crossing times, respectively, by
\begin{align}
	\label{first_passage_time}
	\tau_a^- := \inf \left\{ t > 0: X_t < a \right\} \quad \textrm{and} \quad \tau_a^+ := \inf \left\{ t > 0: X_t >  a \right\}, \quad a \in \R;
\end{align}
here and throughout, let $\inf \varnothing = \infty$.
Then, for any $a > b$ and $x \leq a$, 
\begin{align}
	\begin{split}
		\E_x \left( e^{-q \tau_b^-} 1_{\left\{ \tau_a^+ > \tau_b^- \right\}}\right)  &= \frac {W^{(q)}(a-x)}  {W^{(q)}(a-b)}, \\
		\E_x \left( e^{-q \tau_a^+} 1_{\left\{ \tau_a^+ < \tau_b^- \right\}}\right)&= Z^{(q)}(a-x) -  Z^{(q)}(a-b) \frac {W^{(q)}(a-x)}  {W^{(q)}(a-b)}.
	\end{split}
	\label{laplace_in_terms_of_z}
\end{align}
In addition, the \emph{$q$-resolvent measure} is known, for any Borel set $B$ on $[b,a]$, to have the following form 
\begin{align} \label{resolvent_density}
\begin{split}
	\E_x \Big( \int_0^{ \tau^+_a \wedge \tau_{b}^- } e^{-qt} 1_{\left\{ X_t \in B  \right\}} \diff t\Big) 
	=\frac{W^{(q)}(a-x)}{W^{(q)}(a-b)} \overline{\Gamma}^{(q)}_a(a-b;B)-\overline{\Gamma}^{(q)}_a(a-x;B),  \quad b \leq x \leq a,
	\end{split}
\end{align} 
where  
\begin{align*}
\overline{\Gamma}^{(q)}_a(l;B):= \int_{a-l}^a 1_{\{y\in B\}} W^{(q)}(y-a+l)\diff y;
\end{align*}
 see Theorem 8.7 of \cite{K}. 


Regarding the reflected process $U$, let
\begin{align}
	\eta^+_b:=\inf\{t>0:U_t>b\}, \quad b \in \R. \label{def_kappa_time}
\end{align}
By Theorem 1 (ii) of \cite{P2004}, 
\begin{align}\label{rsr}
	\e_x\bigg(\int_0^{\eta_b^+}e^{-qt}1_{\{ U_t \in B \}}\ud t \bigg)=\frac{\mathbb{W}^{(q)}(b-x)}{\mathbb{W}_+^{(q)\prime}(b)}\Gamma_b^{(q) \prime}(b;B)-\Gamma_b^{(q)}(b-x;B), \quad 0 \leq  x \leq b,
\end{align} 
where 
\begin{align*}
\Gamma_b^{(q)}(l;B)&:=  \int_{b-l}^b1_{\{y\in B\}}\mathbb{W}^{(q)}(y-b+l)\diff y,\\
\Gamma_b^{(q)\prime}(l;B)&:=  1_{\{ b-l\in B\}} \mathbb{W}(0) + \int_{b-l}^b1_{\{y\in B\}}\mathbb{W}^{(q) \prime}_+(y-b+l)\diff y.
\end{align*}
In particular,
\begin{align} \label{upcrossing_time_reflected}
	\e_x\big(e^{-q\eta_b^+}\big)=\mathbb{Z}^{(q)}(b-x)  -q\frac{\mathbb{W}^{(q)}(b-x)}{\mathbb{W}^{(q)\prime}_+(b)} \mathbb{W}^{(q)}(b), \quad 0 \leq x \leq b.
\end{align}
\par
It is known that a spectrally positive \lev process creeps upwards (i.e.\ $\p_x ( Y_{\tilde{\tau}_b^+}= b, \tilde{\tau}_b^+ < \infty ) > 0$ for $x < b$ with $\tilde{\tau}_b^+$ being the first up-crossing time for $Y$) if and only if $\sigma > 0$ (see Exercise 7.6 of \cite{K}). By this and Theorem 1 in \cite{P2007}, if $Y$ is of bounded variation,  the joint law of the first up-crossing time and the overshoot at $b$ for $U$ is given by, for any bounded measurable function $ h :[0,\infty)\to\R$, 
\begin{align}\label{overshoot_reflected}
\begin{split}
	\e_x\bigg(e^{-q\eta_b^+}h(U_{\eta_b^+})\bigg)&=\int_0^{b}\int_{(b-y, \infty)}h(y+u)\left\{\frac{\mathbb{W}^{(q)}(b-x)}{\mathbb{W}_+^{(q)\prime}(b)}\mathbb{W}_+^{(q)\prime}(y)-\mathbb{W}^{(q)}(y-x)\right\}\Pi(\diff u)\diff y \\
	&\qquad +\mathbb{W}(0)\int_{(b, \infty)} h (u) \frac{\mathbb{W}^{(q)}(b-x)}{\mathbb{W}_+^{(q)\prime}(b)} \Pi(\diff u).
	\end{split}
\end{align}
In addition, if we define, for $t \geq 0$, $\widetilde{R}_t := \sup_{s\leq t}(-Y_s)\vee0$ so that $U_t = Y_t + \widetilde{R}_t$, then
\begin{align}
	\mathbb{E}_x\Big(\int_{[0,\eta_b^+]}e^{-qt}\ud \widetilde{R}_t\Big)&= \frac{\mathbb{W}^{(q)}(b-x)}{\mathbb{W}_+^{(q)\prime}(b)}, \quad 0 \leq x \leq b; \label{capital_injection_identity_SP}
\end{align}
see page 167 in the proof of Theorem 1 of \cite{APP2007}.



\subsection{Fluctuations of refracted spectrally positive \lev processes}

We now discuss how the fluctuation identities of the refracted spectrally positive \lev process $A$ can be computed using directly the results on the spectrally negative case by Kyprianou and Loeffen \cite{KL}.
 


Let us note that if we consider the process $(a-A_t;  t\geq0)$ then it satisfies, by \eqref{connection_w_SN_case},
\begin{align*}
a-A_t=a-X_t-\delta\int_0^t1_{\{-A_s \geq -b\}}\diff s=a-X_t-\delta\int_0^t1_{\{a-A_s \geq a-b\}}\diff s, \quad t \geq 0,
\end{align*}
see the discussion after \eqref{connection_w_SN_case} on how the event $\{a-A_s = a-b\}$ can be ignored.
Therefore $a-A$ is a refracted spectrally negative L\'evy process, with the driving process $-X$ starting at $a-x$ with the refraction level $a-b$. On the other hand, we note that
\begin{align*}
&\inf\{t>0:A_t<0\}=\inf\{t>0:a-A_t>a\}, \quad\text{and}\\
&\inf\{t>0:A_t>a\}=\inf\{t>0:a-A_t<0\}.
\end{align*}}

Using these observations, the spectrally positive versions of the results in \cite{KL} (in particular, Theorems 4, 5, and 6) can be derived simply by change of variables.  In addition, as their corollary, these give the expected NPV of dividends until ruin under the refraction strategy in the dual model as in Yin et al.\ \cite{YSP}.

\section{Refracted-reflected spectrally positive L\'evy processes}\label{section_refracted_reflected}

\par For fixed $b > 0$, we construct the path of a \lev process which is reflected at the lower barrier $0$ and refracted at the upper barrier $b$. The stochastic process moves on $[0,b)$ as a reflected \lev process. Whenever the process is above $b$, a linear drift at rate $\delta$ is subtracted from the increments of the reflected \lev process.  
The process can be formally constructed by the recursive algorithm given below; while it is essentially the same as in the spectrally negative case \cite{YP20015},  we provide it here for the sake of completeness.

\begin{center}
	\line(1,0){300}
\end{center}
\textbf{Construction of the refracted-reflected spectrally positive \lev process $V$ under $\p_x$}
\begin{description}
	\item[Step 0] Set $V_{0-}=x$.  If $x \geq 0$, then set $\underline{\tau} = 0$ and go to \textbf{Step 1}.  Otherwise, set $\overline{\tau} = 0$ and go to \textbf{Step 2}.
	\item[Step 1] Let $( \widetilde{A}_t; t \geq \underline{\tau} )$ be the refracted \lev process (with refraction level $b$) that starts at the time $\underline{\tau}$ at the level $x$, and $\overline{\tau} := \inf \{ t > \underline{\tau}: \widetilde{A}_t < 0\}$.
	Set $V_t=\widetilde{A}_t$ for all $\underline{\tau} \leq t < \overline{\tau}$. Then go to \textbf{Step 2}.
	\item[Step 2] Let $( \widetilde{U}_t; t \geq \overline{\tau})$ be the \lev process reflected at the lower boundary $0$ that starts at the time $\overline{\tau}$ at the level $0$, and $\underline{\tau} := \inf \{ t > \overline{\tau}: \widetilde{U}_t > b\}$. Set $V_t=\widetilde{U}_t$ for all $\overline{\tau} \leq t < \underline{\tau}$ and $x = \widetilde{U}_{\underline{\tau}}$. 
		Then go to  \textbf{Step 1}.\end{description}
\begin{center}
	\line(1,0){300}
\end{center}

 In view of the construction above, $V$ admits a decomposition
\begin{align*}
	V_t = Y_t  + R_t- L_t, \quad t \geq 0,
\end{align*}
where both $R$ and $L$ are nondecreasing and right-continuous processes such that $R_{0-} = L_{0-} = 0$.  The former pushes the process upward when it attempts to go below $0$ and the latter pulls it downward  when it is above $b$ and can be written
\begin{align}
	L_t = \delta \int_0^t 1_{\{ V_s > b \}} \ud s, \quad t \geq 0. \label{L_def}
\end{align}
In the dual model with capital injection, $R_t$ models the cumulative amount of injected capital until $t$ while $L_t$ is that of dividends.
%
A difference from the spectrally negative case is that the process $R$ is continuous for $0 < t < \infty$ with a jump at $t = 0$ when $x < 0$.

Our derivation of the results of this section relies on the following remark on its connection with the drift-changed process $X$ and the reflected process $U$. Let us denote, for $a > 0$
 \begin{align}
 T_a^+:=\inf\{t>0:V_t>a\}\quad  
\textrm{and} \quad T_a^-:=\inf\{t>0:V_t<a\}. \label{def_T_hitting}
\end{align}

\begin{remark} \label{remark_connection_Y_U} Recall the hitting times $\tau_b^-$ of $X$  and $\eta_b^+$ of $U$ as in \eqref{first_passage_time} and \eqref{def_kappa_time}, respectively.  For any $x \in \R$, $\p_x$-a.s, we have $T_b^+ =\eta_b^+$ and $V_t = U_t$ on $0 \leq t \leq T_b^+$; similarly, we have $T_b^- =\tau_b^-$ and $V_t = X_t$  on $0 \leq t \leq T_b^-$.
\end{remark}

Using this and the strong Markov property, we can apply the same technique as in, for example, \cite{KL} and \cite{YP20015}. 
Fluctuation identities are first obtained for the case of bounded variation using the fact that $\mathbb{W}(0)$ is strictly positive (see \eqref{eq:Wqp0}). This is then extended to the case of unbounded variation by the following proposition and remark. Recall that, as in Definition 11 of \cite{KL},
 a sequence of processes $\{(\xi_s^{(n)})_{s \geq 0};n\geq1\}$ is \emph{strongly approximating} for a process $\xi$, if $\lim_{n \uparrow \infty}\sup_{0 \leq s \leq t} |\xi_s - \xi^{(n)}_{s}| =0$ for any $t > 0$ a.s.  In addition, for any spectrally positive \lev process $Y$, there exists a strongly approximating sequence $Y^{(n)}$ of spectrally positive \lev processes with paths of bounded variation (see page 210 of \cite{B}). The proof of the proposition below is essentially the same as in the spectrally negative case \cite{YP20015} and is hence omitted.

%

\begin{proposition} \label{prop_approximation}
	Suppose $Y$ is of unbounded variation and $(Y^{(n)}; n \geq 1)$ is a strongly approximating sequence for $Y$. In addition, define $V$ and  $V^{(n)}$ as the refracted-reflected processes associated with  $Y$ and $Y^{(n)}$, respectively. Then $V^{(n)}$ is a strongly approximating sequence of $V$. 
\end{proposition}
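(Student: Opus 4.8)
The plan is to build the approximation directly along the recursive construction of $V$, combining two continuity facts with an induction over the switching epochs. The first fact is that the Skorokhod reflection map at $0$, namely $y \mapsto \big(s\mapsto y_s + \sup_{0\le u\le s}(-y_u)\vee 0\big)$, is Lipschitz in the supremum norm on each compact interval, and more generally depends continuously, uniformly on compacts, on both the driving path and the starting level; hence $\sup_{s\le t}|Y_s - Y_s^{(n)}|\to 0$ gives $\sup_{s\le t}|U_s - U_s^{(n)}|\to 0$, and likewise for reflected pieces started from converging initial levels. The second fact is the spectrally positive analogue --- obtained via the duality $-A \leftrightarrow$ refracted spectrally negative process exploited in Section~\ref{section_preliminary} --- of the statement that the refracted L\'evy process depends continuously, in the strong-approximation sense, on its driving L\'evy process and on its starting level; this is the content of the discussion around Definition~11 of \cite{KL}. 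Granting these, what remains is the gluing at the switching times.

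Concretely, write $0=\sigma_0\le\sigma_1\le\sigma_2\le\cdots$ for the successive switching times in the construction of $V$ (the alternating $\overline\tau$'s and $\underline\tau$'s), and $\sigma_k^{(n)}$ for the corresponding times in the construction of $V^{(n)}$; on $[\sigma_k,\sigma_{k+1})$ the path $V$ agrees with either a refracted L\'evy process or a L\'evy process reflected at $0$, driven by the time-shift $(Y_{\sigma_k+s}-Y_{\sigma_k})_{s\ge 0}$ and started from $V_{\sigma_k}$. One proves by induction on $k$ that, $\p_x$-a.s., $\sigma_k^{(n)}\to\sigma_k$, $V_{\sigma_k}^{(n)}\to V_{\sigma_k}$, and $\sup_{s\le t}\big|V^{(n)}_{s\wedge\sigma_k^{(n)}}-V_{s\wedge\sigma_k}\big|\to 0$ for every $t>0$. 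In the induction step one feeds the convergence of the starting time and starting level into the appropriate stability fact above to obtain uniform convergence of the next building block on the relevant random interval, and then invokes continuity of the relevant first-passage functional --- $\overline\tau=\inf\{s>\underline\tau:\widetilde A_s<0\}$ for a refracted piece, $\underline\tau=\inf\{s>\overline\tau:\widetilde U_s>b\}$ for a reflected piece --- to propagate convergence of the switching time and of the post-switch value. Since $\p_x$-a.s.\ only finitely many switches occur in $[0,t]$ for each $t$ --- otherwise $\sigma_k\uparrow\sigma_\infty\le t$, and the left limit $V_{\sigma_\infty-}$ of the (well-defined) path $(V_s)_{0\le s<\sigma_\infty}$ would equal $\lim_k V_{\sigma_k}$, which is impossible as $V_{\sigma_k}=0$ along the $\overline\tau$-subsequence while $V_{\sigma_k}\ge b>0$ along the $\underline\tau$-subsequence --- a finite induction then yields $\sup_{s\le t}|V_s-V^{(n)}_s|\to 0$.

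The main obstacle is precisely the $\p_x$-a.s.\ continuity of these switching-time functionals under uniform perturbations of the path. Down-crossings of $0$ by a refracted piece are benign: a spectrally positive L\'evy process --- and each bounded-variation approximant, which has strictly negative drift --- is regular for $(-\infty,0)$, so $\widetilde A$ reaches $0$ continuously and immediately assumes strictly negative values, which makes $\overline\tau$ continuous at $\widetilde A$ for the supremum norm. Up-crossings of $b$ by a reflected piece are the delicate case, since $\widetilde U$ may cross $b$ either continuously (only if $\sigma>0$) or by a jump; one must rule out, $\p_x$-almost surely, the degenerate configurations at which $\omega\mapsto\inf\{s:\omega(s)>b\}$ fails to be continuous --- a jump of $\widetilde U$ landing exactly at $b$, or a continuous local maximum of $\widetilde U$ sitting exactly at level $b$ at the first passage --- which are null events (the overshoot over $b$ carries no atom at $0$ when the crossing is by a jump, and there is no continuous non-crossing touch of $b$ at the first passage), exactly as established in the spectrally negative treatment of \cite{YP20015}; the identities $T_b^+=\eta_b^+$ and $T_b^-=\tau_b^-$ of Remark~\ref{remark_connection_Y_U} let that argument be transcribed to the present setting. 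Finally, one notes a bookkeeping point: the same randomness drives $Y$ and all the $Y^{(n)}$, so the strong-approximation sequences for the successive refracted and reflected blocks may be chosen coherently and the countable alternation is simultaneously well defined for every $n$.
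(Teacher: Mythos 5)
Your argument is correct and follows the same strategy as the proof the paper itself omits (it defers to Proposition 2.1 of \cite{YP20015} for the spectrally negative case): an induction over the alternating reflection/refraction epochs, combining Lipschitz stability of the Skorokhod map, the strong-approximation stability of refracted processes from \cite{KL}, and the $\p_x$-a.s.\ continuity of the switching-time functionals, with the degenerate level-$b$ touches ruled out exactly as in \cite{YP20015}. You have correctly identified the only genuinely delicate points (the up-crossing of $b$ by the reflected piece and the finiteness of switches on compacts), so nothing further is needed.
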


\begin{remark} \label{remark_strongly_approximating} 
	 \label{remark_continuity_theorem_scale_function}Suppose $(Y^{(n)}; n \geq 1)$ is a strongly approximating sequence for $Y$ and $(\mathbb{W}^{(q)}_n; n \geq 1)$ and $(W^{(q)}_n; n \geq 1)$ are the corresponding scale functions of $-Y^{(n)}$ and  $-X^{(n)} := -( Y^{(n)}_t - \delta t; t \geq 0)$ respectively. 
	 
As is discussed in Lemma 20 of \cite{KL}, by the fact that the Laplace transform of the measure $\mathbb{W}^{(q)} (\diff x) = \mathbb{W}^{(q)} (0) \delta_0(\diff x) + \mathbb{W}^{(q) \prime} (x) \diff x$ (for any scale function $\mathbb{W}^{(q)}$) can be written in terms of the Laplace exponent, the continuity theorem implies
 $\mathbb{W}^{(q)}_n(x)$ (resp.\  $W^{(q)}_n(x)$) converges to $\mathbb{W}^{(q)}(x)$ (resp.\ $W^{(q)}(x)$) for every $x \in \R$. 
 
This also shows the following convergence of the derivatives: if $Y$ is of unbounded variation,  for $x > 0$, the right-hand derivatives $\mathbb{W}_{n,+}^{(q)\prime}(x)$ and $W_{n,+}^{(q)\prime}(x)$ converge to $\mathbb{W}^{(q) \prime}(x)$ and $W^{(q) \prime}(x)$, respectively.  To see this, by \eqref{upcrossing_time_reflected},
\begin{align} 
	\e \big(e^{-q\eta_b^+}\big)= 1 + q \int_0^b \mathbb{W}^{(q)}(y) \diff y  -q\frac{(\mathbb{W}^{(q)}(b))^2}{\mathbb{W}^{(q)\prime}_+(b)}, \quad b > 0, \label{ratio_scale_function_identity}
\end{align}
and the same identity holds when $\eta_b^+$ is replaced with $\eta_{b,n}^+$ (which is the upcrossing time \eqref{def_kappa_time} for the reflected process $U^{(n)}$ of $X^{(n)}$), $\mathbb{W}^{(q)}$ with $\mathbb{W}_{n}^{(q)}$, and $\mathbb{W}^{(q)\prime}_+$ with $\mathbb{W}_{n,+}^{(q)\prime}$.  Because $(U^{(n)}; n \geq 1)$ is also strongly approximating for $U$ (see the proof of Proposition 2.1 of \cite{YP20015}), we have $\eta_{b-\delta}^+ \leq \liminf_{n \uparrow \infty} \eta_{b,n}^+ \leq \limsup_{n \uparrow \infty} \eta_{b,n}^+ \leq \eta_{b+\delta}^+$ for any $0 < \delta < b$ a.s.  Because $Y$ is of unbounded variation, thanks to the regularity of the upper half-line (see page 232 of \cite{K}), we have $\eta_b^+ = \lim_{\delta \downarrow 0}\eta_{b+\delta}^+ = \lim_{\delta \downarrow 0}\eta_{b-\delta}^+$ a.s.  Hence, $\lim_{n \uparrow \infty} \eta_{b,n}^+ = \eta_{b}^+$ a.s. Now,  dominated convergence gives $\lim_{n \uparrow \infty}\e \big(e^{-q\eta_{b,n}^+}\big) = \e \big(e^{-q\eta_b^+}\big)$; therefore,
in view of \eqref{ratio_scale_function_identity} and the fact that $\lim_{n \uparrow \infty}\mathbb{W}_n^{(q)}(y) = \mathbb{W}^{(q)}(y)$, $y > 0$, we must have $\lim_{n \uparrow \infty}\mathbb{W}_{n,+}^{(q)\prime}(b) = \mathbb{W}_+^{(q)\prime}(b)$ (which equals $\mathbb{W}^{(q)\prime}(b)$ by the continuity of the derivative for the case of unbounded variation). The convergence of $W^{(q) \prime}_+$ also holds in the same way.

\end{remark}

\subsection{Simplifying formula} Before we obtain the fluctuation identities, we provide some formulae that  will be helpful in achieving concise expressions. The items (i) and (ii) below are borrowed from Theorem 3.1 of \cite{YP20015} (see also Theorem 2 in \cite{Ro2015} and Lemma 1 of \cite{Re2014}); items (i') and (ii') are obtained by taking right-hand derivatives (see Remark \ref{remark_differentiability} below).
\begin{lemma}\label{lemma_useful_identitysp} 
	Suppose that $Y$ is  of bounded variation. 
	For any $\tilde{p},\tilde{q} \geq0$, the following holds.  
	\begin{itemize}
		\item[(i)] For $\alpha < \beta \leq \gamma$, we have
	\begin{align*}
		\begin{split}
			\int_{0}^{\gamma-\beta}\int_{(y,\infty)}&W^{(\tilde{q})}(y-u+\beta-\alpha)\mathbb{W}^{(\tilde{p})}(\gamma-\beta-y)\Pi(\diff u)\diff y \\
			&=\mathbb{W}(0)^{-1} W^{(\tilde{q})}(\beta-\alpha) \mathbb{W}^{(\tilde{p})}(\gamma-\beta)-W^{(\tilde{q})}(\gamma-\alpha) \\
			&+\int_{\beta}^{\gamma}\mathbb{W}^{(\tilde{p})}(\gamma-y)\left((\tilde{q}-\tilde{p})W^{(\tilde{q})}(y-\alpha)-\delta W_+^{(\tilde{q})\prime}(y-\alpha)\right)\diff y. \end{split}
	\end{align*}
	\item[(i')] For $\alpha < \beta < \gamma$, we have
		\begin{align*}
			\int_{0}^{\gamma-\beta} \int_{(y,\infty)} &W^{(\tilde{q})}(y -u+\beta-\alpha)\mathbb{W}_+^{(\tilde{p})\prime}(\gamma-\beta-y)\Pi(\diff u)\diff y + \mathbb{W}(0) \int_{(\gamma-\beta,\infty)}W^{(\tilde{q})}(\gamma-u-\alpha)\Pi(\diff u)\notag\\
			&=\mathbb{W}(0)^{-1} W^{(\tilde{q})}(\beta-\alpha) \mathbb{W}_+^{(\tilde{p})\prime}(\gamma-\beta)-W_+^{(\tilde{q})\prime}(\gamma-\alpha) \notag\\
			&+\int_{\beta}^{\gamma}\mathbb{W}^{(\tilde{p})\prime}_+(\gamma-y)\left((\tilde{q}-\tilde{p})W^{(\tilde{q})}(y-\alpha)-\delta W^{(\tilde{q})\prime}_+(y-\alpha)\right)\diff y\notag\\
			&+\mathbb{W}(0)\Big((\tilde{q}-\tilde{p})W^{(\tilde{q})}(\gamma-\alpha)-\delta W_+^{(\tilde{q})\prime}(\gamma-\alpha) \Big),
		\end{align*}
		\item[(ii)] For $\alpha < \beta \leq \gamma$, we have
\begin{align*} 
\begin{split}
\int_{0}^{\gamma-\beta} \int_{(y, \infty)} &Z^{(\tilde{q})}(y-u+\beta-\alpha)\mathbb{W}^{(\tilde{p})}(\gamma-\beta-y)\Pi(\ud u)\ud y \\
&= \mathbb{W}(0)^{-1} Z^{(\tilde{q})}(\beta-\alpha) \mathbb{W}^{(\tilde{p})}(\gamma-\beta)-Z^{(\tilde{q})}(\gamma-\alpha)-(\tilde{p}-\tilde{q})\overline{\mathbb{W}}^{(\tilde{p})}(\gamma-\beta) \\
&+\tilde{q}\int_{\beta}^{\gamma}\mathbb{W}^{(\tilde{p})}(\gamma-y)\left((\tilde{q}-\tilde{p})\overline{W}^{(\tilde{q})}(y-\alpha)-\delta W^{(\tilde{q})}(y-\alpha)\right)\ud y. \end{split}
\end{align*}
		\item[(ii')] For $\alpha < \beta < \gamma$, we have
\begin{align*}
\int_{0}^{\gamma-\beta} \int_{(y,\infty)} &Z^{(\tilde{q})}(y - u+\beta-\alpha)\mathbb{W}_+^{(\tilde{p})\prime}(\gamma-\beta-y)\Pi(\diff u)\diff y + \mathbb{W}(0) \int_{(\gamma-\beta,\infty)} Z^{(\tilde{q})}(\gamma-u-\alpha)\Pi(\diff u)\\
&=\mathbb{W}(0)^{-1} Z^{(\tilde{q})}(\beta-\alpha) \mathbb{W}_+^{(\tilde{p})\prime}(\gamma-\beta)-\tilde{q}( \delta \mathbb{W}(0) +1)W^{(\tilde{q})}(\gamma-\alpha) \\ &+(\tilde{q}-\tilde{p})\mathbb{W}^{(\tilde{p})}(\gamma-\beta) + \tilde{q} \mathbb{W}(0) (\tilde{q}-\tilde{p}) \overline{W}^{(\tilde{q})}(\gamma-\alpha) \\
&+\tilde{q}\int_{\beta}^{\gamma}\mathbb{W}_+^{(\tilde{p})\prime}(\gamma-y)\left((\tilde{q}-\tilde{p})\overline{W}^{(\tilde{q})}(y-\alpha)-\delta W^{(\tilde{q})}(y-\alpha)\right)\diff y.\end{align*}
		\end{itemize}
	\end{lemma}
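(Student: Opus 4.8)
The plan is to prove item (i) directly by a Laplace-transform argument, taking advantage of the fact that $Y$ (hence $X$) is of bounded variation, so that all the scale functions and their right-hand derivatives behave well under the transform. Since the statement is an identity between functions of the three real parameters $\alpha < \beta \leq \gamma$, and everything depends only on the differences $\beta - \alpha$ and $\gamma - \beta$, I would first set $u := \beta - \alpha \geq 0$ and $v := \gamma - \beta \geq 0$, rewrite both sides as functions of $(u,v)$, and then compute the double Laplace transform in $(u,v)$ of each side. The left-hand side is a convolution in $v$ of the function $y \mapsto \int_{(y,\infty)} W^{(\tilde q)}(y + u - \cdot)\,\Pi(\cdot)$ with $\mathbb{W}^{(\tilde p)}$ (and a shift by $u$ in the first factor), so its transform factors into a product involving $1/(\psi_X(\theta) - \tilde q)$, $1/(\psi_Y(\rho) - \tilde p)$, and the Laplace transform of the tail integral against $\Pi$; the latter is controlled by the Lévy--Khintchine representation \eqref{lk} together with the bounded-variation form $\psi_Y(\theta) = c_Y\theta + \int_{(0,\infty)}(e^{-\theta x}-1)\Pi(\mathrm{d}x)$ and the relation $\psi_X = \psi_Y + \delta\theta$. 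On the right-hand side, one transforms term by term using \eqref{scale_function_laplace}, the value $\mathbb{W}(0) = c_Y^{-1}$ from \eqref{eq:Wqp0}, the transform of $\overline{\mathbb{W}}^{(q)}$ and $\mathbb{Z}^{(q)}$, and the identity $\int_0^\infty e^{-\theta x} W_+^{(q)\prime}(x)\,\mathrm{d}x = \theta/(\psi_X(\theta)-q) - W^{(q)}(0)$ for the derivative term; the residual integral $\int_\beta^\gamma \cdots$ is again a convolution and transforms into a product.

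Carrying this out, I expect both double transforms to collapse, after using $\psi_X(\theta) - \tilde q - (\psi_X(\theta) - \tilde p) = \tilde p - \tilde q$ and $\psi_X(\theta) - \psi_Y(\theta) = \delta\theta$, to the same rational expression in $(1/(\psi_X(\theta)-\tilde q), 1/(\psi_Y(\rho)-\tilde p))$; invoking uniqueness of Laplace transforms (both sides are locally bounded, measurable, and of at most exponential growth by \eqref{W_q_limit}) then yields (i) for a.e.\ $(u,v)$, and hence for all $u < v$ with $u \leq v$ by right-continuity of all the functions involved. Alternatively — and this may be cleaner — I would instead derive (i) from the already-available spectrally negative identity: item (i) is literally Theorem 3.1 of \cite{YP20015} transcribed to the spectrally positive setting via the duality $-X, -Y$ with refraction level reflected, exactly as explained in Section \ref{section_preliminary}; so one option is simply to cite \cite{YP20015} and perform the change of variables, checking that the constant $\mathbb{W}(0)^{-1} = c_Y$ and the sign conventions on $\delta$ and $\tilde p - \tilde q$ match.

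For (i'), (ii), and (ii'), the approach is the same in spirit but now the derivative identities require care. Item (ii) follows from (i) by noting $Z^{(\tilde q)}(x) = 1 + \tilde q\,\overline{W}^{(\tilde q)}(x)$ and integrating the identity of (i) — with $\tilde q$ as a free parameter under $\int_0^{\cdot}$ in the right place — or again by a direct Laplace-transform computation using $\int_0^\infty e^{-\theta x} Z^{(q)}(x)\,\mathrm{d}x = \psi_X(\theta)/(\theta(\psi_X(\theta)-q))$. Items (i') and (ii') are obtained by differentiating (i) and (ii) in $\gamma$ (equivalently in $v = \gamma - \beta$), being careful to use right-hand derivatives: differentiating the convolution $\int_0^{\gamma - \beta} f(y)\,\mathbb{W}^{(\tilde p)}(\gamma - \beta - y)\,\mathrm{d}y$ in $\gamma$ produces a boundary term $f(\gamma - \beta)\,\mathbb{W}^{(\tilde p)}(0)$ — here $f(\gamma - \beta) = \int_{(\gamma - \beta,\infty)} W^{(\tilde q)}(\gamma - u - \alpha)\,\Pi(\mathrm{d}u)$, which explains the second term $\mathbb{W}(0)\int_{(\gamma-\beta,\infty)}W^{(\tilde q)}(\gamma - u - \alpha)\,\Pi(\mathrm{d}u)$ on the left of (i') — plus the integral of $\mathbb{W}_+^{(\tilde p)\prime}$; on the right-hand side, $\frac{\mathrm{d}}{\mathrm{d}\gamma}\int_\beta^\gamma \mathbb{W}^{(\tilde p)}(\gamma - y)g(y)\,\mathrm{d}y = \mathbb{W}^{(\tilde p)}(0)g(\gamma) + \int_\beta^\gamma \mathbb{W}_+^{(\tilde p)\prime}(\gamma - y)g(y)\,\mathrm{d}y$, which is the source of the last line in (i') and (ii'). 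The one subtlety to flag explicitly — as hinted in the "Remark \ref{remark_differentiability} below" referenced in the text — is the legitimacy of differentiating under the integral sign and the identification of one-sided derivatives when the Lévy measure has atoms and $Y$ is of bounded variation; I would handle this by justifying term-by-term right-differentiation using that $\mathbb{W}^{(\tilde p)\prime}_+$ exists everywhere on $(0,\infty)$ and is locally bounded (Remark \ref{remark_smoothness}) and that $\Pi$ integrates $1 \wedge x$ near $0$, so dominated convergence applies to the difference quotients.

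The main obstacle I anticipate is bookkeeping rather than conceptual: matching the several constants (all multiples of $\mathbb{W}(0) = c_Y^{-1}$) and the $(\tilde q - \tilde p)$ versus $(\tilde p - \tilde q)$ signs across the four items, and handling the boundary terms in (i') and (ii') rigorously in the atomic-$\Pi$ bounded-variation case where $\mathbb{W}^{(\tilde p)}$ fails to be $C^1$ at the atoms. Everything else reduces to routine Laplace inversion or to citing \cite{YP20015} with the duality dictionary of Section \ref{section_preliminary}.
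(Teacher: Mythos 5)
Your overall route coincides with the paper's: items (i) and (ii) are simply imported from Theorem 3.1 of \cite{YP20015} (your second, ``cleaner'' option is exactly what the authors do --- and since $\mathbb{W}^{(q)}$ and $W^{(q)}$ are by definition the scale functions of the spectrally negative processes $-Y$ and $-X$ with $\psi_X=\psi_Y+\delta\,\cdot$, the identity is literally the cited one and no change of variables is needed), while (i') and (ii') are obtained by taking right-hand derivatives in $\gamma$, with the boundary terms you identify ($\mathbb{W}(0)\int_{(\gamma-\beta,\infty)}W^{(\tilde q)}(\gamma-u-\alpha)\Pi(\diff u)$ on the left, $\mathbb{W}(0)\,g(\gamma)$ from the convolution on the right) being exactly the extra terms in the primed identities. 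Your alternative double-Laplace-transform derivation of (i) is plausible in outline but only asserted, not carried out; since (i) is available by citation this is not where the real work of the lemma lies.

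The one genuine gap is in your justification for differentiating under the integral sign. You propose to dominate the difference quotients $\bigl[\mathbb{W}^{(\tilde p)}(\gamma+\epsilon-\beta-y)-\mathbb{W}^{(\tilde p)}(\gamma-\beta-y)\bigr]/\epsilon$ using local boundedness of $\mathbb{W}^{(\tilde p)\prime}_+$ on $(0,\infty)$ together with $\int(1\wedge x)\Pi(\diff x)<\infty$. But $y$ ranges over all of $(0,\gamma-\beta)$, so the argument $\gamma-\beta-y$ approaches $0$, and by \eqref{W_zero_derivative} one has $\mathbb{W}^{(\tilde p)\prime}_+(0+)=\infty$ whenever $\sigma=0$ and $\Pi(0,\infty)=\infty$ --- a situation fully compatible with the standing bounded-variation hypothesis. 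Hence there is no integrable dominating function on the whole range and a single application of dominated convergence fails; the obstruction is not atoms of $\Pi$ or failure of $C^1$-smoothness, as you suggest, but the blow-up of the derivative of the scale function at the origin. The paper's Remark \ref{remark_differentiability} resolves this by splitting the integral into $K_1(\delta,\epsilon)$ over $y\in(0,\gamma-\beta-\delta)$, where $\sup_{\delta<z<\gamma-\beta+\bar\epsilon}\mathbb{W}^{(\tilde p)\prime}_+(z)<\infty$ and dominated convergence does apply, and a remainder $K_2(\delta,\epsilon)$ over $y\in(\gamma-\beta-\delta,\gamma-\beta)$, which is shown to vanish as $\epsilon\downarrow0$ then $\delta\downarrow0$ via Fubini, the bound $W^{(\tilde q)}(y-u+\beta-\alpha)\le W^{(\tilde q)}(\beta-\alpha)$, and the finiteness of $\Pi(\gamma-\beta-\delta,\gamma-\alpha)$; a $\liminf$/$\limsup$ sandwich then recovers the limit of $K_1(0,\epsilon)$. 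Your plan needs this (or an equivalent) localization step near the upper endpoint to go through; the rest is sound.
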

\begin{remark} \label{remark_differentiability} 
		In deriving (i') from (i) in Lemma \ref{lemma_useful_identitysp},  the right-hand derivative on the left-hand side can be interchanged over integrations by the following arguments.
For $\epsilon > 0$ and $0 < \delta < \gamma- \beta$,  define
\begin{align*}
		K_1(\delta, \epsilon)&:=\int_{0}^{\gamma-\beta-\delta}\int_{(y,\infty)}W^{(\tilde{q})}(y-u+\beta-\alpha)  \frac {\mathbb{W}^{(\tilde{p})}(\gamma+\epsilon-\beta-y) - \mathbb{W}^{(\tilde{p})}(\gamma-\beta-y) } \epsilon  \Pi(\diff u)\diff y, \\
		K_2(\delta, \epsilon)&:= \int_{\gamma-\beta-\delta}^{\gamma-\beta}\int_{(y,\infty)}W^{(\tilde{q})}(y-u+\beta-\alpha)  \frac {\mathbb{W}^{(\tilde{p})}(\gamma+\epsilon-\beta-y) - \mathbb{W}^{(\tilde{p})}(\gamma-\beta-y) } \epsilon  \Pi(\diff u)\diff y. 
	\end{align*}
Note that for all $0 < \delta < \gamma- \beta$, we have 
\begin{align} \label{K_sum}
K_1(0, \epsilon) = K_1(\delta, \epsilon) + K_2(\delta, \epsilon).
\end{align}  
Here, we show how $\lim_{\epsilon \downarrow 0}K_1(0, \epsilon)$ can be computed.
For the first term, for any $0 < \epsilon < \bar{\epsilon}$ for fixed $\bar{\epsilon} > 0$ and $0 < y < \gamma-\beta - \delta$, we have a bound: $| \mathbb{W}^{(\tilde{p})}(\gamma+\epsilon-\beta-y) - \mathbb{W}^{(\tilde{p})}(\gamma-\beta-y) | / \epsilon \leq \sup_{\delta < z < \gamma - \beta + \bar{\epsilon}} \mathbb{W}_+^{(\tilde{p}) \prime}(z) < \infty$ (because $\mathbb{W}_+^{(\tilde{p}) \prime}(z)$ is finite if $z > 0$, which is clear from \eqref{capital_injection_identity_SP} [see  also identity (8.26) in \cite{K}]), and 
	\begin{multline*}
\int_{0}^{\gamma-\beta-\delta}\int_{(y,\infty)}W^{(\tilde{q})}(y-u+\beta-\alpha) \sup_{\delta < z < \gamma - \beta + \bar{\epsilon}} \mathbb{W}_+^{(\tilde{p}) \prime}(z)  \Pi(\diff u)\diff y \\
		 \leq W^{(\tilde{q})}(\beta-\alpha) \sup_{\delta < z < \gamma - \beta + \bar{\epsilon}} \mathbb{W}_+^{(\tilde{p}) \prime}(z)  \int_{0}^{\gamma-\beta-\delta}\int_{(y,y + \beta - \alpha]} \Pi(\diff u)\diff y, 
	\end{multline*}
	which is finite because, for sufficiently small $c > 0$, by the assumption that $Y$ is of bounded variation,
	\begin{align*}	
\int_{0}^{\gamma-\beta}\int_{(y,y + \beta - \alpha]} 1_{\{u < c \}}\Pi(\diff u)\diff y \leq \int_{0}^c \int_{(y,c)} \Pi(\diff u)\diff y = \int_{(0,c)} u \Pi(\diff u) < \infty.
\end{align*}
Therefore, by dominated convergence, the limit as $\epsilon \downarrow 0$ can be interchanged over the integral and hence,
	\begin{align*}
		K:= \lim_{\delta \downarrow 0}\lim_{\epsilon \downarrow 0} K_1(\delta, \epsilon) &=\lim_{\delta \downarrow 0}\int_{0}^{\gamma-\beta-\delta}\int_{(y,\infty)}W^{(\tilde{q})}(y-u+\beta-\alpha)  \mathbb{W}_+^{(\tilde{p}) \prime}(\gamma-\beta-y) \Pi(\diff u)\diff y\\
		&=\int_{0}^{\gamma-\beta}\int_{(y,\infty)}W^{(\tilde{q})}(y-u+\beta-\alpha)  \mathbb{W}_+^{(\tilde{p}) \prime}(\gamma-\beta-y) \Pi(\diff u)\diff y.
	\end{align*}
On the other hand, by Fubini's theorem,
	\begin{align*}
	0 \leq K_2(\delta,\epsilon) & \leq W^{(\tilde{q})}(\beta-\alpha)\Pi(\gamma-\beta-\delta,\gamma-\alpha) \frac 1 \epsilon \int_{\gamma-\beta-\delta}^{\gamma-\beta} \int_0^\epsilon \mathbb{W}^{(\tilde{p})\prime}_+(\gamma+z-\beta-y) \diff z \diff y\\
	&= W^{(\tilde{q})}(\beta-\alpha)\Pi(\gamma-\beta-\delta,\gamma-\alpha)\frac{1}{\epsilon}\int_0^{\epsilon}( \mathbb{W}^{(\tilde{p})}(\delta+z) - \mathbb{W}^{(\tilde{p})}(z)) \diff z\\
	&\leq W^{(\tilde{q})}(\beta-\alpha)\Pi(\gamma-\beta-\delta,\gamma-\alpha)\sup_{0\leq z\leq \epsilon}|\mathbb{W}^{(\tilde{p})}(\delta+z) - \mathbb{W}^{(\tilde{p})}(z)|.
	\end{align*}
	Hence, noting that $\lim_{\delta \downarrow 0} \lim_{\epsilon \downarrow 0}K_2(\delta,\epsilon)=0$ and by \eqref{K_sum},
	\begin{multline*}
K  = \lim_{\delta \downarrow 0}\liminf_{\epsilon \downarrow 0} (K_1(\delta, \epsilon) + K_2 (\delta, \epsilon)) = \liminf_{\epsilon\downarrow 0} K_1(0, \epsilon)  \leq  \limsup_{\epsilon\downarrow 0} K_1(0, \epsilon) \\ = 
	\lim_{\delta \downarrow 0}\limsup_{\epsilon\downarrow 0} (K_1(\delta, \epsilon) + K_2 (\delta, \epsilon)) = K,
	\end{multline*}
implying $K_1(0, \epsilon) \xrightarrow{\epsilon \downarrow 0} K$, as desired.
The same technique can be used to derive (ii') from (ii).
\end{remark}


Fix $b > 0$. We define, for $l \in \R$ and $q \geq 0$, 
  \begin{align}
  \begin{split}
  r^{(q)}(l;a)&:=W^{(q)}(l)+\delta\int_{a-b}^{l}\mathbb{W}^{(q)}(l-z)W^{(q)\prime}_+(z) \diff z, \quad a > b, \\
\hat{r}^{(q)}(l) &:= e^{-\Phi(q)(b-l)}+\delta\Phi(q)\int_{b-l}^be^{-\Phi(q)u}\mathbb{W}^{(q)}(u-b+l)\diff u \\
&=e^{-\Phi(q)(b-l)}+\delta\Phi(q) e^{- \Phi(q) (b-l)}\int_{0}^l e^{-\Phi(q)u}\mathbb{W}^{(q)}(u)\diff u.
\end{split}
\label{tilde_r_q}
\end{align}
In addition, define, for any $p\geq0$,  $q\geq-p$, and $a > b$, 
\begin{align} \label{cap_R_p_q}
\begin{split}
\mathcal{R}^{(p,q)}(a)&:=(1+\delta \mathbb{W}(0))W^{(q)}(a)+\delta\int_0^b\mathbb{W}^{(p+q)\prime}_+(u)W^{(q)}(a-u)\diff u \\
&+\frac{p}{q}\mathbb{W}^{(p+q)}(b)Z^{(q)}(a-b)+p\int_0^b\mathbb{W}^{(p+q)}(u)W^{(q)}(a-u)\diff u,
\end{split}
\end{align}
where in particular
\begin{align*}
\mathcal{R}^{(0,q)}(a)&=(1+\delta \mathbb{W}(0))W^{(q)}(a)+\delta\int_0^b\mathbb{W}_+^{(q)\prime}(u)W^{(q)}(a-u)\diff u \\
&= r^{(q)}(a;a) + \delta W^{(q)}(a-b) \mathbb{W}^{(q)} (b).
\end{align*}

For the case of bounded variation,
by \eqref{overshoot_reflected} and Lemma \ref{lemma_useful_identitysp} (ii) and (ii'), we get, for  $q \geq 0$, $p \geq -q$, and $0 \leq x \leq b < a$, 
\begin{align} \label{Z_overshoot_p_q}
\begin{split}
	\e_x&\bigg(e^{-(p+q)\eta_b^+}Z^{(q)}(a-U_{\eta_b^+})\bigg)\\
	&=\int_0^{b}\int_{(b-y,\infty)} Z^{(q)}(a-y-h)\bigg\{\frac{\mathbb{W}^{(p+q)}(b-x)}{\mathbb{W}_+^{(p+q)\prime}(b)}\mathbb{W}^{(p+q)\prime}_+(y)-\mathbb{W}^{(p+q)}(y-x)\bigg\}\Pi(\diff h)\diff y \\
&\qquad + \mathbb{W}(0)\int_{(b,\infty)} Z^{(q)}(a-h)\frac{\mathbb{W}^{(p+q)}(b-x)}{\mathbb{W}_+^{(p+q)\prime}(b)}\Pi(\diff h)\\
	&=\int_0^{b}\int_{(z,\infty)} Z^{(q)}(a-b+z-h)\bigg\{\frac{\mathbb{W}^{(p+q)}(b-x)}{\mathbb{W}_+^{(p+q)\prime}(b)}\mathbb{W}^{(p+q)\prime}_+(b-z)-\mathbb{W}^{(p+q)}(b-z-x)\bigg\}\Pi(\diff h)\diff z \\
&\qquad + \mathbb{W}(0)\int_{(b,\infty)} Z^{(q)}(a-h)\frac{\mathbb{W}^{(p+q)}(b-x)}{\mathbb{W}_+^{(p+q)\prime}(b)}\Pi(\diff h)\\
&=Z^{(q)}(a-x)+p\overline{\mathbb{W}}^{(p+q)}(b-x)+q\int_x^b\mathbb{W}^{(p+q)}(u-x)\left(p\overline{W}^{(q)}(a-u)+\delta W^{(q)}(a-u)\right)\diff u \\
&\qquad -q\frac{\mathbb{W}^{(p+q)}(b-x)}{\mathbb{W}_+^{(p+q)\prime}(b)}\mathcal{R}^{(p,q)}(a),
\end{split}
\end{align}
where the last equality holds by setting $\tilde{p} = p+q$ and $\tilde{q} = q$ in Lemma \ref{lemma_useful_identitysp} (ii') with $\alpha=0$, $\beta = a-b$ and $\gamma = a$ and, in Lemma \ref{lemma_useful_identitysp} (ii), with $\alpha=0$, $\beta = a-b$ and $\gamma = a-x$.
Its special case when $p=0$ reduces to
\begin{align*}
	\e_x\bigg(e^{-q\eta_b^+}Z^{(q)}(a-U_{\eta_b^+})\bigg)&=Z^{(q)}(a-x)+q \delta \int_x^b\mathbb{W}^{(q)}(u-x) W^{(q)}(a-u) \diff u-q\frac{\mathbb{W}^{(q)}(b-x)}{\mathbb{W}_+^{(q)\prime}(b)}\mathcal{R}^{(0,q)}(a).\notag
\end{align*} 

Taking a right-hand derivative of  \eqref{cap_R_p_q}, we have for $a > b$ 
\begin{align}
\mathcal{R}_+^{(p,q)\prime}(a)&=(1+\delta \mathbb{W}(0))W_+^{(q) \prime}(a)+\delta\int_0^b\mathbb{W}^{(p+q)\prime}_+(u)W^{(q)\prime}_+(a-u)\diff u\notag\\
&+p \Big( \mathbb{W}^{(p+q)}(b)W^{(q)}(a-b)+\int_0^b\mathbb{W}^{(p+q)}(y)W^{(q)\prime}_+(a-y)\diff y \Big), \notag
\end{align} 
where we used dominated convergence to interchange the derivative over the integral. Indeed, for all $0 < \epsilon < \bar{\epsilon}$ with fixed $\bar{\epsilon} > 0$,
\begin{align*}
\int_0^b\mathbb{W}^{(p+q)\prime}_+(u) \Big| \frac {W^{(q)}(a+\epsilon-u) - W^{(q)}(a -u) } {\epsilon} \Big|\diff u \leq \int_0^b\mathbb{W}^{(p+q)\prime}_+(u) \sup_{y \in [a-b, a+\bar{\epsilon}]}\Big|  W^{(q) \prime}_+(y)  \Big|\diff u \\
=  (\mathbb{W}^{(p+q)}(b)  - \mathbb{W}^{(p+q)}(0) ) \sup_{y \in [a-b, a+\bar{\epsilon}]}\Big| W^{(q)\prime}_+(y)  \Big|,
\end{align*} 
where supremum term is finite because $W^{(q)}_+(y)$ is finite if $y > 0$, as discussed in Remark \ref{remark_differentiability}.

On the other hand if $r^{(q) \prime}_{+}(l;a)$ is the right-hand derivative of \eqref{tilde_r_q} with respect to $l$, then for $a > b$ and $l \in \R$ 
  \begin{align*}r^{(q) \prime}_+(l;a)&=(1+\delta \mathbb{W}(0)) W^{(q)\prime}_+(l) + \delta\int_{a-b}^{l}\mathbb{W}^{(q)\prime}_+(l-z)W^{(q)\prime}_+(z) \diff z \\
 &= (1+\delta \mathbb{W}(0)) W^{(q)\prime}_+(l) + \delta\int_0^{l-a+b}\mathbb{W}^{(q)\prime}_+(u)W^{(q)\prime}_+(l-u) \diff u,
\end{align*}
where again the derivative can go into the integral by dominated convergence.
Hence, for $a > b$, 
\begin{align}r^{(q) \prime}_+(a;a)
 = \mathcal{R}_+^{(0,q)\prime}(a)= (1+\delta \mathbb{W}(0)) W^{(q)\prime}_+(a) + \delta\int_0^{b}\mathbb{W}^{(q)\prime}_+(u)W^{(q)\prime}_+(a-u) \diff u. \label{r_prime_a_a}
\end{align}
Using these and similarly to \eqref{Z_overshoot_p_q}, we can write, for $0 \leq x \leq b < a$, 
$q \geq 0$, and $p \geq -q$,
\begin{align} \label{W_overshoot_p_q}
\begin{split}
&\e_x\Big(e^{-(p+q)\eta_b^+}W^{(q)}(a-U_{\eta_b^+})\Big)\\ &=\int_0^{b}\int_{(b-y,\infty)} W^{(q)}(a-y-h)\bigg\{\frac{\mathbb{W}^{(p+q)}(b-x)}{\mathbb{W}_+^{(p+q)\prime}(b)}\mathbb{W}_+^{(p+q)\prime}(y)-\mathbb{W}^{(p+q)}(y-x)\bigg\}\Pi(\diff h)\diff y\\
&\qquad +\mathbb{W}(0)\int_{(b,\infty)} W^{(q)}(a-h)\frac{\mathbb{W}^{(p+q)}(b-x)}{\mathbb{W}_+^{(p+q)\prime}(b)} \Pi(\diff h)\\
&=W^{(q)}(a-x)+\int_x^b\mathbb{W}^{(p+q)}(u-x)\left(pW^{(q)}(a-u)+\delta W^{(q)\prime}_+(a-u)\right)\diff u -\frac{\mathbb{W}^{(p+q)}(b-x)}{\mathbb{W}_+^{(p+q)\prime}(b)} \mathcal{R}^{(p,q)\prime}_+(a),
\end{split}
\end{align}
and, in particular when $p=0$,
\begin{align} \label{W_overshoot_exp_special}
\begin{split}
\e_x \Big(e^{-q\eta_b^+}W^{(q)}(a-U_{\eta_b^+})\Big)&=r^{(q)}(a-x;a)-\frac{\mathbb{W}^{(q)}(b-x)}{\mathbb{W}_+^{(q)\prime}(b)} r^{(q)\prime}_+(a;a).
\end{split}
\end{align}

\subsection{Computation of resolvents} 
We now compute the resolvent measure given by 
$\e_x\big(\int_0^{T_a^+}e^{-qt}1_{\{ V_t \in B \}}\diff t\big)$
for any $0 < b < a$ and $0 \leq x \leq a$, and as its corollary $\e_x\big(\int_0^{\infty}e^{-qt}1_{\{ V_t \in B \}}\diff t\big)$ for $x \geq 0$. Due to the reflection at the lower barrier $0$, it is clear that, for any $x < 0$, 
$\e_x\big(\int_0^{T_a^+}e^{-qt}1_{\{ V_t \in B \}}\diff t\big)=\e \big(\int_0^{T_a^+}e^{-qt}1_{\{ V_t \in B \}}\diff t\big)$.
\begin{theorem}[Resolvent]\label{resolsp} For any $q\geq0$,  $0 < b < a$, and $0 \leq x \leq a$, and Borel set $B$ on $[0,a]$,
\begin{align*} 
\begin{split}
\e_x\bigg(\int_0^{T_a^+}e^{-qt}1_{\{ V_t \in B \}}\diff t\bigg)
&=-\Gamma_b^{(q)}(b-x;B) +  \Gamma^{(q)\prime}_b(b;B)   \frac {r^{(q)}(a-x;a)} {r^{(q)\prime}_+(a;a)}  \\ &+ \int_{b}^a 1_{\{ u \in B\}} \Big(  r^{(q) \prime}_+(u;u) \frac {r^{(q)}(a-x;a) } {r^{(q)\prime}_+(a;a)}   - r^{(q)}(u-x;u) \Big)\diff u.
\end{split}
\end{align*}
\end{theorem}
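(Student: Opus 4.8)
The plan is to condition, via the strong Markov property, on the first passage of $V$ across the refraction level $b$, so that everything reduces to the known resolvents \eqref{rsr} and \eqref{resolvent_density} of the reflected process $U$ and the drift-changed process $X$, and then to solve for the one constant that remains undetermined. Abbreviate $g(x):=\e_x\bigl(\int_0^{T_a^+}e^{-qt}1_{\{V_t\in B\}}\diff t\bigr)$; by the reflection at $0$ the case $x<0$ collapses to $x=0$, so it suffices to treat $0\le x\le a$. I would first establish the identity when $Y$ is of bounded variation, where $\mathbb{W}(0)=c_Y^{-1}>0$ by \eqref{eq:Wqp0}, and then obtain the general case by approximation.

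\emph{The two basic equations.} For $b\le x\le a$, Remark \ref{remark_connection_Y_U} gives $V=X$ on $[0,T_b^-]$ with $T_b^-=\tau_b^-$, and since $X$ is spectrally positive, $X_{\tau_b^-}=b$ on $\{\tau_b^-<\infty\}$; splitting at $\tau_b^-\wedge\tau_a^+$ and using \eqref{resolvent_density} together with the first identity of \eqref{laplace_in_terms_of_z} gives
\[
g(x)=\frac{W^{(q)}(a-x)}{W^{(q)}(a-b)}\bigl(\overline{\Gamma}^{(q)}_a(a-b;B)+g(b)\bigr)-\overline{\Gamma}^{(q)}_a(a-x;B),
\]
which also holds (trivially, with $g\equiv0$) for $x>a$. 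For $0\le x\le b$, Remark \ref{remark_connection_Y_U} gives $V=U$ on $[0,T_b^+]$ with $T_b^+=\eta_b^+$ and $V_{\eta_b^+}=U_{\eta_b^+}\ge b$, so \eqref{rsr} and the strong Markov property give $g(x)=\frac{\mathbb{W}^{(q)}(b-x)}{\mathbb{W}_+^{(q)\prime}(b)}\Gamma_b^{(q)\prime}(b;B)-\Gamma_b^{(q)}(b-x;B)+\e_x\bigl(e^{-q\eta_b^+}g(U_{\eta_b^+})\bigr)$.

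\emph{Solving for the constant.} I would then insert the first display into $g(U_{\eta_b^+})$ (legitimate for every value $U_{\eta_b^+}\ge b$, the case $U_{\eta_b^+}>a$ giving $0$), write $\overline{\Gamma}^{(q)}_a(a-U_{\eta_b^+};B)=\int_b^a 1_{\{w\in B\}}W^{(q)}(w-U_{\eta_b^+})\diff w$ and use Tonelli, and then apply \eqref{W_overshoot_exp_special} with $a$, and with each $w\in(b,a]$, in place of $a$; here one uses that $r^{(q)}(l;a)=W^{(q)}(l)$ whenever $l\le a-b$ (the correction integral in \eqref{tilde_r_q} then being over a null set). This expresses $\e_x\bigl(e^{-q\eta_b^+}g(U_{\eta_b^+})\bigr)$ in terms of the single constant $C:=\bigl(\overline{\Gamma}^{(q)}_a(a-b;B)+g(b)\bigr)/W^{(q)}(a-b)$ and of $r^{(q)}$, $r^{(q)\prime}_+$ and $\mathbb{W}^{(q)}(b-x)/\mathbb{W}_+^{(q)\prime}(b)$. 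Substituting back into the $[0,b]$ equation and setting $x=b$, the terms carrying $g(b)$ on the two sides cancel, and since $\mathbb{W}^{(q)}(0)=c_Y^{-1}\ne 0$ the surviving relation yields $C=\bigl(\Gamma_b^{(q)\prime}(b;B)+\int_b^a 1_{\{w\in B\}}r^{(q)\prime}_+(w;w)\diff w\bigr)/r^{(q)\prime}_+(a;a)$. Re-inserting this value, the $\mathbb{W}^{(q)}(b-x)/\mathbb{W}_+^{(q)\prime}(b)$-terms cancel and, using $r^{(q)}(a-x;a)=W^{(q)}(a-x)$, $r^{(q)}(u-x;u)=W^{(q)}(u-x)$ and $\Gamma_b^{(q)}(b-x;B)=0$ for $x\ge b$, one obtains exactly the asserted formula on all of $[0,a]$.

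\emph{Unbounded variation, and the main obstacle.} Since both sides are finite measures in $B$, it suffices to take $B$ an interval. Choosing a strongly approximating sequence $Y^{(n)}$ of bounded-variation processes, Proposition \ref{prop_approximation} shows that the associated refracted-reflected processes $V^{(n)}$ strongly approximate $V$, whence $T_a^{(n),+}\to T_a^+$ and $1_{\{V^{(n)}_t\in B\}}\to 1_{\{V_t\in B\}}$ for a.e.\ $t\le T_a^+$ (using the regularity of $(a,\infty)$, exactly as in Remark \ref{remark_strongly_approximating}), and bounded convergence gives convergence of the left-hand sides; on the right-hand side, Remark \ref{remark_strongly_approximating} gives convergence of $\mathbb{W}^{(q)}_n$, $W^{(q)}_n$ and of their right-hand derivatives on $(0,\infty)$, hence of $r^{(q)}_n$ and $r^{(q)\prime}_{n,+}$, and dominated convergence over $[b,a]$ gives convergence of the right-hand sides; a monotone-class argument then passes from intervals to all Borel $B$. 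The main obstacle is the determination of $g(b)$ (equivalently $C$): in the bounded-variation case it rests entirely on $\mathbb{W}(0)>0$, and the vanishing of $\mathbb{W}(0)$ in the unbounded-variation case is precisely why the approximation cannot be bypassed — a subsidiary technical nuisance being the careful handling of left- and right-hand derivatives of the scale functions and of the reduction $r^{(q)}(l;a)=W^{(q)}(l)$ for $l\le a-b$.
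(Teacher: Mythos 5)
Your proposal is correct and follows essentially the same route as the paper's proof: the same two strong-Markov decompositions at $\tau_b^-$ (via \eqref{resolvent_density}, \eqref{laplace_in_terms_of_z}) and at $\eta_b^+$ (via \eqref{rsr}, \eqref{overshoot_reflected}, Fubini, and \eqref{W_overshoot_exp_special}), the same evaluation at $x=b$ using $\mathbb{W}(0)>0$ to pin down the constant, and the same strong-approximation passage to the unbounded-variation case. The only point treated more carefully in the paper is the a.e.\ vanishing of $\p_x(V_t\in\partial B)$ and of the atom at $0$ needed to justify $1_{\{V_t^{(n)}\in B\}}\to 1_{\{V_t\in B\}}$, which is delegated to Lemma 4.1 of \cite{YP20015}.
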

\begin{proof}
\par For convenience, let us denote the left-hand side  by $f^{(q)}(x,a;B)$.

(i) We suppose $Y$ is of bounded variation. For $x > b$, by using Remark \ref{remark_connection_Y_U}, the strong Markov property, \eqref{laplace_in_terms_of_z}, and (\ref{resolvent_density}),
\begin{align}\label{iden6sp}
f^{(q)}(x,a;B)&=\E_x\Big(\int_0^{\tau_a^+\wedge\tau_b^-}e^{-qt}1_{\{X_t\in B\}}\diff t\Big)+\E_x\left(e^{-q\tau_b^-} 1_{\{\tau_b^-<\tau_a^+\}}\right) f^{(q)}(b,a;B)\notag\\
&=\Big(\overline{\Gamma}^{(q)}_a(a-b;B)+f^{(q)}(b,a;B) \Big) \frac{W^{(q)}(a-x)}{W^{(q)}(a-b)} -\overline{\Gamma}^{(q)}_a(a-x;B).
\end{align}
For $0 \leq x \leq b$, again by Remark \ref{remark_connection_Y_U}, the strong Markov property, and \eqref{iden6sp},
\begin{align*}
f^{(q)}(x,a;B)
&= \e_x\bigg(\int_0^{\eta_b^+}e^{-qt}1_{\{U_t\in B\}}\diff t\bigg)+ \frac {\overline{\Gamma}^{(q)}_a(a-b;B)+f^{(q)}(b,a;B)} {W^{(q)}(a-b)}  \e_x\bigg(e^{-q\eta_b^+}W^{(q)}(a-U_{\eta_b^+})\bigg)\notag \\
&\qquad - \e_x\bigg(e^{-q\eta_b^+} \overline{\Gamma}^{(q)}_a(a-U_{\eta_b^+};B) \bigg).
\end{align*}
The first two expectations can be computed by \eqref{rsr} and  \eqref{W_overshoot_exp_special}.
For the third expectation, by  \eqref{overshoot_reflected},  \eqref{W_overshoot_exp_special}, and Fubini's theorem,
\begin{align*}
&\e_x\Big(e^{-q\eta_b^+} \overline{\Gamma}^{(q)}_a(a-U_{\eta_b^+};B) \Big)  \\
&= \int_0^{b}\int_{(b-y, \infty)} \int_{y+h}^a W^{(q)}(z-y-h) 1_{\{ z \in B\}} \diff z\left\{\frac{\mathbb{W}^{(q)}(b-x)}{\mathbb{W}_+^{(q)\prime}(b)}\mathbb{W}^{(q)\prime}_+(y)-\mathbb{W}^{(q)}(y-x)\right\}\Pi(\diff h)\diff y\notag\\
&\quad +\mathbb{W}(0) \int_{(b,\infty)}\int_{h}^a W^{(q)}(z-h) 1_{\{ z \in B\}} \diff z \frac{\mathbb{W}^{(q)}(b-x)}{\mathbb{W}_+^{(q)\prime}(b)} \Pi(\diff h) \\
&= \int_{b}^a 1_{\{ z \in B\}} \int_0^{b}\int_{(b-y, \infty)} W^{(q)}(z-y-h)  \left\{\frac{\mathbb{W}^{(q)}(b-x)}{\mathbb{W}_+^{(q)\prime}(b)}\mathbb{W}^{(q)\prime}_+(y)-\mathbb{W}^{(q)}(y-x)\right\}\Pi(\diff h)\diff y \diff z \notag\\
&\quad +\mathbb{W}(0) \int_{b}^a 1_{\{ z \in B\}} \int_{(b,\infty)} W^{(q)}(z-h)   \frac{\mathbb{W}^{(q)}(b-x)}{\mathbb{W}_+^{(q)\prime}(b)} \Pi(\diff h) \diff z \\
&= \int_{b}^a 1_{\{ z \in B\}}   \e_x\Big(e^{-q\eta_b^+} W^{(q)}(z-U_{\eta_b^+}) \Big) \diff z  \\
&= \int_{b}^a 1_{\{ z \in B\}} \Big( r^{(q)}(z-x;z)-\frac{\mathbb{W}^{(q)}(b-x)}{\mathbb{W}_+^{(q)\prime}(b)} r^{(q) \prime}_+(z;z)\Big) \diff z.
\end{align*}
Summing up these,
\begin{align} \label{f_x_a_B}
\begin{split}
f^{(q)}(x,a;B)&=\frac{\mathbb{W}^{(q)}(b-x)}{\mathbb{W}_+^{(q)\prime}(b)}\Gamma^{(q)\prime}_b(b;B)-\Gamma_b^{(q)}(b-x;B)\\ &-\int_{b}^a 1_{\{ z \in B\}} \Big( r^{(q)}(z-x;z)-\frac{\mathbb{W}^{(q)}(b-x)}{\mathbb{W}_+^{(q)\prime}(b)} r^{(q) \prime}_+(z;z) \Big) \diff z\\
&+ \frac {\overline{\Gamma}_a^{(q)}(a-b;B)+f^{(q)}(b,a;B)} {W^{(q)}(a-b)}  \Big( r^{(q)}(a-x;a)-\frac{\mathbb{W}^{(q)}(b-x)}{\mathbb{W}_+^{(q)\prime}(b)} r^{(q)\prime}_+(a;a) \Big).
\end{split}
\end{align}
Setting $x=b$,
\begin{align*}
f^{(q)}(b,a;B)&=\frac{\mathbb{W}(0)}{\mathbb{W}_+^{(q)\prime}(b)}\Gamma^{(q)\prime}_b(b;B) -\int_{b}^a 1_{\{ z \in B\}} \Big( W^{(q)}(z-b)-\frac{\mathbb{W}(0)}{\mathbb{W}_+^{(q)\prime}(b)} r^{(q) \prime}_+(z;z)\Big) \diff z\\
&+ \frac {\overline{\Gamma}^{(q)}_a(a-b;B)+f^{(q)}(b,a;B)} {W^{(q)}(a-b)}  \Big( W^{(q)}(a-b)-\frac{\mathbb{W}(0)}{\mathbb{W}_+^{(q)\prime}(b)} r^{(q)\prime}_+(a;a) \Big).
\end{align*}
Hence, solving for $f^{(q)}(b,a;B)$, we obtain
\begin{align}\label{potentialbsp2}
f^{(q)}(b,a;B)&=\frac{W^{(q)}(a-b)}{r^{(q)\prime}_+(a;a)}\left(\Gamma^{(q)\prime}_b(b;B)+\int_b^a 1_{\{ u \in B\}} r^{(q) \prime}_+(u;u) \diff u\right)-\overline{\Gamma}^{(q)}_a(a-b;B).
\end{align}
For $x > b$, substituting \eqref{potentialbsp2} in \eqref{iden6sp}, we have the claim.
For $0 \leq x \leq b$, substituting \eqref{potentialbsp2} in \eqref{f_x_a_B}, 
\begin{align*} 
&f^{(q)}(x,a;B) \\&=\frac{\mathbb{W}^{(q)}(b-x)}{\mathbb{W}_+^{(q)\prime}(b)}\Gamma^{(q)\prime}_b(b;B)-\Gamma_b^{(q)}(b-x;B)-\int_{b}^a 1_{\{ z \in B\}} \Big( r^{(q)}(z-x;z)-\frac{\mathbb{W}^{(q)}(b-x)}{\mathbb{W}_+^{(q)\prime}(b)} r^{(q) \prime}_+(z;z) \Big) \diff z\\
&\quad +  \Big(\Gamma^{(q)\prime}_b(b;B)+\int_b^a 1_{\{ u \in B\}} r^{(q) \prime}_+(u;u) \diff u\Big)   \Big( \frac {r^{(q)}(a-x;a)} {r^{(q)\prime}_+(a;a)} -\frac{\mathbb{W}^{(q)}(b-x)}{\mathbb{W}_+^{(q)\prime}(b)}  \Big) \\
&=-\Gamma^{(q)}_b(b-x;B) + \int_{b}^a 1_{\{ u \in B\}} \Big(  r^{(q) \prime}_+(u;u) \frac {r^{(q)}(a-x;a)} {r^{(q)\prime}_+(a;a)}   - r^{(q)}(u-x;u) \Big)\diff u +\Gamma^{(q)\prime}_b(b;B)    \frac {r^{(q)}(a-x;a)} {r^{(q)\prime}_+(a;a)},
\end{align*}
as desired.

(ii) 
 For the case $Y$ is of unbounded variation, we take a limit for the strongly approximating sequence $(Y^{(n)}; n \geq 1)$ of bounded variation.   Thanks to Proposition  \ref{prop_approximation} 
and Remark \ref{remark_strongly_approximating}, together with dominated convergence, the result holds essentially in the same way as in the spectrally negative case (Theorem 4.1 of \cite{YP20015}).  

To be more precise, for $q > 0$, using the same arguments as in Lemma 4.1 of \cite{YP20015}, $\p_x ( V_t^{(n)} = y )=\p_x ( V_t = y )=0$ for any a.e.\ $t > 0$ and $y \in (0,\infty)$ and $\p_x ( \sup_{0 \leq s \leq t} V_s = a ) = 0$ any a.e. $t > 0$. Hence, by  the same arguments as in Theorem 4.1 of \cite{YP20015}, we have the result when $0 \notin B$. Note that the hitting times $T_{a,n}^{+}$ defined in \eqref{def_T_hitting} for $V^{(n)}$ converge a.s.\ to $T_{a}^+$ similarly to the convergence of $\eta_{a,n}^+$ as shown in Remark \ref{remark_strongly_approximating}. 

For the resolvent at $\{0 \}$, because that of the reflected process $U$ does not have an atom for the unbounded variation case (see \eqref{rsr}), following the proof of Lemma 4.1 of \cite{YP20015} gives $\p_x ( V_t = 0 ) = 0$.  In view of the expression for the bounded variation case, by  Remark \ref{remark_strongly_approximating} and in particular by the fact that $\mathbb{W}_n^{(q)}(0) \xrightarrow{n \uparrow \infty} \mathbb{W}^{(q)}(0) = 0$, we can confirm that $\e_x\big(\int_0^{T_{a,n}^{+}}e^{-qt}1_{\{ V_t^{(n)} =0 \}}\diff t\big) \xrightarrow{n \uparrow \infty} 0 = \e_x\big(\int_0^{T_a^+}e^{-qt}1_{\{ V_t =0 \}}\diff t\big)$. This completes the proof for $q > 0$. For the case $q = 0$, the claim holds by monotone convergence and the continuity of the scale functions in $q$.

\end{proof}
\begin{remark}
In particular, when $\delta = 0$ in Theorem \ref{resolsp}, we recover \eqref{rsr}.
\end{remark}

We shall now take $a$ to $\infty$ in Theorem \ref{resolsp}.
By differentiating \eqref{tilde_r_q}, we obtain
\begin{align} \label{r_tilde_derivative}
\begin{split}
\hat{r}^{(q) \prime}(b-x)&=   \Phi(q) \Big( e^{-\Phi(q)x} (1+\delta \mathbb{W}(0))+\delta \int_x^be^{-\Phi(q)u}\mathbb{W}_+^{(q)\prime}(u-x)\diff u \Big)\\
&=   \Phi(q) \Big( e^{-\Phi(q)x}+\delta \Big(e^{-\Phi(q) b} \mathbb{W}^{(q)} (b-x) + \Phi(q) \int_x^be^{-\Phi(q)u}\mathbb{W}^{(q)}(u-x)\diff u \Big) \Big).
\end{split}
\end{align}
In particular,
\begin{align} \label{r_tilde_derivative_at_b}
\begin{split}
\hat{r}^{(q) \prime}(b)&=   \Phi(q)  \Big( (1+\delta \mathbb{W}(0))+\delta \int_0^be^{-\Phi(q)u}\mathbb{W}^{(q)\prime}_+(u)\diff u \Big) \\
&=  \Phi(q) \Big (1 +\delta \Big(e^{-\Phi(q) b} \mathbb{W}^{(q)} (b) + \Phi(q) \int_0^be^{-\Phi(q)u}\mathbb{W}^{(q)}(u)\diff u \Big) \Big).
\end{split}
\end{align}

By \eqref{W_q_limit}, if $q > 0$, or $q = 0$ and $\psi'_{X,+}(0) < 0$ (or $X_t \xrightarrow{t \uparrow \infty} \infty$), then $\Phi(q) > 0$ and hence
\begin{align}
\frac{r^{(q)}(a-x;a)}{r^{(q)\prime}_+(a;a)} \xrightarrow{a \uparrow \infty} \frac{\hat{r}^{(q)}(b-x)}{\hat{r}^{(q)\prime}(b)}. \label{convergence_r_ratio}
\end{align}

Applying this in Theorem \ref{resolsp} we get the following corollary. 


\begin{corollary}\label{resolinftysp} Fix $x \geq 0$ and any Borel set $B$ on $[0, \infty)$.

(i) If $q > 0$, or $q = 0$ and $\psi'_{X,+}(0)  < 0$ (or $X_t \xrightarrow{t \uparrow \infty} \infty$), then
\begin{align*}
\e_x\bigg(\int_0^{\infty}e^{-qt}1_{\{ V_t \in B \}}\diff t\bigg)
&=\frac{\hat{r}^{(q)}(b-x)}{\hat{r}^{(q)\prime}(b)}\Gamma_b^{(q)\prime}(b;B)- \Gamma_b^{(q)}(b-x; B) \notag \\
	&+\int_b^{\infty} 1_{\{u \in B \}}\left(\frac{\hat{r}^{(q)}(b-x)}{\hat{r}^{(q)\prime}(b)}r^{(q)\prime}_+(u;u)-r^{(q)}(u-x;u)\right)\diff u.
\end{align*}
(ii) If $q=0$ and $\psi'_{X,+}(0)  \geq 0$, it is infinity given $Leb(B) > 0$.

\end{corollary}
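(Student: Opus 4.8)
The plan is to obtain Corollary \ref{resolinftysp} as the $a \uparrow \infty$ limit of Theorem \ref{resolsp}, exactly along the lines already sketched in the paragraph preceding the corollary. For part (i), first I would fix $q > 0$ (the case $q = 0$ with $\psi'_{X,+}(0) < 0$, equivalently $X_t \to \infty$, being handled identically since both guarantee $\Phi(q) > 0$), and start from the expression
\[
\e_x\Bigl(\int_0^{T_a^+}e^{-qt}1_{\{V_t \in B\}}\diff t\Bigr) = -\Gamma_b^{(q)}(b-x;B) + \Gamma^{(q)\prime}_b(b;B)\frac{r^{(q)}(a-x;a)}{r^{(q)\prime}_+(a;a)} + \int_b^a 1_{\{u \in B\}}\Bigl(r^{(q)\prime}_+(u;u)\frac{r^{(q)}(a-x;a)}{r^{(q)\prime}_+(a;a)} - r^{(q)}(u-x;u)\Bigr)\diff u
\]
for $0 \le x \le a$. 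Since $T_a^+ \uparrow \infty$ $\p_x$-a.s.\ as $a \uparrow \infty$ (the process $V$ cannot explode and drifts appropriately), monotone convergence gives that the left-hand side converges to $\e_x\bigl(\int_0^\infty e^{-qt}1_{\{V_t \in B\}}\diff t\bigr)$. So it suffices to identify the limit of the right-hand side.

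The first two terms on the right are independent of $a$ except through the ratio $r^{(q)}(a-x;a)/r^{(q)\prime}_+(a;a)$, whose limit is $\hat r^{(q)}(b-x)/\hat r^{(q)\prime}(b)$ by \eqref{convergence_r_ratio}; this is the step that uses $\Phi(q) > 0$ via the asymptotics \eqref{W_q_limit} of $W^{(q)}$ and the explicit forms \eqref{tilde_r_q} and \eqref{r_tilde_derivative_at_b} of $\hat r^{(q)}$ and $\hat r^{(q)\prime}$. For the integral term, I would pass to the limit inside using dominated convergence: the integrand converges pointwise on $(b,\infty)$ to $1_{\{u\in B\}}\bigl(\hat r^{(q)}(b-x)/\hat r^{(q)\prime}(b)\cdot r^{(q)\prime}_+(u;u) - r^{(q)}(u-x;u)\bigr)$, and a dominating function must be produced. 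Here the key estimate is that for large $u$, the bracket $r^{(q)\prime}_+(u;u)\, r^{(q)}(a-x;a)/r^{(q)\prime}_+(a;a) - r^{(q)}(u-x;u)$ decays exponentially in $u$ uniformly in $a \ge u$: writing everything through the scale functions $W^{(q)}, \mathbb{W}^{(q)}$ and using $e^{-\Phi(q)y}W^{(q)}(y) \nearrow \psi_X'(\Phi(q))^{-1}$ and the analogous bound for $\mathbb{W}^{(q)}$, the ratio $r^{(q)}(u-x;u)/r^{(q)\prime}_+(u;u)$ is bounded and, more importantly, the difference inside the bracket — which is essentially the tail contribution $\e_x\bigl(\int_{T_a^+}^\infty e^{-qt}1_{\{V_t \in du\}}\diff t\bigr)$-type quantity — is summable in $u$ because $q>0$ forces exponential discounting. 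This gives an integrable dominating function and the limit passes through.

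Part (ii) is the degenerate case $q = 0$, $\psi'_{X,+}(0) \ge 0$. Here $\Phi(0) = 0$, so $\hat r^{(0)}(b-x) \equiv 1$ and the scale-function asymptotics degenerate; the cleanest route is a direct lower bound: since $\psi'_{X,+}(0) \ge 0$ means $-X$ does not drift to $-\infty$, equivalently $X$ does not drift to $+\infty$, and since on $[0,b)$ the process $V$ behaves like the reflected process $U$ which is recurrent in that regime, $V$ spends an infinite (undiscounted) amount of time in any set of positive Lebesgue measure — in particular in any fixed compact $B' \subseteq B$ with $Leb(B') > 0$ — so $\e_x\bigl(\int_0^\infty 1_{\{V_t \in B\}}\diff t\bigr) = \infty$. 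Alternatively, one monitors the Theorem \ref{resolsp} expression as $a \uparrow \infty$ with $q = 0$: the coefficient $r^{(0)}(a-x;a)/r^{(0)\prime}_+(a;a)$ now grows without bound (since $W^{(0)}(y) \to \infty$ when $\psi_X'(0+) = 0$, by \eqref{W_q_limit}), and the $\Gamma^{(0)\prime}_b(b;B)$ and $\int_b^\infty 1_{\{u\in B\}}r^{(0)\prime}_+(u;u)\diff u$ factors multiplying it are strictly positive when $Leb(B) > 0$, forcing divergence. The main obstacle throughout is the dominated-convergence justification in part (i): producing a single $a$-independent integrable bound for the bracketed integrand on $(b,\infty)$, which requires the uniform exponential tail estimate on $r^{(q)}(a-x;a)/r^{(q)\prime}_+(a;a)$ and careful bookkeeping with the right-hand derivatives $r^{(q)\prime}_+(u;u)$ near and far from $b$; once that is in hand, everything else is the routine limit identification already anticipated in \eqref{convergence_r_ratio}.
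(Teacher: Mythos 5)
Your overall route for part (i) --- send $a \uparrow \infty$ in Theorem \ref{resolsp} and use \eqref{convergence_r_ratio} --- is the paper's route, but the step you yourself flag as ``the main obstacle'' is left unproved, and it is the crux. You need to pass the limit $a\uparrow\infty$ through $\int_b^a 1_{\{u\in B\}}\bigl(r^{(q)\prime}_+(u;u)\,r^{(q)}(a-x;a)/r^{(q)\prime}_+(a;a)-r^{(q)}(u-x;u)\bigr)\diff u$, and your proposed dominated-convergence argument requires an $a$-independent integrable bound on $(b,\infty)$ for a quantity that is the \emph{difference of two exponentially growing functions of $u$} (both $r^{(q)\prime}_+(u;u)$ and $r^{(q)}(u-x;u)$ grow like $e^{\Phi(q)u}$ by \eqref{W_q_limit}); the assertion that the cancellation is ``summable in $u$ because $q>0$ forces exponential discounting'' is exactly what has to be proved, uniformly in $a\ge u$, and no estimate is supplied. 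The paper avoids this entirely: it first takes $B$ contained in a compact $[0,K]$, so that for $a>K$ the $u$-integral runs over the fixed bounded interval $[b,K]$ where the integrand converges and is uniformly bounded (dominated convergence is then immediate), and afterwards lets $K\uparrow\infty$ by monotone convergence, using that both sides are measures in $B$. If you want to keep your unbounded-domain argument, the clean repair is to observe that the bracketed integrand is the resolvent density of $V$ killed at $T_a^+$, hence nonnegative and nondecreasing in $a$, so monotone (not dominated) convergence applies; as written, though, the proof has a genuine hole.

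For part (ii) the paper argues differently again: it takes $q\downarrow 0$ in the already-established identity of part (i), noting from \eqref{r_tilde_derivative_at_b} that $\hat r^{(q)\prime}(b)\to\Phi(0)\cdot(\cdots)=0$ while $\hat r^{(q)}(b-x)\to 1$, so the right-hand side blows up, and monotone convergence handles the left-hand side. Your two alternatives are plausible in spirit but each needs repair: the recurrence argument must account for the fact that $V$ is \emph{not} the reflected process $U$ once it exceeds $b$ (one has to show $V$ returns below $b$ infinitely often and accumulates infinite occupation there); and in the $a\uparrow\infty$ route your stated reason for $r^{(0)}(a-x;a)/r^{(0)\prime}_+(a;a)\to\infty$ (that $W^{(0)}(y)\to\infty$) only covers the subcase $\psi'_{X,+}(0)=0$ --- when $\psi'_{X,+}(0)>0$ the scale function $W^{(0)}$ is bounded and the divergence instead comes from $r^{(0)\prime}_+(a;a)\to 0$.
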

\begin{proof}  (i) For the case $B$ is contained in a compact set, say $[0,K]$, dominated convergence and the convergence \eqref{convergence_r_ratio} show the result.
%
%
%
%
For the case $B$ is unbounded,  taking $K \uparrow \infty$ via  monotone convergence  shows the result.

(ii) In this case $\Phi(0) = 0$, and hence, in view of \eqref{r_tilde_derivative_at_b}, $\hat{r}^{(q)\prime}(b) \xrightarrow{q \downarrow 0} 0$. Hence, taking $q \downarrow 0$ in the result from (i) shows the result by monotone convergence.

\end{proof}

As another corollary, 
we provide the one-sided exit problem.
\begin{corollary}[One-sided exit problem]\label{rrosep}
For any $q\geq0$, $0 < b < a$, and $0 \leq x \leq a$, we have 
\begin{align*}
\mathbb{E}_x\left(e^{-qT_a^+} \right)
&=Z^{(q)}(a-x)+q\delta\int_x^b\mathbb{W}^{(q)}(y-x)W^{(q)}(a-y)\diff y -q\frac{r^{(q)}(a-x;a)}{r^{(q)\prime}_+(a;a)} \mathcal{R}^{(0,q)}(a).
\end{align*} 
In particular, $T_a^+ < \infty$ $\p_x$-a.s.\ for any $0 < b < a$ and $0 \leq x \leq a$. \end{corollary}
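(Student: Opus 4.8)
The plan is to reproduce the strategy used for Theorem~\ref{resolsp}: first treat $Y$ of bounded variation by a strong Markov decomposition at the refraction level $b$, then pass to the unbounded variation case via the approximation of Proposition~\ref{prop_approximation} and Remark~\ref{remark_strongly_approximating}, and finally read off $\p_x(T_a^+<\infty)=1$ by taking $q=0$. Write $h(x):=\E_x(e^{-qT_a^+})$ and assume first that $Y$ is of bounded variation. For $b\le x\le a$, Remark~\ref{remark_connection_Y_U} identifies $V$ with $X$ up to $T_b^-=\tau_b^-$, and since $T_a^+\wedge T_b^-=\tau_a^+\wedge\tau_b^-$ while $X$, being spectrally positive, passes below $b$ continuously (so $V_{T_b^-}=b$ on $\{T_b^-<T_a^+\}$), the strong Markov property at $T_b^-$ combined with the two-sided exit identities \eqref{laplace_in_terms_of_z} yields
\[
h(x)=Z^{(q)}(a-x)-Z^{(q)}(a-b)\frac{W^{(q)}(a-x)}{W^{(q)}(a-b)}+\frac{W^{(q)}(a-x)}{W^{(q)}(a-b)}\,h(b).
\]
Because $Z^{(q)}\equiv 1$ and $W^{(q)}\equiv 0$ on $(-\infty,0)$, this formula in fact holds for every $x\ge b$ (equalling $1$ for $x>a$), hence over the whole range of the overshoot $U_{\eta_b^+}$ that enters next.

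For $0\le x\le b$, Remark~\ref{remark_connection_Y_U} gives $V=U$ on $[0,\eta_b^+]$ with $T_b^+=\eta_b^+$, so the strong Markov property at $T_b^+$ together with the previous display produces
\[
h(x)=\E_x\!\big(e^{-q\eta_b^+}Z^{(q)}(a-U_{\eta_b^+})\big)+\frac{h(b)-Z^{(q)}(a-b)}{W^{(q)}(a-b)}\,\E_x\!\big(e^{-q\eta_b^+}W^{(q)}(a-U_{\eta_b^+})\big).
\]
I would then substitute \eqref{Z_overshoot_p_q} with $p=0$ and \eqref{W_overshoot_exp_special}, and set $x=b$ (using $\mathbb{W}^{(q)}(b-x)=\mathbb{W}(0)$ and $r^{(q)}(a-b;a)=W^{(q)}(a-b)$): the unknown $h(b)-Z^{(q)}(a-b)$ cancels from both sides, leaving
\[
h(b)-Z^{(q)}(a-b)=-q\,\mathcal{R}^{(0,q)}(a)\,\frac{W^{(q)}(a-b)}{r^{(q)\prime}_+(a;a)}.
\]
Inserting this back into the displayed formula for $h(x)$, the two terms proportional to $\mathbb{W}^{(q)}(b-x)/\mathbb{W}_+^{(q)\prime}(b)$ cancel, which gives the asserted expression on $[0,b]$; on $[b,a]$ the same expression follows from the first display after noting that there $r^{(q)}(a-x;a)=W^{(q)}(a-x)$ and $\int_x^b\mathbb{W}^{(q)}(y-x)W^{(q)}(a-y)\diff y=0$.

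For $Y$ of unbounded variation, take a strongly approximating bounded variation sequence $(Y^{(n)})$; by Proposition~\ref{prop_approximation} and Remark~\ref{remark_strongly_approximating} one has $T_{a,n}^+\to T_a^+$ a.s.\ (as in the proof of Theorem~\ref{resolsp}), $e^{-qT_{a,n}^+}\le 1$, and every scale-function quantity on the right-hand side converges, so dominated convergence transfers the identity for $q>0$; the case $q=0$ follows by monotone convergence and continuity of the scale functions in $q$, just as at the end of the proof of Theorem~\ref{resolsp}. Putting $q=0$ into the established identity then makes the right-hand side collapse to $Z^{(0)}(a-x)=1$ (the other two terms carry a factor $q$), whence $\p_x(T_a^+<\infty)=1$ for all $0<b<a$ and $0\le x\le a$. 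The computation is essentially algebraic once the decomposition is set up; the only delicate point is the recurring one in this paper, namely keeping the right-hand derivatives $r^{(q)\prime}_+$, $\mathbb{W}_+^{(q)\prime}$ and the boundary value $\mathbb{W}(0)$ straight in the bounded variation case, and allowing $U_{\eta_b^+}$ to exceed $a$ so that \eqref{Z_overshoot_p_q} and \eqref{W_overshoot_exp_special} apply verbatim.
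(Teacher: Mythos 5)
Your proposal is correct, but it takes a different route from the paper. The paper obtains the corollary directly from Theorem \ref{resolsp}: it sets $B=[0,a]$ to get $\e_x\big(\int_0^{T_a^+}e^{-qt}\diff t\big)$, simplifies the $\Gamma$-terms, computes $\mathcal{R}^{(0,q)}(b)=\mathbb{W}^{(q)}(b)$ and $\int_b^a r^{(q)}(u-x;u)\diff u$ via the convolution identity \eqref{RLqp} and Fubini, and then uses $\E_x(e^{-qT_a^+})=1-q\,\e_x\big(\int_0^{T_a^+}e^{-qt}\diff t\big)$. You instead redo the two-step strong Markov decomposition at $\tau_b^-$ and $\eta_b^+$ from scratch, which is precisely the argument the paper deploys later for Proposition \ref{occupation_time_below}; indeed your identity is the $p=0$ specialization of \eqref{occupation_identity1}, using \eqref{Z_overshoot_p_q} and \eqref{W_overshoot_exp_special} with $p=0$ and the extension of the two-sided exit formula to overshoots beyond $a$ (where $Z^{(q)}\equiv1$, $W^{(q)}\equiv0$). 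Both arguments are sound: the paper's is shorter because it reuses the already-established resolvent theorem but requires the algebraic manipulations with \eqref{RLqp}; yours is self-contained at the cost of repeating the fixed-point-in-$h(b)$ machinery. Your handling of the bounded/unbounded variation split, the limit $T_{a,n}^+\to T_a^+$, the $q=0$ case, and the finiteness of $T_a^+$ all match the paper's conventions, and the cancellation of the $\mathbb{W}^{(q)}(b-x)/\mathbb{W}_+^{(q)\prime}(b)$ terms that you invoke does go through.
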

\begin{proof}

By Theorem \ref{resolsp} and \eqref{r_prime_a_a}, we have
\begin{multline} \label{one_sided_0_T_a}
\e_x\bigg(\int_0^{T_a^+}e^{-qt}\diff t\bigg)=-\Gamma_b^{(q)}(b-x;[0,a]) \\+ \big(\mathcal{R}^{(0,q)}(a)-\mathcal{R}^{(0,q)}(b) +  \Gamma^{(q)\prime}_b(b;[0,a]) \big)   \frac {r^{(q)}(a-x;a)} {r^{(q)\prime}_+(a;a)}   - \int_{b}^a  r^{(q)}(u-x;u) \diff u.
\end{multline}
Here $\Gamma_b^{(q)}(b-x;[0,a]) = \overline{\mathbb{W}}^{(q)} (b-x)$ and $\Gamma^{(q)\prime}_b(b;[0,a]) = \mathbb{W}^{(q)} (b)$.  In addition, by differentiating \eqref{RLqp}, 
\begin{align*}
\mathcal{R}^{(0,q)}(b) 
= (1+\delta \mathbb{W}(0))W^{(q)}(b)+\mathbb{W}^{(q)} (b) - W^{(q)} (b) - \delta \mathbb{W} (0) W^{(q)} (b) =\mathbb{W}^{(q)} (b).
\end{align*}
Finally, by Fubini's theorem and \eqref{RLqp},
\begin{align*}
 \int_{b}^a  r^{(q)}(u-x;u) \diff u &= \int_b^a \Big( W^{(q)}(u-x)+\delta\int_{x}^{b}\mathbb{W}^{(q)}(z-x)W^{(q)\prime}_+(u-z) \diff z \Big) \diff u \\
 &= \overline{W}^{(q)} (a-x) - \overline{W}^{(q)} (b-x)  +    \delta\int_{x}^{b}\mathbb{W}^{(q)}(z-x) \big(W^{(q)} (a-z) - W^{(q)}(b-z) \big) \diff z  \\
 &= \overline{W}^{(q)} (a-x) +    \delta\int_{x}^{b}\mathbb{W}^{(q)}(z-x) W^{(q)} (a-z)  \diff z  - \overline{\mathbb{W}}^{(q)}(b-x). 
\end{align*}
Substituting these in \eqref{one_sided_0_T_a}, we have
\begin{align*} 
\e_x\bigg(\int_0^{T_a^+}e^{-qt}\diff t\bigg)
=  \frac {r^{(q)}(a-x;a)} {r^{(q)\prime}_+(a;a)}   \mathcal{R}^{(0,q)}(a) 
- \overline{W}^{(q)} (a-x) -   \delta\int_{x}^{b}\mathbb{W}^{(q)}(z-x) W^{(q)} (a-z)  \diff z.  \end{align*}
Now the result is immediate by the identity $\E_x (e^{-q T_a^+}) = 1 - q\e_x\big(\int_0^{T_a^+}e^{-qt}\diff t\big)$.
The second claim holds by taking $q \downarrow 0$.
\end{proof}

\begin{remark}  In view of Corollary \ref{rrosep}, when $\delta = 0$, we recover \eqref{upcrossing_time_reflected}.
\end{remark}

\subsection{Dividends}\label{dividendsp}
Recall the cumulative amount of dividends $L$ as defined in \eqref{L_def}. 
We compute their expected NPV using Theorem \ref{resolsp} and Corollary \ref{resolinftysp}.

%
%
%
%
%
\begin{corollary}
For any $q\geq0$, $0 < b < a$, and $0 \leq x \leq a$, we have
\begin{align*}
\mathbb{E}_x\Big(\int_0^{T_a^+}e^{-qt} \diff L_t\Big)&=  \delta  \Big( \overline{\mathbb{W}}^{(q)}(b-x)-\overline{W}^{(q)}(a-x)-\delta\int_x^b\mathbb{W}^{(q)}(y-x)W^{(q)}(a-y)\diff y \Big) \\
&+ \delta \frac{r^{(q)}(a-x;a)}{r^{(q)\prime}_+(a;a)}\left(\mathcal{R}^{(0,q)}(a) -\mathbb{W}^{(q)}(b)\right).
\end{align*}
\end{corollary}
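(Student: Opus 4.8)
The plan is to read this off directly from the resolvent formula of Theorem~\ref{resolsp}. First I would use \eqref{L_def} to write
\[
\mathbb{E}_x\Big(\int_0^{T_a^+}e^{-qt}\diff L_t\Big)=\delta\,\mathbb{E}_x\Big(\int_0^{T_a^+}e^{-qt}1_{\{V_t>b\}}\diff t\Big),
\]
and observe that, since $V_t\le a$ for every $t<T_a^+$, the pathwise identity $1_{\{V_t>b\}}=1_{\{V_t\in(b,a]\}}$ holds on $[0,T_a^+)$. Thus the dividend NPV equals $\delta$ times the resolvent measure of Theorem~\ref{resolsp} evaluated at the Borel set $B=(b,a]$, and the whole proof reduces to specializing that theorem to this $B$ and simplifying.

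Next I would check that both boundary terms in Theorem~\ref{resolsp} vanish for $B=(b,a]$: in $\Gamma_b^{(q)}(b-x;B)$ the integration variable runs over $[x,b]$, while in $\Gamma_b^{(q)\prime}(b;B)$ it runs over $\{0\}\cup[0,b]$ together with the atom at $0$, and both ranges are disjoint from $(b,a]$ up to a Lebesgue-null set (using $b>0$). Hence $\Gamma_b^{(q)}(b-x;(b,a])=\Gamma_b^{(q)\prime}(b;(b,a])=0$, and Theorem~\ref{resolsp} collapses to
\[
\mathbb{E}_x\Big(\int_0^{T_a^+}e^{-qt}1_{\{V_t\in(b,a]\}}\diff t\Big)=\frac{r^{(q)}(a-x;a)}{r^{(q)\prime}_+(a;a)}\int_b^a r^{(q)\prime}_+(u;u)\,\diff u-\int_b^a r^{(q)}(u-x;u)\,\diff u.
\]

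The remaining step is to evaluate the two integrals, which have already been computed inside the proof of Corollary~\ref{rrosep}. For the first, \eqref{r_prime_a_a} identifies $r^{(q)\prime}_+(u;u)=\mathcal{R}^{(0,q)\prime}_+(u)$, so $\int_b^a r^{(q)\prime}_+(u;u)\,\diff u=\mathcal{R}^{(0,q)}(a)-\mathcal{R}^{(0,q)}(b)$, and differentiating \eqref{RLqp} gives $\mathcal{R}^{(0,q)}(b)=\mathbb{W}^{(q)}(b)$. For the second, Fubini's theorem together with \eqref{RLqp} yields
\[
\int_b^a r^{(q)}(u-x;u)\,\diff u=\overline{W}^{(q)}(a-x)+\delta\int_x^b\mathbb{W}^{(q)}(y-x)W^{(q)}(a-y)\,\diff y-\overline{\mathbb{W}}^{(q)}(b-x).
\]
Substituting these into the reduced expression and multiplying through by $\delta$ produces exactly the claimed identity. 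As in Theorem~\ref{resolsp}, this covers both the bounded- and unbounded-variation cases simultaneously, the latter by the strongly approximating sequence of Proposition~\ref{prop_approximation} and Remark~\ref{remark_strongly_approximating}.

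Since every ingredient is borrowed verbatim from Theorem~\ref{resolsp} and from the computations in the proof of Corollary~\ref{rrosep}, there is no genuine obstacle here; the only two points deserving a moment's care are the pathwise reduction $1_{\{V_t>b\}}=1_{\{V_t\in(b,a]\}}$ on $[0,T_a^+)$, which legitimizes identifying the dividend NPV with a resolvent, and the vanishing of the two $\Gamma$-terms when $B=(b,a]$.
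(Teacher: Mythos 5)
Your proposal is correct, and it takes a cleaner route than the paper. The paper writes $\E_x\int_0^{T_a^+}e^{-qt}1_{\{V_t>b\}}\diff t$ as $\frac{1}{q}\bigl(1-\E_x(e^{-qT_a^+})\bigr)-\E_x\int_0^{T_a^+}e^{-qt}1_{\{V_t\leq b\}}\diff t$, which forces it to invoke Corollary \ref{rrosep} (the one-sided exit identity), to restrict first to $q>0$ (because of the division by $q$), and then to recover $q=0$ by monotone convergence and continuity of the scale functions in $q$. You instead apply Theorem \ref{resolsp} once, directly with $B=(b,a]$: the two $\Gamma$-terms vanish (your check is right --- the atom sits at $0\notin(b,a]$ since $b>0$, and the integration range $[x,b]$ resp.\ $[0,b]$ meets $(b,a]$ only in a Lebesgue-null set), and the surviving integrals $\int_b^a r^{(q)\prime}_+(u;u)\diff u=\mathcal{R}^{(0,q)}(a)-\mathbb{W}^{(q)}(b)$ and $\int_b^a r^{(q)}(u-x;u)\diff u$ are evaluated by exactly the computations already carried out in the proof of Corollary \ref{rrosep} (via \eqref{r_prime_a_a}, \eqref{RLqp}, and Fubini). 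This yields the stated identity for all $q\geq 0$ in one pass, with no limiting argument in $q$; the only price is re-deriving (or citing) those two integral evaluations, which the paper gets for free by routing through Corollary \ref{rrosep}. Your pathwise observation $1_{\{V_t>b\}}=1_{\{V_t\in(b,a]\}}$ on $[0,T_a^+)$ is valid since $V_t\leq a$ there, so identifying the dividend NPV with $\delta$ times a resolvent mass is legitimate.
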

\begin{proof}
We shall show for the case $q > 0$; the case $q = 0$ then holds by monotone convergence and the continuity of the scale functions in $q$. We have
\begin{align*}
\delta^{-1}\mathbb{E}_x\left(\int_0^{T_a^+}e^{-qt} \diff L_t\right)= \E_x\left(\int_0^{T_a^+}e^{-qt}1_{\{V_t>b\}}\diff t\right) = \frac {1 - \E_x(e^{-qT_a^+})} q - \E_x\left(\int_0^{T_a^+}e^{-qt}1_{\{V_t \leq b\}}\diff t\right). 
\end{align*}
Here, using Theorem \ref{resolsp}, we obtain
\begin{align*}
\mathbb{E}_x\left(\int_0^{T_a^+}e^{-qt}1_{\{V_t \leq b\}}\diff t\right)&= \mathbb{W}(0)\frac{r^{(q)}(a-x;a)}{r^{(q)\prime}_+(a;a)}+\int_0^{b}\Big(\frac{r^{(q)}(a-x;a)}{r^{(q)\prime}_+(a;a)}\mathbb{W}^{(q)\prime}_+(u)-\mathbb{W}^{(q)}(u-x)\Big)\diff u \\
&= \mathbb{W}^{(q)}(b)\frac{r^{(q)}(a-x;a)}{r^{(q)\prime}_+(a;a)}-\overline{\mathbb{W}}^{(q)}(b-x).
\end{align*}
By this and Corollary \ref{rrosep},
\begin{align*}
&\E_x\bigg(\int_0^{T_a^+}e^{-qt}1_{\{V_t>b\}}\diff t\bigg) \\ &= - \frac 1 q \Big( Z^{(q)}(a-x)+q\delta\int_x^b\mathbb{W}^{(q)}(y-x)W^{(q)}(a-y)\diff y-q\frac{r^{(q)}(a-x;a)}{r^{(q)\prime}_+(a;a)}\mathcal{R}^{(0,q)}(a)\Big) \\ &- \mathbb{W}^{(q)}(b)\frac{r^{(q)}(a-x;a)}{r^{(q)\prime}_+(a;a)}+\frac {\mathbb{Z}^{(q)}(b-x)} q \\
&=\Big( \overline{\mathbb{W}}^{(q)}(b-x)-\overline{W}^{(q)}(a-x)-\delta\int_x^b\mathbb{W}^{(q)}(y-x)W^{(q)}(a-y)\diff y \Big) +\frac{r^{(q)}(a-x;a)}{r^{(q)\prime}_+(a;a)}\left(\mathcal{R}^{(0,q)}(a) -\mathbb{W}^{(q)}(b)\right).
\end{align*}
Multiplying by $\delta$, we have the claim.
\end{proof}
\begin{corollary} \label{corollary_L}
For any $q>0$ and $x \geq 0$,
\begin{align}
&\mathbb{E}_x\left(\int_0^{\infty}e^{-qt} \diff L_t\right) = \delta \Big( \frac {\mathbb{Z}^{(q)}(b-x)} q -  \frac{\hat{r}^{(q)}(b-x)}{\hat{r}^{(q)\prime}(b)} \mathbb{W}^{(q)} (b) \Big). \label{L_NPV_infty}
\end{align}
For $q = 0$, it is infinity.
\end{corollary}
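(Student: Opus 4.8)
The plan is to pass to the limit $a \uparrow \infty$ in the previous corollary, exactly as Corollary \ref{resolinftysp} is deduced from Theorem \ref{resolsp}. First I would fix $q > 0$, $x \geq 0$, and pick $b < a$; the previous corollary gives
\begin{align*}
\mathbb{E}_x\Big(\int_0^{T_a^+}e^{-qt}\diff L_t\Big)
&= \delta\Big(\overline{\mathbb{W}}^{(q)}(b-x)-\overline{W}^{(q)}(a-x)-\delta\int_x^b\mathbb{W}^{(q)}(y-x)W^{(q)}(a-y)\diff y\Big)\\
&\quad + \delta\,\frac{r^{(q)}(a-x;a)}{r^{(q)\prime}_+(a;a)}\big(\mathcal{R}^{(0,q)}(a)-\mathbb{W}^{(q)}(b)\big).
\end{align*}
Since $q>0$, we have $\Phi(q)>0$, so the ratio $r^{(q)}(a-x;a)/r^{(q)\prime}_+(a;a)$ converges to $\hat{r}^{(q)}(b-x)/\hat{r}^{(q)\prime}(b)$ by \eqref{convergence_r_ratio}. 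The remaining task is to identify the limit of the bracketed sum $-\overline{W}^{(q)}(a-x)-\delta\int_x^b\mathbb{W}^{(q)}(y-x)W^{(q)}(a-y)\diff y + \frac{r^{(q)}(a-x;a)}{r^{(q)\prime}_+(a;a)}\mathcal{R}^{(0,q)}(a)$ as $a\uparrow\infty$; individually these terms blow up, so the point is that their divergent parts cancel.

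The cleanest way to do this is to recycle Corollary \ref{rrosep}, which already contains precisely the same combination. Indeed, from Corollary \ref{rrosep},
\[
\frac{r^{(q)}(a-x;a)}{r^{(q)\prime}_+(a;a)}\mathcal{R}^{(0,q)}(a) = \frac{1}{q}\Big(Z^{(q)}(a-x)+q\delta\int_x^b\mathbb{W}^{(q)}(y-x)W^{(q)}(a-y)\diff y - \mathbb{E}_x\big(e^{-qT_a^+}\big)\Big),
\]
and substituting this into the displayed expression above causes the $\int_x^b\mathbb{W}^{(q)}(y-x)W^{(q)}(a-y)\diff y$ terms to cancel, leaving
\[
\mathbb{E}_x\Big(\int_0^{T_a^+}e^{-qt}\diff L_t\Big) = \delta\Big(\overline{\mathbb{W}}^{(q)}(b-x) - \overline{W}^{(q)}(a-x) + \tfrac1q Z^{(q)}(a-x) - \tfrac1q\mathbb{E}_x\big(e^{-qT_a^+}\big) - \tfrac{r^{(q)}(a-x;a)}{r^{(q)\prime}_+(a;a)}\mathbb{W}^{(q)}(b)\Big).
\]
Now $\tfrac1q Z^{(q)}(a-x) - \overline{W}^{(q)}(a-x) = \tfrac1q$ by the definition $Z^{(q)} = 1 + q\overline{W}^{(q)}$, so the divergent pieces are gone; together with $\overline{\mathbb{W}}^{(q)}(b-x) + \tfrac1q = \tfrac1q\mathbb{Z}^{(q)}(b-x)$, the bracket becomes $\tfrac1q\mathbb{Z}^{(q)}(b-x) - \tfrac1q\mathbb{E}_x(e^{-qT_a^+}) - \tfrac{r^{(q)}(a-x;a)}{r^{(q)\prime}_+(a;a)}\mathbb{W}^{(q)}(b)$. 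Finally, since $T_a^+\uparrow\infty$ $\p_x$-a.s.\ as $a\uparrow\infty$ (reflection at $0$ keeps $V$ from drifting to $-\infty$, so $V$ never leaves $[0,\infty)$ and the first passage over $a$ escapes to time $\infty$), dominated convergence gives $\mathbb{E}_x(e^{-qT_a^+})\to 0$, while monotone convergence identifies the left-hand side limit as $\mathbb{E}_x(\int_0^\infty e^{-qt}\diff L_t)$. Combining with \eqref{convergence_r_ratio} yields \eqref{L_NPV_infty}.

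For the case $q=0$: here $\Phi(0)=0$ and $\psi'_{X,+}(0)\ge 0$ is possible, but regardless, one argues directly that $\mathbb{E}_x(L_\infty)=\mathbb{E}_x(\int_0^\infty 1_{\{V_t>b\}}\,\delta\diff t)=\infty$. Indeed the reflected process $U$ (which dominates $V$ up to the first passage over $b$, and more generally $V$ returns above $b$ infinitely often) is recurrent in the sense that it spends an infinite amount of time above any level, so $\int_0^\infty 1_{\{V_t>b\}}\diff t = \infty$ a.s.; alternatively, take $q\downarrow 0$ in \eqref{L_NPV_infty} and observe that $\hat{r}^{(q)\prime}(b)\to 0$ by \eqref{r_tilde_derivative_at_b} while $\mathbb{Z}^{(q)}(b-x)/q\to\infty$, forcing the right-hand side to $+\infty$ by monotone convergence applied to $\mathbb{E}_x(\int_0^\infty e^{-qt}\diff L_t)$.

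The main obstacle is the bookkeeping in the cancellation: one must be careful that the terms $\overline{W}^{(q)}(a-x)$, $Z^{(q)}(a-x)$, and $r^{(q)}(a-x;a)/r^{(q)\prime}_+(a;a)\cdot\mathcal{R}^{(0,q)}(a)$ each diverge linearly/exponentially in $a$, and that it is only after substituting Corollary \ref{rrosep} and using $Z^{(q)}=1+q\overline{W}^{(q)}$ that everything collapses. Once the algebraic identity is in place, the limit itself is routine dominated/monotone convergence together with the already-established facts $T_a^+\uparrow\infty$ a.s.\ and \eqref{convergence_r_ratio}.
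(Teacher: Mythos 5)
Your $q>0$ argument is correct, but it takes a genuinely different route from the paper's. The paper never passes to the limit $a\uparrow\infty$ in the dividend identity: it writes $\delta^{-1}\E_x\big(\int_0^\infty e^{-qt}\diff L_t\big)=\tfrac1q-\E_x\big(\int_0^\infty e^{-qt}1_{\{V_t<b\}}\diff t\big)$ and evaluates the last term directly from the already-established infinite-horizon resolvent, Corollary \ref{resolinftysp}, in two lines. You instead take $a\uparrow\infty$ in the preceding (finite-horizon) corollary, using Corollary \ref{rrosep} to cancel the divergent terms $\overline{W}^{(q)}(a-x)$ and $\tfrac{r^{(q)}(a-x;a)}{r^{(q)\prime}_+(a;a)}\mathcal{R}^{(0,q)}(a)$ against each other. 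Your algebra checks out and in fact yields the clean finite-$a$ identity
$\E_x\big(\int_0^{T_a^+}e^{-qt}\diff L_t\big)=\delta\big(\tfrac1q\mathbb{Z}^{(q)}(b-x)-\tfrac1q\E_x(e^{-qT_a^+})-\tfrac{r^{(q)}(a-x;a)}{r^{(q)\prime}_+(a;a)}\mathbb{W}^{(q)}(b)\big)$,
after which the limit via \eqref{convergence_r_ratio}, $T_a^+\uparrow\infty$, and monotone convergence is routine. Both routes are valid; the paper's is shorter only because the $a\uparrow\infty$ work was done once and for all in Corollary \ref{resolinftysp}.

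The $q=0$ case is where you have a real gap. Your ``alternative'' argument---let $q\downarrow0$ in \eqref{L_NPV_infty}, note $\hat{r}^{(q)\prime}(b)\to0$ and $\mathbb{Z}^{(q)}(b-x)/q\to\infty$, and conclude the right-hand side diverges---fails precisely in the regime where $\hat{r}^{(q)\prime}(b)\to0$, namely $\psi_{X,+}'(0)\ge0$ (i.e.\ $\Phi(0)=0$). There $\hat{r}^{(q)}(b-x)\to1$ and $\mathbb{W}^{(q)}(b)\to\mathbb{W}(b)>0$, so the \emph{subtracted} term $\hat{r}^{(q)}(b-x)\mathbb{W}^{(q)}(b)/\hat{r}^{(q)\prime}(b)$ also tends to $+\infty$; the right-hand side is an $\infty-\infty$ form whose divergence must be proved, not read off. (When $\psi_{X,+}'(0)<0$ one has $\Phi(0)>0$ and $\hat{r}^{(0)\prime}(b)>0$, so the subtracted term stays bounded and the argument works---but then your premise $\hat{r}^{(q)\prime}(b)\to0$ is false.) Your primary argument, that $V$ a.s.\ spends infinite time above $b$, would suffice but is asserted rather than proved. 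The paper avoids both problems by splitting cases: for $\psi_{X,+}'(0)\ge0$ it invokes Corollary \ref{resolinftysp}(ii) directly to get $\E_x\big(\int_0^\infty 1_{\{V_t>b\}}\diff t\big)=\infty$, and only in the complementary case does it send $q\downarrow0$ in \eqref{L_NPV_infty}.
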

\begin{proof}
(i) Suppose $q > 0$. We have 
\begin{align} \label{L_NPV_decomp}
\delta^{-1}\mathbb{E}_x\left(\int_0^{\infty}e^{-qt} \diff L_t\right)=\mathbb{E}_x\left(\int_0^{\infty}e^{-qt}1_{\{V_t >b\}}\diff t \right) = \frac 1 q -  \mathbb{E}_x\left(\int_0^{\infty}e^{-qt}1_{\{V_t<b\}}\diff t\right).
\end{align}
Here, using Corollary \ref{resolinftysp},
\begin{align} \label{V_below_b_temp}
\begin{split}
\mathbb{E}_x\left(\int_0^{\infty}e^{-qt}1_{\{V_t <b\}}\diff t\right)&= \mathbb{W}(0)\frac{\hat{r}^{(q)}(b-x)}{\hat{r}^{(q)\prime}(b)}+\frac{\hat{r}^{(q)}(b-x)}{\hat{r}^{(q)\prime}(b)}  \int_0^b\mathbb{W}_+^{(q)\prime}(u)\diff u- \int_0^b\mathbb{W}^{(q)}(u-x) \diff u \\
&=\frac{\hat{r}^{(q)}(b-x)}{\hat{r}^{(q)\prime}(b)}\mathbb{W}^{(q)}(b)-\overline{\mathbb{W}}^{(q)}(b-x).
\end{split}
\end{align}
Substituting this in \eqref{L_NPV_decomp},  the result holds for the case $q > 0$.

(ii) Suppose $q = 0$.  For the case $\psi'_{X,+}(0)  \geq 0$, it is infinity by Corollary \ref{resolinftysp}.
Otherwise, by the convergence \eqref{convergence_r_ratio}, 
the right-hand side of \eqref{L_NPV_infty} goes to infinity as $q \rightarrow 0$.  This together with
 monotone convergence applied to the left-hand side of \eqref{L_NPV_infty} completes the proof.  
\end{proof}
%

\subsection{Costs of capital injection}
Recall that $R$ models the cumulative amount of capital injection.  We shall obtain its expected NPV as follows.  Note that the result can be extended to  the case $x < 0$ because due to the reflection at  $0$, we have $\mathbb{E}_x (\int_{[0,T_a^+]}e^{-qt}\diff R_t ) = \mathbb{E} (\int_{[0,T_a^+]}e^{-qt}\diff R_t ) + |x|$.
\begin{proposition}\label{cispp}
For any $q \geq 0$, $0 < b < a$, and $0 \leq x \leq a$,
\begin{align}
\mathbb{E}_x\left(\int_{[0,T_a^+]}e^{-qt}\diff R_t\right)=\frac {r^{(q)}(a-x;a)}{r^{(q)\prime}_+(a;a)}.\notag
\end{align}
\end{proposition}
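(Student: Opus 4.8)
The plan is to mimic the structure used for the resolvent in Theorem \ref{resolsp}: first establish the identity when $Y$ is of bounded variation (so that $\mathbb{W}(0) > 0$ and the reflected process $U$ has an atom in its resolvent, making the decompositions clean), then pass to the unbounded variation case by the strong approximation of Proposition \ref{prop_approximation} together with Remark \ref{remark_strongly_approximating}. Throughout I would write $g^{(q)}(x,a) := \mathbb{E}_x(\int_{[0,T_a^+]} e^{-qt}\diff R_t)$ for brevity, and first handle $q > 0$, recovering $q=0$ at the end by monotone convergence and the continuity of the scale functions in $q$.

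For the bounded variation case, I would split on whether the process starts above or below $b$ and use Remark \ref{remark_connection_Y_U}. For $b < x \le a$: on $[0, T_b^-]$ the process $V$ coincides with $X$, which has no capital injection (no downward excursions get pushed up since $V > 0$ there), so by the strong Markov property at $T_b^- = \tau_b^-$ and \eqref{laplace_in_terms_of_z}, $g^{(q)}(x,a) = \mathbb{E}_x(e^{-q\tau_b^-} 1_{\{\tau_b^- < \tau_a^+\}}) g^{(q)}(b,a) = \frac{W^{(q)}(a-x)}{W^{(q)}(a-b)} g^{(q)}(b,a)$. For $0 \le x \le b$: on $[0, \eta_b^+]$ the process coincides with $U$, so the injected capital accrued before $\eta_b^+$ is exactly $\widetilde R_{\eta_b^+}$, whose NPV is given by \eqref{capital_injection_identity_SP}, namely $\frac{\mathbb{W}^{(q)}(b-x)}{\mathbb{W}^{(q)\prime}_+(b)}$; then at $\eta_b^+$ the strong Markov property contributes $\mathbb{E}_x(e^{-q\eta_b^+} g^{(q)}(U_{\eta_b^+}, a))$. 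Since for any $z \in (b,a]$ we have, by the above and $g^{(q)}(b,a)$, that $g^{(q)}(z,a) = \frac{W^{(q)}(a-z)}{W^{(q)}(a-b)} g^{(q)}(b,a)$, this expectation becomes $\frac{g^{(q)}(b,a)}{W^{(q)}(a-b)} \mathbb{E}_x(e^{-q\eta_b^+} W^{(q)}(a - U_{\eta_b^+}))$, which is computed by \eqref{W_overshoot_exp_special}. Assembling, $g^{(q)}(x,a) = \frac{\mathbb{W}^{(q)}(b-x)}{\mathbb{W}^{(q)\prime}_+(b)} + \frac{g^{(q)}(b,a)}{W^{(q)}(a-b)}\big(r^{(q)}(a-x;a) - \frac{\mathbb{W}^{(q)}(b-x)}{\mathbb{W}^{(q)\prime}_+(b)} r^{(q)\prime}_+(a;a)\big)$. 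Setting $x=b$ and using $r^{(q)}(a-b;a) = W^{(q)}(a-b)$ (clear from \eqref{tilde_r_q}), together with $\mathbb{W}(0) = \mathbb{W}^{(q)}(0)$, gives a scalar equation for $g^{(q)}(b,a)$ whose solution is $g^{(q)}(b,a) = W^{(q)}(a-b)/r^{(q)\prime}_+(a;a)$. Back-substituting yields $g^{(q)}(x,a) = r^{(q)}(a-x;a)/r^{(q)\prime}_+(a;a)$ for all $0 \le x \le a$ (the case $x > b$ following from the linear relation above).

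For the unbounded variation case, I would invoke Proposition \ref{prop_approximation} to get $V^{(n)}$ strongly approximating $V$, with $T_{a,n}^+ \to T_a^+$ a.s.\ (as noted in the proof of Theorem \ref{resolsp}). The subtlety is controlling the convergence of $\int_{[0,T_{a,n}^+]} e^{-qt} \diff R_t^{(n)}$; here I would decompose $R = \widetilde R$ on $[0, T_b^+]$ as in the construction, and more generally observe that $R$ only accrues during excursions of $V$ below $b$ (indeed below $0$), so the total NPV is dominated by the NPV of the reflection term of a plain reflected process, which is finite and converges by the strong approximation argument already established in Remark \ref{remark_strongly_approximating} for $\eta_b^+$ and the identity \eqref{capital_injection_identity_SP}. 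Combined with the pointwise convergence $\mathbb{W}_n^{(q)}(x) \to \mathbb{W}^{(q)}(x)$, $W_n^{(q)}(x) \to W^{(q)}(x)$, and the derivative convergence $\mathbb{W}_{n,+}^{(q)\prime} \to \mathbb{W}^{(q)\prime}$, $W_{n,+}^{(q)\prime} \to W^{(q)\prime}$ (which transfers to $r_n^{(q)}$, $r_n^{(q)\prime}$), dominated convergence finishes the case $q > 0$. The case $q = 0$ follows by monotone convergence and continuity of scale functions in $q$.

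The main obstacle I anticipate is the rigorous passage to the unbounded variation limit for the capital injection functional: unlike the resolvent, $R$ is a random measure on $[0,\infty)$ and one needs a uniform integrability or domination argument showing $\int_{[0,T_{a,n}^+]} e^{-qt}\diff R_t^{(n)} \to \int_{[0,T_a^+]} e^{-qt}\diff R_t$, which is not immediate from strong path approximation alone (the approximating reflection terms converge uniformly on compacts, but one must rule out mass escaping near $T_a^+$). The cleanest route is probably to bound $R^{(n)}$ above by the reflection term $\widetilde R^{(n)}$ of the reflected process $U^{(n)}$ run until a slightly larger passage time, for which the analogous convergence was effectively already secured in Remark \ref{remark_strongly_approximating}, and then identify the limit via the bounded variation formula. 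A secondary technical point is the care with left- versus right-hand derivatives of the scale functions in defining $r^{(q)\prime}_+(a;a)$, but since $a > b > 0$ this is handled exactly as in the discussion preceding the theorem and in Remark \ref{remark_differentiability}.
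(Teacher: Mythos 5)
Your proposal is correct and follows essentially the same route as the paper's proof: the same two-regime decomposition via Remark \ref{remark_connection_Y_U} (no injection above $b$ up to $\tau_b^-$; identity \eqref{capital_injection_identity_SP} plus the overshoot formula \eqref{W_overshoot_exp_special} below $b$), the same scalar equation at $x=b$ using $r^{(q)}(a-b;a)=W^{(q)}(a-b)$ and $\mathbb{W}(0)>0$, and the same strong-approximation passage to unbounded variation. The paper handles that limit by referring to Proposition 5.1 of \cite{YP20015} (noting the spectrally positive case is easier since there is no overshoot at the down-crossing time), which is in the spirit of the domination argument you sketch.
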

\begin{proof}
Let us denote the left-hand side by $g^{(q)}(x,a)$.

(i) Suppose $Y$ is of bounded variation. For $x > b$, by an application of Remark \ref{remark_connection_Y_U}, the strong Markov property, \eqref{laplace_in_terms_of_z}, and because $R$ does not increase on $[0,T_a^+\wedge T_b^-]$, we get 
\begin{align}\label{cisp0}
g^{(q)}(x,a)
&=g^{(q)}(b,a)\frac{W^{(q)}(a-x)}{W^{(q)}(a-b)}.
\end{align}
Now let us consider the case $0\leq x\leq b$: again by Remark \ref{remark_connection_Y_U}, the strong Markov property, (\ref{capital_injection_identity_SP}), \eqref{W_overshoot_exp_special}, and \eqref{cisp0},
\begin{align} \label{g_x_a}
\begin{split}
g^{(q)}(x,a) 
&= \mathbb{E}_x\Big(\int_{[0,\eta_b^+]}e^{-qt}\ud \widetilde{R}_t\Big)+\E_x\left(e^{-q \eta_b^+}g^{(q)}(U_{\eta_b^+},a)\right) \\
&=\frac{\mathbb{W}^{(q)}(b-x)}{\mathbb{W}_+^{(q)\prime}(b)} +\frac{g^{(q)}(b,a)}{W^{(q)}(a-b)} \Big( r^{(q)}(a-x;a) -\frac{\mathbb{W}^{(q)}(b-x)}{\mathbb{W}_+^{(q)\prime}(b)} r^{(q)\prime}_+(a;a) \Big).
\end{split}
\end{align}
By setting $x = b$ and solving  
 for $g^{(q)}(b,a)$, we have
\begin{equation}\label{cisp2}
g^{(q)}(b,a)=\frac{W^{(q)}(a-b)}{r^{(q)\prime}_+(a;a)}.
\end{equation}
Substituting \eqref{cisp2} in \eqref{cisp0}, we have the result for $x > b$.
On the other hand, for $0 \leq x < b$, substituting (\ref{cisp2}) in \eqref{g_x_a} gives
\begin{align}
g^{(q)}(x,a)
&=\frac{\mathbb{W}^{(q)}(b-x)}{\mathbb{W}_+^{(q)\prime}(b)}+\frac{1}{r^{(q)\prime}_+(a;a)}\bigg(r^{(q)}(a-x;a)-\frac{\mathbb{W}^{(q)}(b-x)}{\mathbb{W}_+^{(q)\prime}(b)}r^{(q)\prime}_+(a;a)\bigg)=\frac{r^{(q)}(a-x;a)}{r^{(q)\prime}_+(a;a)}.\notag
\end{align}
This completes the result for the bounded variation case.

(ii) For the case of unbounded variation, using Proposition \ref{prop_approximation} and Remark \ref{remark_strongly_approximating} we can follow the same steps as in the proof for the spectrally negative case (see Proposition 5.1 of \cite{YP20015}).   The proof for the spectrally positive case is easier because there is no need of taking care of the overshoot at the down-crossing time, and therefore the proof does not require that the first moment of $Y$ is finite.
\end{proof}
\begin{remark}
By taking $\delta=0$ in Proposition \ref{cispp}, we recover \eqref{capital_injection_identity_SP}.
\end{remark}
By taking $a$ to $\infty$ in Proposition \ref{cispp} we get the following. The proof is similar to that of Corollary \ref{resolinftysp} and is hence omitted. 
\begin{corollary} \label{cispp_infty}
Fix $x \geq 0$. If $q > 0$ or $q=0$ and $\psi'_{X,+}(0)  < 0$ (or $X_t \xrightarrow{t \uparrow \infty} \infty$ a.s.),
\begin{align}
\mathbb{E}_x\left(\int_{[0, \infty)}e^{-qt}\diff R_t\right)= \frac{\hat{r}^{(q)}(b-x)}{\hat{r}^{(q)\prime}(b)}.
\end{align}
Otherwise it is infinity.
\end{corollary}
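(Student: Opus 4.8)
The plan is to take the limit $a \uparrow \infty$ in the finite-interval identity
$\mathbb{E}_x(\int_{[0,T_a^+]}e^{-qt}\diff R_t) = r^{(q)}(a-x;a)/r^{(q)\prime}_+(a;a)$ from Proposition~\ref{cispp}, exactly as was done for the resolvent in Corollary~\ref{resolinftysp}. First I would recall that $T_a^+$ is increasing in $a$, so by monotone convergence $\mathbb{E}_x(\int_{[0,\infty)}e^{-qt}\diff R_t) = \lim_{a\uparrow\infty}\mathbb{E}_x(\int_{[0,T_a^+]}e^{-qt}\diff R_t)$, provided $T_a^+ \uparrow \infty$ as $a\uparrow\infty$; this last fact follows because $V$ is driven by the spectrally positive process $Y$ whose paths do not drift to $+\infty$ past every level in finite time, combined with Corollary~\ref{rrosep} (each $T_a^+$ is a.s.\ finite but their supremum is infinite since $Y$ oscillates or drifts to $-\infty$ and the reflection at $0$ and refraction at $b$ only push downward above $b$).

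The analytic core is the convergence of the ratio $r^{(q)}(a-x;a)/r^{(q)\prime}_+(a;a)$ as $a\uparrow\infty$, which is precisely \eqref{convergence_r_ratio}: when $q>0$, or $q=0$ with $\psi'_{X,+}(0)<0$, one has $\Phi(q)>0$, and using the definitions \eqref{tilde_r_q} of $r^{(q)}$ together with the asymptotics \eqref{W_q_limit} for $e^{-\Phi(q)\cdot}W^{(q)}(\cdot)$, both numerator and denominator grow like $e^{\Phi(q)a}$ up to explicit constants, so the ratio converges to $\hat r^{(q)}(b-x)/\hat r^{(q)\prime}(b)$. I would cite \eqref{convergence_r_ratio} directly rather than re-deriving it. Combining this with the monotone convergence above yields
\[
\mathbb{E}_x\Big(\int_{[0,\infty)}e^{-qt}\diff R_t\Big) = \frac{\hat r^{(q)}(b-x)}{\hat r^{(q)\prime}(b)}
\]
in the stated cases.

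For the remaining case $q=0$ with $\psi'_{X,+}(0)\geq 0$, i.e.\ $\Phi(0)=0$: here \eqref{r_tilde_derivative_at_b} shows $\hat r^{(q)\prime}(b)\to 0$ as $q\downarrow 0$ while $\hat r^{(q)}(b-x)\to 1$, so the ratio blows up; monotone convergence (in $q$ as well, using that $e^{-qt}\diff R_t$ increases as $q\downarrow 0$) forces $\mathbb{E}_x(\int_{[0,\infty)}e^{-qt}\diff R_t)=\infty$. Alternatively one argues directly: when $X$ does not drift to $+\infty$, $V$ is recurrent in a neighbourhood of $0$ and the local-time-type push $R$ accumulates unbounded mass. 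This mirrors the $q=0$ discussion in Corollary~\ref{resolinftysp}(ii) and Corollary~\ref{corollary_L}(ii).

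The main obstacle I anticipate is justifying the interchange of limit and expectation together with the finiteness/infiniteness dichotomy cleanly — i.e.\ making sure $T_a^+\uparrow\infty$ a.s.\ and that the monotone convergence applies to the (possibly infinite) total injected capital. Since the paper explicitly says the proof "is similar to that of Corollary~\ref{resolinftysp} and is hence omitted," the intended argument is short: quote \eqref{convergence_r_ratio}, apply monotone convergence, and handle $\Phi(0)=0$ via \eqref{r_tilde_derivative_at_b}. I would keep the write-up to those three moves and not belabour the a.s.\ divergence of $T_a^+$, which is standard for reflected/refracted spectrally one-sided processes.
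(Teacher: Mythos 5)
Your proposal is correct and follows essentially the same route the paper intends: pass to the limit $a\uparrow\infty$ in Proposition \ref{cispp} using the ratio convergence \eqref{convergence_r_ratio} together with monotone convergence (noting $T_a^+\uparrow\infty$ a.s., which holds simply because the paths of $V$ are bounded on compact time intervals), and handle the case $\Phi(0)=0$ by letting $q\downarrow 0$ in the $q>0$ identity via \eqref{r_tilde_derivative_at_b}. This is exactly the omitted argument the authors point to in Corollary \ref{resolinftysp}.
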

%

\subsection{Occupation times for dividend payouts}  
We conclude this section by computing  the occupation time of the process $V$ above and below the level of refraction $b > 0$. Namely, we compute the Laplace transforms of the following quantities: 
\[
\int_0^{T_a^+}1_{\{V_s<b\}}\diff s\qquad\text{and}\qquad \int_0^{T_a^+}1_{\{V_s > b\}}\diff s.
\]
Similarly to the case of resolvent and capital injection, these identities can be extended to the case $x < 0$.
\begin{proposition}\label{occupation_time_below}
For any $p\geq0$, $q\geq-p$, $0 < b < a$, and $0 \leq x \leq a$,
\begin{align} \label{occupation_identity1}
\begin{split}
\E_x&\Big(e^{-qT_a^+-p\int_0^{T_a^+}1_{\{V_s<b\}}\diff s}\Big)\\
&=Z^{(q)}(a-x)+p\overline{\mathbb{W}}^{(p+q)}(b-x)+q\int_x^b\mathbb{W}^{(p+q)}(u-x)\left(p\overline{W}^{(q)}(a-u)+\delta W^{(q)}(a-u)\right)\diff u\\
&-q\frac{\mathcal{R}^{(p,q)}(a)}{\mathcal{R}^{(p,q)\prime}_+(a)}\bigg(W^{(q)}(a-x)+\int_x^b\mathbb{W}^{(p+q)}(u-x)\left(pW^{(q)}(a-u)+\delta W_+^{(q)\prime}(a-u)\right)\diff u\bigg),
\end{split}
\end{align}
and
\begin{align} \label{occupation_identity2}
\begin{split}
	\E_x&\Big(e^{-qT_a^+-p\int_0^{T_a^+}1_{\{V_s>b\}}\diff s}\Big) \\
	&=Z^{(p+q)}(a-x)-p\overline{\mathbb{W}}^{(q)}(b-x)+(p+q)\int_x^b\mathbb{W}^{(q)}(u-x)\left(-p\overline{W}^{(p+q)}(a-u)+\delta W^{(p+q)}(a-u)\right)\diff u \\
	&-(p+q)\frac{\mathcal{R}^{(-p,p+q)}(a)}{\mathcal{R}^{(-p,p+q)\prime}_+(a)}\bigg(W^{(p+q)}(a-x)+\int_x^b\mathbb{W}^{(q)}(u-x)\left(-pW^{(p+q)}(a-u)+\delta W^{(p+q)\prime}_+(a-u)\right)\diff u\bigg).
	\end{split}
\end{align}

\end{proposition}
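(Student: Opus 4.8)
The plan is to follow the same strategy used throughout Section~\ref{section_refracted_reflected}: first establish the identities for the case where $Y$ is of bounded variation by conditioning via the strong Markov property at the crossing times, and then extend to the unbounded-variation case by the strong-approximation argument of Proposition~\ref{prop_approximation} together with Remark~\ref{remark_strongly_approximating} and dominated (resp.\ monotone) convergence. By symmetry, the two identities \eqref{occupation_identity1} and \eqref{occupation_identity2} are obtained from one another by swapping $\{V_s < b\}$ with $\{V_s > b\}$, i.e.\ by replacing $(p, q)$ with $(-p, p+q)$; so it suffices to prove \eqref{occupation_identity1} and then read off \eqref{occupation_identity2} from it. Concretely, in \eqref{occupation_identity1} the process spends ``$q$-time'' everywhere and additional ``$p$-time'' only below $b$, so on the excursions below $b$ the relevant discount rate is $p+q$ while above $b$ it is $q$; replacing $p \mapsto -p$ and $q \mapsto p+q$ interchanges the roles of the two regions, which is exactly the relationship between \eqref{occupation_identity1} and \eqref{occupation_identity2}.

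For the bounded-variation case, write $h^{(p,q)}(x,a) := \E_x\big(e^{-qT_a^+ - p\int_0^{T_a^+}1_{\{V_s<b\}}\diff s}\big)$. For $x > b$, use Remark~\ref{remark_connection_Y_U} to identify $V$ with $X$ up to $T_b^- = \tau_b^-$; since the extra occupation clock does not run above $b$, conditioning at $\tau_b^- \wedge \tau_a^+$ and applying \eqref{laplace_in_terms_of_z} gives
\[
h^{(p,q)}(x,a) = Z^{(q)}(a-x) - Z^{(q)}(a-b)\frac{W^{(q)}(a-x)}{W^{(q)}(a-b)} + h^{(p,q)}(b,a)\frac{W^{(q)}(a-x)}{W^{(q)}(a-b)}.
\]
For $0 \le x \le b$, identify $V$ with the reflected process $U$ up to $T_b^+ = \eta_b^+$, where the clock runs at rate $p+q$; conditioning at $\eta_b^+$ and using the strong Markov property gives $h^{(p,q)}(x,a)$ in terms of $\e_x\big(e^{-(p+q)\eta_b^+}\big)$ and $\e_x\big(e^{-(p+q)\eta_b^+} Z^{(q)}(a-U_{\eta_b^+})\big)$ and $\e_x\big(e^{-(p+q)\eta_b^+} W^{(q)}(a-U_{\eta_b^+})\big)$, which are precisely \eqref{upcrossing_time_reflected}, \eqref{Z_overshoot_p_q}, and \eqref{W_overshoot_p_q}. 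Setting $x = b$ yields a linear equation for the unknown $h^{(p,q)}(b,a)$; solving it and back-substituting into the two expressions above produces the stated formula, after simplification using \eqref{r_prime_a_a} and the definition \eqref{cap_R_p_q} of $\mathcal{R}^{(p,q)}$ and its derivative $\mathcal{R}^{(p,q)\prime}_+$. The appearance of $\mathcal{R}^{(p,q)}(a)/\mathcal{R}^{(p,q)\prime}_+(a)$ in place of the ratio $r^{(q)}(a-x;a)/r^{(q)\prime}_+(a;a)$ from Theorem~\ref{resolsp} is exactly what one expects, since the role formerly played by $W^{(q)}$ and $r^{(q)}$ is now played by $\mathbb{W}^{(p+q)}$ and $\mathcal{R}^{(p,q)}$ because of the shifted discount rate below $b$.

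For the unbounded-variation case, apply the identity just proved to a strongly approximating sequence $Y^{(n)}$ of bounded-variation spectrally positive \lev processes; by Proposition~\ref{prop_approximation}, $V^{(n)} \to V$ uniformly on compacts a.s., so $T^+_{a,n} \to T_a^+$ a.s.\ (as noted in the proof of Theorem~\ref{resolsp}) and the occupation integrals $\int_0^{T^+_{a,n}}1_{\{V^{(n)}_s < b\}}\diff s$ converge a.s., using that $\p_x(V_t = b) = 0$ for a.e.\ $t$. Bounded convergence on the left and the scale-function convergences of Remark~\ref{remark_strongly_approximating} (including $\mathbb{W}^{(q)}_n(0) \to \mathbb{W}^{(q)}(0) = 0$ and the convergence of the right-hand derivatives) on the right give the result for $q > 0$ and $p \geq -q$; the remaining boundary cases follow by monotone convergence and continuity of the scale functions in their index. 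The main obstacle is the bookkeeping in the bounded-variation derivation: one must keep careful track of which discount rate applies on which side of $b$, handle the atom term $\mathbb{W}(0)$ correctly in \eqref{Z_overshoot_p_q} and \eqref{W_overshoot_p_q}, and then carry out the algebraic simplification that collapses a seemingly messy combination of scale-function integrals into the compact $\mathcal{R}^{(p,q)}$-form — a computation whose spectrally negative analogue is Theorem~4.2 (or the occupation-time result) of \cite{YP20015}, but which here additionally requires tracking left/right derivatives in the bounded-variation-with-atoms situation.
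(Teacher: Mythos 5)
Your proposal is correct and follows essentially the same route as the paper: the reduction of \eqref{occupation_identity2} to \eqref{occupation_identity1} via the substitution $(p,q)\mapsto(-p,p+q)$, the strong Markov decomposition at $\tau_b^-$ for $x>b$ and at $\eta_b^+$ for $0\le x\le b$ using \eqref{laplace_in_terms_of_z}, \eqref{Z_overshoot_p_q} and \eqref{W_overshoot_p_q}, solving the resulting linear equation for $h^{(p,q)}(b,a)$, and the strong-approximation limit for the unbounded-variation case. No gaps.
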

\begin{proof}

We shall prove  \eqref{occupation_identity1}. The equality \eqref{occupation_identity2} holds by observing that
\[
\E_x\Big(e^{-qT_a^+-p\int_0^{T_a^+}1_{\{V_s>b\}}\diff s}\Big)=
\E_x\Big(e^{-(p+q)T_a^++p\int_0^{T_a^+}1_{\{V_s<b\}}\diff s}\Big).
\]

Let us denote the left-hand side of \eqref{occupation_identity1}  by $h^{(p,q)}(x,a)$.
Here we focus on the case of bounded variation; it can be extended to the unbounded variation case by dominated convergence, Proposition \ref{prop_approximation}, and Remark \ref{remark_continuity_theorem_scale_function}.

For the case $x > b$, by Remark \ref{remark_connection_Y_U}, the strong Markov property, and \eqref{laplace_in_terms_of_z}, 
\begin{align}\label{ot1sp0}
h^{(p,q)}(x,a)&=\E_x \big(e^{-q\tau_a^+} 1_{\{ \tau_a^+<\tau_b^- \}} \big)+\E_x \big(e^{-q\tau_b^-}h^{(p,q)}(b,a) 1_{\{\tau_b^-<\tau_a^+\}} \big)\notag\\
&=Z^{(q)}(a-x) + \big(h^{(p,q)}(b,a) -Z^{(q)}(a-b) \big) \frac{W^{(q)}(a-x)}{W^{(q)}(a-b)}.
\end{align}

Now, for the case $0\leq x \leq b$, we obtain, again by Remark \ref{remark_connection_Y_U}, the strong Markov property,  \eqref{upcrossing_time_reflected}, \eqref{Z_overshoot_p_q}, and \eqref{W_overshoot_p_q},
\begin{align}\label{ot1sp1}
\begin{split}
&h^{(p,q)}(x,a)=\E_x \Big(e^{-(p+q)\eta_b^+}h^{(p,q)}(U_{\eta_b^+},a) \Big)\\
&= \E_x \big(e^{-(p+q)\eta_b^+}Z^{(q)}(a-U_{\eta_b^+}) \big) + \frac {h^{(p,q)}(b,a) -Z^{(q)}(a-b)} {W^{(q)}(a-b)}  \E_x(e^{-(p+q)\eta_b^+}W^{(q)}(a-U_{\eta_b^+})) \\
&= Z^{(q)}(a-x)+p\overline{\mathbb{W}}^{(p+q)}(b-x)+q\int_x^b\mathbb{W}^{(p+q)}(u-x)\left(p\overline{W}^{(q)}(a-u)+\delta W^{(q)}(a-u)\right)\diff u\\
&-q\frac{\mathbb{W}^{(p+q)}(b-x)}{\mathbb{W}_+^{(p+q)\prime}(b)}\mathcal{R}^{(p,q)}(a) + \frac {h^{(p,q)}(b,a) -Z^{(q)}(a-b)} {W^{(q)}(a-b)}  \\ &\times \Bigg( W^{(q)}(a-x)+\int_x^b\mathbb{W}^{(p+q)}(u-x)\left(pW^{(q)}(a-u)+\delta W^{(q)\prime}_+(a-u)\right)\diff u -\frac{\mathbb{W}^{(p+q)}(b-x)}{\mathbb{W}_+^{(p+q)\prime}(b)}\mathcal{R}_+^{(p,q)\prime}(a) \Bigg).
\end{split}
\end{align}
Setting $x = b$ and 
solving for $h^{(p,q)}(b,a)$, 
\begin{align} \label{h_at_b}
h^{(p,q)}(b,a) = Z^{(q)}(a-b)- q W^{(q)}(a-b)\frac {\mathcal{R}^{(p,q)}(a)} {\mathcal{R}^{(p,q)\prime}_+(a)}.			
\end{align}
Substituting \eqref{h_at_b} in \eqref{ot1sp0}, we have the claim for $x > b$.  On the other hand, substituting \eqref{h_at_b} in \eqref{ot1sp1}, we have the claim for $0 \leq x < b$.



\end{proof}

\section{On the dividend problem with capital injection} \label{section_dividend}

In this section, we use the results obtained in Section \ref{section_refracted_reflected} to solve the optimal dividend problem in the dual model driven by $Y$.  We consider a version  where the time horizon is infinity, and the shareholders are required to inject capital to prevent the company from going bankrupt, with an extra condition on the dividend strategy described below.  
 
A strategy  is a pair $\pi := \left( L_t^{\pi}, R_t^{\pi}; t \geq 0 \right)$ of nondecreasing, right-continuous, and adapted processes (with respect to the filtration generated by $Y$) such that $L_{0-}^\pi = R_{0-}^\pi = 0$ where $L^{\pi}$ is the cumulative amount of dividends and $R^{\pi}$ is that of injected capital. With $V_{0-}^\pi := x$ and $V_t^\pi := Y_t - L_t^\pi + R_t^\pi$, $t \geq 0$, it is required that $V_t^\pi \geq 0$ a.s.\ uniformly in $t$.  In addition,  with $\delta > 0$ fixed, we require that $L^\pi$ is absolutely continuous with respect to the Lebesgue measure of the form $L_t^\pi = \int_0^t \ell^\pi_s \diff s$, $t \geq 0$, with $\ell^\pi$ restricted to take values in $[0,\delta]$ uniformly in time.  

Assuming that $\beta > 1$ is the cost per unit injected capital and $q > 0$ is the discount factor, we want to maximize
\begin{align*}
v_{\pi} (x) := \mathbb{E}_x \left( \int_0^\infty e^{-q t} \ell_t^\pi  \diff t - \beta \int_{[0, \infty)} e^{-q t} \diff R_t^{\pi}\right), \quad x \geq 0.
\end{align*}
Hence the problem is to compute
\begin{equation*}
v(x):=\sup_{\pi \in \mathcal{A}}v_{\pi}(x), \quad x \geq 0,
\end{equation*}
where $\mathcal{A}$ is the set of all admissible strategies  that satisfy the constraints described above.

Our objective is to show the optimality of the refraction-reflection strategy, say $\pi^{b}$, with a suitable refraction level $b \geq 0$. Namely, dividends are paid whenever the surplus process is above $b$ at the maximal rate $\delta$ while it is pushed by capital injection whenever it attempts to down-cross zero.  It is clear that this strategy is admissible and its expected NPV is given by 
\begin{align} \label{v_pi}
v^b (x) := \mathbb{E}_x \left( \int_0^\infty e^{-q t} \diff L_t^{0,b} - \beta \int_{[0, \infty)} e^{-q t} \diff R_t^{0,b}\right), \quad x \geq 0,
\end{align}
where $L^{0,b}$ and $R^{0,b}$ are the processes studied in the previous section. Here, we add the superscripts to stress the dependence on the refraction level $b$, which we aim to choose.

\subsection{Candidate value function}
%

Fix $b > 0$. By Corollaries \ref{corollary_L} and \ref{cispp_infty},
\begin{align}
&v^b(x) = \delta  \frac {\mathbb{Z}^{(q)}(b-x)} q -   \frac{\hat{r}^{(q)}(b-x)}{\hat{r}^{(q) \prime}(b)} (\delta \mathbb{W}^{(q)} (b) +\beta ), \quad x \geq 0. \label{v_b_x}
\end{align}

Given the spectrally positive L\'evy process $Y$, we call a function $g$ (defined on the real line) \emph{sufficiently smooth} at $x > 0$ if $g$ is continuously differentiable when $Y$ has paths of bounded variation and is twice continuously differentiable  when it has paths of unbounded variation.

Here, we shall obtain a condition such that $v^b$ is sufficiently smooth at $b$.  We note by the continuity of the scale function that, regardless of the choice of $b$, $v^b(x)$ is sufficiently smooth on $(0, \infty) \backslash \{b\}$.

Suppose $x > b$.  Because $v^b(x) =  {\delta} / q -   {e^{-\Phi(q)x}} (\delta \mathbb{W}^{(q)} (b) +\beta ) /\hat{r}^{(q) \prime}(b)$,
\begin{align*}
v^{b \prime}(x) =  \Phi(q) \frac{e^{-\Phi(q)x}}{\hat{r}^{(q) \prime}(b)} (\delta \mathbb{W}^{(q)} (b) +\beta ) \quad \textrm{and} \quad
v^{b \prime \prime}(x) = - \Phi(q)^2 \frac{e^{-\Phi(q)x}}{\hat{r}^{(q) \prime}(b)} (\delta \mathbb{W}^{(q)} (b) +\beta ).
\end{align*}
By taking $x \downarrow b$, we have
\begin{align} \label{v_b_derivative_right}
v^{b \prime}_+(b) =  \Phi(q) \frac{e^{-\Phi(q)b}}{\hat{r}^{(q) \prime}(b)} (\delta \mathbb{W}^{(q)} (b) +\beta ) \quad \textrm{and} \quad
v^{b \prime\prime}_+(b) &=  -\Phi(q)^2 \frac{e^{-\Phi(q)b}}{\hat{r}^{(q) \prime}(b)} (\delta \mathbb{W}^{(q)} (b) +\beta ).
\end{align}

Suppose $x < b$, then by differentiating \eqref{v_b_x},
\begin{align}\label{derivative_vf}
v^{b\prime}(x) 
= -\delta  {\mathbb{W}^{(q)}(b-x)}  +  \frac{ \hat{r}^{(q) \prime} (b-x)}{\hat{r}^{(q) \prime}(b)} (\delta \mathbb{W}^{(q)} (b) +\beta ).
\end{align}
Therefore, by taking $x \uparrow b$,
\begin{align} \label{v_b_derivative_left}
v^{b\prime}_-(b) 
&= -\delta  {\mathbb{W}(0)}  +  \frac{ \Phi(q) e^{-\Phi(q)b} (1+\delta \mathbb{W}(0))}{\hat{r}^{(q) \prime}(b)} (\delta \mathbb{W}^{(q)} (b) +\beta ).
\end{align}
By matching \eqref{v_b_derivative_right} and \eqref{v_b_derivative_left},  we see that $v^b$ is differentiable at $b$ if and only if
\begin{align*}
0
&=   \mathbb{W} (0) \Big( \frac{ \Phi(q) e^{-\Phi(q)b} }{\hat{r}^{(q) \prime}(b)} (\delta \mathbb{W}^{(q)} (b) +\beta ) - 1 \Big).
\end{align*}
For the case of bounded variation (see \eqref{eq:Wqp0}), this is equivalent to 
\begin{align}\label{the_condition}
\frac{ \Phi(q) e^{-\Phi(q)b} }{\hat{r}^{(q) \prime}(b)} (\delta \mathbb{W}^{(q)} (b) +\beta ) =1. 
\end{align}
For the case of unbounded variation, because the differentiability automatically holds by $\mathbb{W}(0) = 0$, we shall further pursue twice continuous differentiability at $b$; for $x < b$, by differentiating  (\ref{derivative_vf}) and recalling \eqref{eq:Wqp0} and Remark  \ref{remark_smoothness},
\begin{align*}
v^{b\prime \prime}(x) 
&= \delta  {\mathbb{W}^{(q) \prime}(b-x)}  - \Phi(q) (\delta \mathbb{W}^{(q)} (b) +\beta ) \\ &\qquad \times\frac{ \Phi(q) e^{-\Phi(q)x}+\delta \Big(e^{-\Phi(q) b} \mathbb{W}^{(q) \prime} (b-x) + \Phi(q) \int_x^be^{-\Phi(q)u}\mathbb{W}^{(q) \prime}(u-x)\diff u \Big)}{\hat{r}^{(q) \prime}(b)}.
\end{align*}
In particular, if \eqref{the_condition} holds, then for $x>b$
	\begin{align*}
	v^{b\prime \prime}(x)
	&=   - \Phi(q) (\delta \mathbb{W}^{(q)} (b) +\beta )\frac{ \Phi(q) e^{-\Phi(q)x}+\delta  \Phi(q) \int_x^be^{-\Phi(q)u}\mathbb{W}^{(q) \prime}(u-x)\diff u }{\hat{r}^{(q) \prime}(b)},
\end{align*}
and therefore
\begin{align*}
	v^{b\prime \prime}_+(b)=-\Phi(q)^2 \frac{e^{-\Phi(q)b}}{\hat{r}^{(q) \prime}(b)} (\delta \mathbb{W}^{(q)} (b) +\beta )=v^{b\prime \prime}_-(b).
\end{align*}
Hence twice continuous differentiability at $b$ is satisfied if \eqref{the_condition} holds.

In order to see the existence of $b$ such that \eqref{the_condition} holds, with
\begin{align}
 f(b)   :=1  +\delta \Phi(q) \int_0^b e^{-\Phi(q) u} \mathbb{W}^{(q)}(u) \diff u - e^{-\Phi(q)b}  \beta, \quad b \geq 0, \label{f_b}
\end{align}
we have, by \eqref{r_tilde_derivative_at_b},
\begin{align} \label{the_condition2}
\eqref{the_condition}
 \Longleftrightarrow  e^{-\Phi(q)b}  \beta =1  +\delta \Phi(q) \int_0^b  e^{-\Phi(q) u}\mathbb{W}^{(q)}(u) \diff u
\Longleftrightarrow f(b) = 0.
\end{align}

Because  
\begin{align} \label{property_f}
f(0)   =1  -  \beta < 0, \quad f'(b)   = \Phi(q)   e^{-\Phi(q) b} \big( \delta \mathbb{W}^{(q)}(b) +  \beta \big) > 0, \quad b > 0,
\end{align}
and $f'(b) \xrightarrow{b \uparrow \infty} \infty$ by \eqref{W_q_limit},
 there must be a unique $b^* > 0$ such that $f(b^*) = 0$ or equivalently \eqref{the_condition} holds.

With this $b^*$, we have 
$
   (\delta  \mathbb{W}^{(q)}(b^*) + \beta) / \hat{r}^{(q) \prime}(b^*) = e^{\Phi(q)b^*} / \Phi(q)$.
Hence the corresponding candidate value function becomes, by \eqref{v_b_x},  for $x \geq 0$,
\begin{align} \label{value_function}
\begin{split}
v^{b^*}(x) 
&=  \delta  \frac {\mathbb{Z}^{(q)}(b^*-x)} q - \frac {e^{\Phi(q) b^*}} {\Phi(q)}  \hat{r}^{(q)}(b^*-x) \\
&=  \delta  \frac {\mathbb{Z}^{(q)}(b^*-x)} q -   \Big( \frac {e^{-\Phi(q)(x-b^*)}} {\Phi(q)}+\delta \int_x^{b^*}e^{-\Phi(q)(u-b^*)}\mathbb{W}^{(q)}(u-x)\diff u \Big).
\end{split}
\end{align}
In summary, we have the following.
\begin{lemma}  \label{smooth_fit}There exists a unique $b^* > 0$ such that $f(b^*) = 0$.  In addition $v^{b^*}$ is continuously differentiable at $b^*$, and in particular  for the case of unbounded variation it is twice continuously differentiable.  Namely, $v^{b^*}$ is sufficiently smooth on $(0, \infty)$.
\end{lemma}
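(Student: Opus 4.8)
The plan is to assemble the computations already carried out in the paragraphs leading up to the statement; the lemma splits into (a) existence and uniqueness of the root $b^*$ of the function $f$ in \eqref{f_b}, and (b) the regularity of $v^{b^*}$, which by \eqref{the_condition2} is controlled precisely by the requirement $f(b^*)=0$.

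For (a), I would argue directly from \eqref{property_f}: the map $f$ is continuous on $[0,\infty)$, $f(0)=1-\beta<0$ since $\beta>1$, and $f'(b)=\Phi(q)e^{-\Phi(q)b}(\delta\mathbb{W}^{(q)}(b)+\beta)>0$ for every $b>0$, so $f$ is strictly increasing on $(0,\infty)$ and uniqueness is automatic once a root is produced. To produce one I would show $f(b)\to\infty$ as $b\uparrow\infty$: by \eqref{W_q_limit} together with the strict ordering $\varphi(q)>\Phi(q)$, valid for $q>0$, one has $e^{-\Phi(q)b}\mathbb{W}^{(q)}(b)=e^{(\varphi(q)-\Phi(q))b}(e^{-\varphi(q)b}\mathbb{W}^{(q)}(b))\to\infty$, hence $f'(b)\to\infty$ and so $f(b)\to\infty$. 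The intermediate value theorem then yields a (unique) $b^*>0$ with $f(b^*)=0$, which by \eqref{the_condition2} is precisely the level at which \eqref{the_condition} holds; substituting it back into \eqref{v_b_x} gives the closed form \eqref{value_function}.

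For (b), away from $b^*$ the claim is immediate: on $(b^*,\infty)$ the function $v^{b^*}$ equals a constant plus a multiple of $e^{-\Phi(q)x}$, hence is $C^\infty$; on $(0,b^*)$, formula \eqref{derivative_vf} together with the second expression for $\hat{r}^{(q)\prime}$ in \eqref{r_tilde_derivative}, which involves $\mathbb{W}^{(q)}$ (through itself and an integral of it) but not its derivative, shows $v^{b^*\prime}$ is continuous, so $v^{b^*}\in C^1((0,b^*))$ unconditionally, while in the unbounded-variation case the second-derivative expression displayed just before the statement involves $\mathbb{W}^{(q)\prime}$, which is continuous on $(0,\infty)$ by Remark \ref{remark_smoothness} (and \cite{Chan2011}), so $v^{b^*}\in C^2((0,b^*))$. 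At $b^*$ itself, matching \eqref{v_b_derivative_right} with \eqref{v_b_derivative_left} reduces, after cancelling the common factor $\mathbb{W}(0)=c_Y^{-1}>0$ in the bounded-variation case, to exactly \eqref{the_condition}, which holds by the choice of $b^*$; in the unbounded-variation case $\mathbb{W}(0)=0$ makes the first-derivative matching automatic, and the computation preceding the statement shows that under \eqref{the_condition} both one-sided second derivatives at $b^*$ equal $-\Phi(q)^2e^{-\Phi(q)b^*}(\delta\mathbb{W}^{(q)}(b^*)+\beta)/\hat{r}^{(q)\prime}(b^*)$. Piecing the interior regularity together with the matching at $b^*$ then gives that $v^{b^*}$ is sufficiently smooth on $(0,\infty)$.

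\textbf{Main obstacle.} The only steps demanding genuine care are the verification that $f$ escapes to $+\infty$, which really uses the distinct exponential growth rates of the two scale functions and the strict inequality $\varphi(q)>\Phi(q)$ for $q>0$, and, in the unbounded-variation case, the appeal to $C^1$-smoothness of $\mathbb{W}^{(q)}$ off the origin so that the displayed second-derivative formula makes sense. Everything else is bookkeeping of one-sided derivatives already done in the text.
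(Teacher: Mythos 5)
Your proposal is correct and follows essentially the same route as the paper: the intermediate value theorem applied to $f$ using \eqref{property_f} and the divergence of $e^{-\Phi(q)b}\mathbb{W}^{(q)}(b)$ from \eqref{W_q_limit} for existence and uniqueness, and the matching of the one-sided derivatives \eqref{v_b_derivative_right}--\eqref{v_b_derivative_left} (resp.\ second derivatives in the unbounded variation case) via \eqref{the_condition} for the smooth fit. Your explicit justification that $f(b)\to\infty$ via the strict inequality $\varphi(q)>\Phi(q)$ is a welcome elaboration of a step the paper leaves implicit, but the argument is the same.
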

%
%

\begin{remark} Recall that $b^*$ is strictly positive and, as is clear from \eqref{property_f}, $b^* \downarrow 0$ as $\beta \downarrow 1$. This is consistent with the results in \cite{BKY} without the absolutely continuous assumption. On the other hand, as in Avram et al.\ \cite{APP2007}, this is not necessarily the case in the  spectrally negative \lev model in particular when $Y$ is a compound Poisson process (with a drift) where $0$ is irregular for $(-\infty, 0)$.
\end{remark}
\begin{remark} \label{remark_convergence_delta}
(i) Because, as $\delta \rightarrow \infty$, $\Phi(q) \rightarrow 0$ and $\delta \Phi(q) = q - \psi_Y (\Phi(q)) \rightarrow q $ (see \eqref{def_varphi}), the function $f(b)$ as in \eqref{f_b} converges as $\delta \rightarrow \infty$ to $\mathbb{Z}^{(q)} (b) - \beta$, for all $b \geq 0$.
This is consistent with \cite{BKY} without the absolutely continuous assumption where the optimal reflection  barrier is $ (\mathbb{Z}^{(q)})^{-1} (\beta)$. \\ 
(ii) Furthermore, notice that \eqref{value_function} can be rewritten
\begin{align} 
\begin{split}
v^{b^*}(x) 
&=  \frac 1 q \Big(\delta - \frac q {\Phi(q)} \Big)   + \frac {1-e^{-\Phi(q)(x-b^*)}} {\Phi(q)}-\delta \int_x^{b^*} (e^{-\Phi(q)(u-b^*)} - 1) \mathbb{W}^{(q)}(u-x)\diff u. \label{v_b_star_rewrite}
\end{split}
\end{align}
With $b^*$ fixed constant, it converges to the value function of  \cite{BKY}:  $- \int_{0}^{b^*-x} \mathbb{Z}^{(q)} (y) \diff y - \psi_{Y,+}'(0)  / q$.  Indeed,
\begin{align*}
\delta - \frac q {\Phi(q)} = \delta - \frac {\psi_Y (\Phi(q)) + \delta \Phi(q)} {\Phi(q)} = - \frac {\psi_Y (\Phi(q))} {\Phi(q)} \xrightarrow{\delta \uparrow \infty} - \psi_{Y,+}'(0) \quad \textrm{and} \quad
\frac {1-e^{-\Phi(q)(x-b^*)}} {\Phi(q)} \xrightarrow{\delta \uparrow \infty} x - b^*.
\end{align*}
For the last integral of \eqref{v_b_star_rewrite},
\begin{align*}
\delta (e^{-\Phi(q)(u-b^*)} - 1) =  \delta \Phi(q) \frac {e^{-\Phi(q)(u-b^*)} - 1} {\Phi(q)} \xrightarrow{\delta \uparrow \infty} q (b^*-u),
\end{align*}
and hence
\begin{align*}
\delta \int_x^{b^*} (e^{-\Phi(q)(u-b^*)} - 1) \mathbb{W}^{(q)}(u-x)\diff u \xrightarrow{\delta \uparrow \infty} q \int_x^{b^*}(b^*-u) \mathbb{W}^{(q)}(u-x)\diff u = q  \int_0^{b^*-x} \overline{\mathbb{W}}^{(q)} (u) \diff u
\end{align*}
where the last equality holds by integration by parts.
Summing up these limits, the convergence is confirmed.
\end{remark}

\subsection{Verification} 

 We now show the optimality of the selected refraction-reflection strategy.
\begin{theorem}  \label{theorem_dividend}The refraction-reflection strategy with the refraction level $b^*$ and the reflection level $0$ is optimal and the value function is given by $v(x) = v^{b^*}(x)$ for all $0 \leq x < \infty$.
\end{theorem}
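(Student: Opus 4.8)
The plan is to prove optimality via a standard verification argument adapted to the absolutely continuous setting, following the outline indicated after the statement (cf.\ \cite{HPY2015,KLP}). First I would state and prove a \emph{verification lemma}: if $w: [0,\infty) \to \R$ is sufficiently smooth (in the sense defined before Lemma \ref{smooth_fit}), and satisfies $(\mathcal{L} - q) w(x) + \sup_{0 \le \ell \le \delta} \ell (1 - w'(x)) \le 0$ for all $x > 0$ together with the capital-injection condition $w'(x) \ge \beta$ for all $x \ge 0$ (with equality-type boundary behaviour at $0$ accounting for the reflection), where $\mathcal{L}$ is the infinitesimal generator of $Y$, then $w \ge v_\pi$ for every admissible $\pi$, and hence $w \ge v$. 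The proof of this lemma is the usual one: apply the It\^o--Meyer / change-of-variables formula to $e^{-qt} w(V_t^\pi)$ along an admissible strategy, use the sufficient smoothness to justify the formula (the $C^1$ requirement in the bounded-variation case and $C^2$ in the unbounded-variation case is exactly what makes the stochastic calculus legitimate — this is why Lemma \ref{smooth_fit} was proved), bound the drift and jump terms using the HJB inequality, bound the $\diff L^\pi$ term using $w' \le 1$ wherever dividends are paid, bound the $\diff R^\pi$ term using $w' \ge \beta$, localize with a sequence of stopping times, and take expectations and limits (dominated/monotone convergence) to conclude.

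Second, I would verify that the candidate $w := v^{b^*}$ from \eqref{value_function} actually satisfies the hypotheses of the verification lemma. Sufficient smoothness is Lemma \ref{smooth_fit}. That $v^{b^*}$ solves $(\mathcal{L}-q) v^{b^*} = 0$ on $(0,b^*)$ follows because on $[0,b^*)$ the process $V$ behaves as the reflected L\'evy process $U$ and $v^{b^*}$ is built from scale functions of $Y$; that $(\mathcal{L}-q)v^{b^*} - \delta (v^{b^*\prime} - 1) = 0$ on $(b^*,\infty)$ follows because there $V$ behaves as $X = Y - \delta t$ and $v^{b^*}$ restricted to that region is (up to constants) $e^{-\Phi(q)x}$, which is exactly killed by $(\mathcal{L}_X - q)$. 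For the HJB \emph{inequality} off the equality regions, I need: (a) $v^{b^*\prime}(x) \le 1$ for $x \ge b^*$ so that the supremum over $\ell$ is attained at $\ell=\delta$ there, and $v^{b^*\prime}(x) \ge 1$ — actually $\le 1$ is what we want; let me restate: I need $v^{b^*\prime} \le 1$ on $(b^*,\infty)$ and $v^{b^*\prime} \ge 1$ on $(0,b^*)$ wait, the correct pair of inequalities is $v^{b^*\prime} \ge 1$ on $[0,b^*]$ (so that not paying dividends is optimal, i.e.\ $\sup_\ell \ell(1-w') = 0$) and $v^{b^*\prime} \le 1$ on $[b^*,\infty)$ (so that paying at the maximal rate is optimal); and (b) $v^{b^*\prime} \ge \beta$ everywhere. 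For (b) one computes $v^{b^*\prime}(0)$ from \eqref{derivative_vf} or \eqref{v_b_derivative_left}, uses \eqref{the_condition}, and shows $v^{b^*\prime}$ is monotone appropriately; the smooth-fit condition \eqref{the_condition} is precisely engineered so that $v^{b^*\prime}(b^*) = 1 < \beta$... I must be careful here — actually the value of $v^{b^*\prime}$ at $b^*$ and its relation to $\beta$ needs checking, and the correct statement is likely $v^{b^*\prime}(x) \downarrow$ something $\ge \beta$; this monotonicity/comparison is the technical heart of the verification of the candidate.

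The main obstacle I expect is exactly this last point: establishing the two-sided inequalities $1 \le v^{b^*\prime}(x)$ for $x \le b^*$ and $v^{b^*\prime}(x) \le 1$ for $x \ge b^*$, together with $v^{b^*\prime} \ge \beta$, and verifying $(\mathcal{L}-q)v^{b^*} \le 0$ off the two regions where it vanishes. For $x > b^*$ this amounts to showing $e^{-\Phi(q)x}$-type expressions behave correctly, which is easy; for $x < b^*$, one differentiates \eqref{derivative_vf} once more and must control the sign of a combination of $\mathbb{W}^{(q)}$, $\mathbb{W}^{(q)\prime}$ and $\hat r^{(q)\prime}$, using the defining equation $f(b^*) = 0$ from \eqref{the_condition2} and the log-convexity / monotonicity properties of scale functions (as in \cite{KLP}); here the completely-monotone-type assumptions used in the spectrally negative literature are replaced in this dual setting by the structural fact that $v^{b^*}$ is concave on $[0,b^*]$, which one proves directly from the scale-function representation. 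I would organize the write-up as: (1) statement and proof of the verification lemma via It\^o, (2) verification that $v^{b^*}$ satisfies the HJB variational inequality and the bound $v^{b^*\prime}\ge\beta$, invoking Lemma \ref{smooth_fit}, Corollaries \ref{corollary_L} and \ref{cispp_infty}, and \eqref{the_condition}, (3) conclude $v^{b^*} \ge v \ge v_{\pi^{b^*}} = v^{b^*}$, hence equality and optimality of $\pi^{b^*}$.
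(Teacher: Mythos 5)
Your overall architecture is the same as the paper's: a verification lemma proved by applying the Meyer--It\^o formula to $e^{-qt}w(V_t^\pi)$, localizing, and passing to the limit, followed by a check that the candidate $v^{b^*}$ satisfies the variational inequality, with the concavity of $v^{b^*}$ on $[0,b^*]$ (proved directly from the scale-function representation, no complete-monotonicity hypothesis needed) doing the work for the derivative bounds. However, there is a genuine error in the capital-injection condition: you require $w'(x)\geq\beta$, and you carry this through to the verification step (``the bound $v^{b^*\prime}\geq\beta$''). The correct condition is $w'(x)\leq\beta$. In the It\^o expansion the injection term enters as $+\int e^{-qs}w'(V^\pi_{s-})\,\diff R^\pi_s$ on the right-hand side, so after rearranging one needs $\int w'\,\diff R^\pi\leq\beta\int \diff R^\pi$ to conclude $w(x)\geq \int e^{-qs}\ell^\pi_s\,\diff s-\beta\int e^{-qs}\,\diff R^\pi_s$; with $w'\geq\beta$ the inequality points the wrong way and the argument collapses. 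Moreover the candidate simply does not satisfy $v^{b^*\prime}\geq\beta$: since $v^{b^*}$ is concave with $v^{b^*\prime}(0)=\beta$ (this is what the smooth-fit condition \eqref{the_condition2} buys at the reflection boundary, see Remark \ref{rem_ver}) and $v^{b^*\prime}(x)=e^{-\Phi(q)(x-b^*)}\leq 1<\beta$ for $x\geq b^*$, the derivative decreases from $\beta$ to values below $1$. The correct two-sided statements are $1\leq v^{b^*\prime}\leq\beta$ on $[0,b^*]$ and $v^{b^*\prime}\leq 1$ on $[b^*,\infty)$, exactly as in Lemma \ref{ver_2}.

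Two smaller points. First, your dividend-rate inequalities are stated correctly only after the mid-sentence self-correction ($v^{b^*\prime}\geq 1$ below $b^*$ so that $\sup_{0\le r\le\delta}r(1-w')=0$ there, $v^{b^*\prime}\leq 1$ above $b^*$ so that $r=\delta$ attains the supremum); make sure the final write-up keeps that orientation. Second, your limit passage ``dominated/monotone convergence to conclude'' silently needs a lower bound $\inf_{x\geq0}v^{b^*}(x)>-\infty$ to control the terminal term $e^{-q(t\wedge T_n)}w(V^\pi_{t\wedge T_n})$; the paper includes this as an explicit hypothesis of the verification lemma and checks it in Remark \ref{lemma_bound}, and you should too.
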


In order to show Theorem \ref{theorem_dividend}, we shall provide the following verification lemma and show that $v^{b^*}$ satisfies the stated conditions.

Following Proposition 4 (ii) in \cite{APP2007}, we extend the domain of the function $v_\pi$, for all $\pi \in \mathcal{A}$, as in \eqref{v_pi}, to all $\mathbb{R}$ by setting $v_\pi(x):=v_\pi(0)+\beta x$ for $x < 0$.
We let $\mathcal{L}_Y$ and $\mathcal{L}_X$ be the operators acting on sufficiently smooth functions $g$, defined by
\begin{equation*}
\begin{split}
\mathcal{L}_Y g(x)&:= -\gamma_Y g'(x)+\frac{\sigma^2}{2}g''(x) +\int_{(0,\infty)}[g(x + z)-g(x)-g'(x)z\mathbf{1}_{\{0<z<1\}}]\Pi(\mathrm{d}z), \\
\mathcal{L}_X g(x)&:= -(\gamma_Y+ \delta) g'(x)+\frac{\sigma^2}{2}g''(x) +\int_{(0,\infty)}[g(x+z)-g(x) -g'(x)z\mathbf{1}_{\{0<z<1\}}]\Pi(\mathrm{d}z).
\end{split}
\end{equation*}

\begin{lemma}[Verification lemma]
\label{verificationlemma}
Suppose $\hat{\pi}$ is an admissible dividend strategy such that $v_{\hat{\pi}}$ is sufficiently smooth on $(0,\infty)$, continuously differentiable 
at zero, and satisfies
\begin{align}
\label{HJB-inequality}
\sup_{0\leq r\leq\delta} \big((\mathcal{L}_Y - q)v_{\hat{\pi}}(x)-rv'_{\hat{\pi}}(x)+r \big) &\leq 0, \quad x > 0, \notag\\
v'_{\hat{\pi}}(x)&\leq\beta, \quad x > 0, \\
\inf_{x\geq 0}v_{\pi}(x) &> -m,\qquad\text{for some $m>0$.}\notag
\end{align} 
Then $v_{\hat{\pi}}(x)=v(x)$ for all $x\geq0$ and hence $\hat{\pi}$ is an optimal strategy.
\end{lemma}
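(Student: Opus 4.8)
The plan is the classical verification argument based on the change-of-variables (Itô) formula. Since $\hat\pi\in\mathcal{A}$, it suffices to prove $v_{\hat\pi}(x)\ge v_\pi(x)$ for every admissible $\pi=(L^\pi,R^\pi)$ and every $x\ge0$: this gives $v_{\hat\pi}\ge v$ on $[0,\infty)$, while $v_{\hat\pi}\le v$ holds by definition, so $v_{\hat\pi}=v$ and $\hat\pi$ attains the supremum. Fix such $\pi$ and $x$; write $V:=V^\pi$ with $L^\pi_t=\int_0^t\ell^\pi_s\,\diff s$, $\ell^\pi$ valued in $[0,\delta]$, and recall $V_t\ge0$, hence $V_{s-}\ge0$, for all $t$. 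I first record that the extended $v_{\hat\pi}$ lies in $C^1(\R)$ (it is $C^1$ on $(0,\infty)$ by sufficient smoothness, affine with slope $\beta$ on $(-\infty,0)$, and differentiable at $0$ by hypothesis, which forces $v'_{\hat\pi}(0)=\beta$), and that $v'_{\hat\pi}\le\beta$ on all of $\R$ by the gradient condition; in the unbounded-variation case $v_{\hat\pi}$ is moreover $C^2$ on $\R\setminus\{0\}$.

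Next I would apply the change-of-variables formula to $e^{-qt}v_{\hat\pi}(V_t)$. When $Y$ is of bounded variation, $V$ has finite-variation paths and $v_{\hat\pi}\in C^1$, so the Lebesgue--Stieltjes chain rule applies; when $Y$ is of unbounded variation I would invoke the Meyer--Itô formula, noting that no local-time term at $0$ appears because $v'_{\hat\pi}$ is continuous there (the same steps as in \cite{APP2007,KLP,HPY2015,YP20015}). Grouping the L\'evy dynamics of $Y$ into $\mathcal{L}_Y$ and the discount into $-q$, writing the absolutely continuous dividend part as $-\ell^\pi_sv'_{\hat\pi}(V_{s-})\,\diff s$, and bounding the capital-injection contributions through $v_{\hat\pi}(V_{s-}+\Delta R^\pi_s)-v_{\hat\pi}(V_{s-})\le\beta\,\Delta R^\pi_s$ and $v'_{\hat\pi}(V_{s-})\le\beta$ (both by the gradient condition), one arrives, with a local martingale $M$ (the compensated jump integral of $Y$ against $v'_{\hat\pi}(V_{-})$ plus the Brownian integral when $\sigma>0$), at
\begin{align*}
e^{-qt}v_{\hat\pi}(V_t)\le v_{\hat\pi}(x)+\int_0^t e^{-qs}\big((\mathcal{L}_Y-q)v_{\hat\pi}(V_{s-})-\ell^\pi_sv'_{\hat\pi}(V_{s-})\big)\,\diff s+\beta\int_{[0,t]}e^{-qs}\,\diff R^\pi_s+M_t.
\end{align*}
The HJB inequality, evaluated at $r=\ell^\pi_s\in[0,\delta]$, gives $(\mathcal{L}_Y-q)v_{\hat\pi}(V_{s-})-\ell^\pi_sv'_{\hat\pi}(V_{s-})\le-\ell^\pi_s$, so rearranging,
\begin{align*}
v_{\hat\pi}(x)\ge e^{-qt}v_{\hat\pi}(V_t)+\int_0^t e^{-qs}\ell^\pi_s\,\diff s-\beta\int_{[0,t]}e^{-qs}\,\diff R^\pi_s-M_t.
\end{align*}

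Finally I would localize and pass to the limit. Choosing $T_n\uparrow\infty$ reducing $M$, replacing $t$ by $t\wedge T_n$, taking $\E_x$ and using $\E_x[M_{t\wedge T_n}]=0$, then sending $n\to\infty$ (monotone convergence on the two integral terms, Fatou with the lower bound $v_{\hat\pi}\ge-m$ on $[0,\infty)$ from the third hypothesis on the boundary term), and finally sending $t\to\infty$ (with $\E_x[e^{-qt}v_{\hat\pi}(V_t)]\ge-m e^{-qt}\to0$ and monotone convergence on the remaining terms, the claim being trivial if $\E_x[\int_{[0,\infty)}e^{-qs}\,\diff R^\pi_s]=+\infty$ since then $v_\pi(x)=-\infty$), yields $v_{\hat\pi}(x)\ge\E_x[\int_0^\infty e^{-qs}\ell^\pi_s\,\diff s]-\beta\,\E_x[\int_{[0,\infty)}e^{-qs}\,\diff R^\pi_s]=v_\pi(x)$, as required. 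The main obstacle I anticipate is not the algebra but the rigorous justification of the change-of-variables step: verifying that $M$ is genuinely a local martingale (local integrability of the compensated jump integral of $Y$ against $v'_{\hat\pi}(V_{-})$) and that the local-time term at $0$ truly drops out, which must be handled separately in the bounded- and unbounded-variation regimes; once this is in place, the localization and the interchange of limits are routine given the lower-bound hypothesis.
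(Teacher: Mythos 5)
Your proposal is correct and follows essentially the same route as the paper's proof: extend $v_{\hat\pi}$ affinely with slope $\beta$ below zero, apply the Meyer--It\^o/change-of-variables formula to $e^{-qt}v_{\hat\pi}(V^\pi_t)$ stopped at $T_n=\inf\{t:V^\pi_t>n\}$, bound the capital-injection terms by $\beta\int e^{-qs}\,\diff R^\pi_s$ via the gradient condition, invoke the HJB inequality at $r=\ell^\pi_s$, use the compensation formula to kill the martingale term, and pass to the limit with monotone convergence and the lower bound $-m$ on the terminal term (discarding strategies with $\E_x\int e^{-qs}\,\diff R^\pi_s=\infty$ at the outset). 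The only cosmetic difference is your generic localizing sequence versus the paper's explicit choice of $T_n$, which serves the same purpose of making $M_{\cdot\wedge T_n}$ a true zero-mean martingale.
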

\begin{proof}
See Appendix \ref{section_proof}.
\end{proof}

In the rest, we shall show that our candidate value function $v^{b^*}$ satisfies the sufficient conditions \eqref{HJB-inequality}.

\begin{lemma}\label{ver_1} (i) For $x > b^*$, we have $(\mathcal{L}_X - q)v^{b^*}(x)  + \delta = 0$.

 (ii) For $x < b^*$, we have $(\mathcal{L}_Y - q)v^{b^*}(x)  = 0$.
\end{lemma}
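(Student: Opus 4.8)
The plan is to exploit the structure of $v^{b^*}$ as given in \eqref{value_function}, together with known fluctuation identities for $Y$ and $X$ and their reflected versions, rather than computing the integro-differential operators directly from the L\'evy--Khintchine form. For part (ii), on $(0,b^*)$ the candidate coincides with the expected NPV $v^b$ of the refraction-reflection strategy, and on this region the controlled process behaves exactly like the reflected process $U$ (no dividends are paid below $b^*$); heuristically $v^{b^*}$ should therefore solve $(\mathcal{L}_Y - q) v^{b^*} = 0$ on $(0,b^*)$ because it is a harmonic-type average of the underlying process killed at rate $q$ and reflected at $0$. To make this rigorous I would recall from \eqref{value_function} that on $x < b^*$
\begin{align*}
v^{b^*}(x) = \delta \frac{\mathbb{Z}^{(q)}(b^*-x)}{q} - \frac{e^{\Phi(q)b^*}}{\Phi(q)} \hat{r}^{(q)}(b^*-x),
\end{align*}
and then use the fact that $\mathbb{Z}^{(q)}$, $\mathbb{W}^{(q)}$ and the exponential $e^{-\Phi(q)\cdot}$ are the standard building blocks annihilated (up to the known correction terms) by $(\mathcal{L}_Y - q)$: specifically $(\mathcal{L}_Y - q)\mathbb{Z}^{(q)}(b^*-\cdot) = 0$ on $(0,b^*)$, $(\mathcal{L}_Y - q)\mathbb{W}^{(q)}(b^*-\cdot) = 0$ on $(0,b^*)$, and $(\mathcal{L}_Y - q) e^{-\Phi(q)(\cdot - b^*)} = (\psi_Y(\Phi(q)) - q) e^{-\Phi(q)(\cdot-b^*)} = -\delta\Phi(q) e^{-\Phi(q)(\cdot-b^*)}$ since $\psi_X(\Phi(q)) = \psi_Y(\Phi(q)) + \delta\Phi(q) = q$. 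Expanding $\hat{r}^{(q)}(b^*-x)$ via its definition \eqref{tilde_r_q} as $e^{-\Phi(q)(x-b^*)} + \delta\Phi(q)\int_x^{b^*} e^{-\Phi(q)(u-b^*)}\mathbb{W}^{(q)}(u-x)\diff u$ and applying $\mathcal{L}_Y$ termwise (with care about the boundary term at $u = x$ produced by differentiating the integral, which is where the drift/jump parts of $\mathcal{L}_Y$ interact with $\mathbb{W}^{(q)}(0)$), the $\mathbb{Z}^{(q)}$ contribution $\delta\mathbb{Z}^{(q)}/q$ produces a residual of exactly $-\delta$, which is cancelled by the residual $+\delta$ coming from the $e^{-\Phi(q)\cdot}$ piece; the remaining pieces cancel in pairs. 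The cleanest way to organize this is to first prove (i) and then deduce (ii), or vice versa, since $\mathcal{L}_X g = \mathcal{L}_Y g - \delta g'$, so the two statements differ by $\delta v^{b^*\prime}$.

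For part (i), on $x > b^*$ the candidate is $v^{b^*}(x) = \delta/q - (e^{\Phi(q)b^*}/\Phi(q)) e^{-\Phi(q)(x-b^*)} = \delta/q - e^{-\Phi(q)(x-b^*)}/\Phi(q)\cdot e^{\Phi(q)b^*}$; more transparently from \eqref{v_b_star_rewrite} one has, for $x > b^*$, $v^{b^*}(x) = (\delta - q/\Phi(q))/q + (1 - e^{-\Phi(q)(x-b^*)})/\Phi(q)$, a constant plus a multiple of the $\Phi(q)$-harmonic exponential for $X$. Then $(\mathcal{L}_X - q)$ applied to the constant $\delta/q$ gives $-\delta$, and $(\mathcal{L}_X - q)$ applied to $e^{-\Phi(q)(\cdot - b^*)}$ gives $(\psi_X(\Phi(q)) - q)e^{-\Phi(q)(\cdot-b^*)} = 0$ by definition of $\Phi(q)$; hence $(\mathcal{L}_X - q)v^{b^*}(x) = -\delta$ and $(\mathcal{L}_X - q)v^{b^*}(x) + \delta = 0$ as claimed. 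The only subtlety is that $\mathcal{L}_X$ involves the integral $\int_{(0,\infty)}[v^{b^*}(x+z) - \cdots]\Pi(\diff z)$, which for $x > b^*$ samples $v^{b^*}$ only at points $\geq x > b^*$, so the explicit formula on $(b^*,\infty)$ suffices and no matching with the region below $b^*$ enters; the exponential $e^{-\Phi(q)(\cdot - b^*)}$ is exactly the function making this integral term collapse into the algebraic identity $\psi_X(\Phi(q)) = q$.

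The main obstacle I anticipate is bookkeeping in part (ii): the integral term $\int_{(0,\infty)}[v^{b^*}(x+z) - v^{b^*}(x) - v^{b^*\prime}(x)z\mathbf{1}_{\{z<1\}}]\Pi(\diff z)$ samples $v^{b^*}$ across the threshold $b^*$ (for $z > b^*-x$ the argument $x+z$ lies in the upper regime with a different closed form), so one cannot simply apply the termwise harmonicity relations globally. The standard remedy, which I would follow as in \cite{KL} and \cite{YP20015}, is to reduce to the bounded-variation case first (where $v^{b^*}$ is only $C^1$ and the operator has no second-derivative term, so the identity is an equality of continuous functions that one verifies by Laplace-transform or by the explicit structure), using the continuity/smoothness established in Lemma \ref{smooth_fit}; one can alternatively invoke the probabilistic representation directly, writing $v^{b^*}(x) = v^b(x)$ for $b = b^*$ and applying the Dynkin-type identity valid for the reflected process $U$ up to $\eta_b^+$, so that the generator equation on $(0,b^*)$ is a consequence of the martingale property of the stopped discounted process together with the resolvent/NPV formulas of Corollaries \ref{corollary_L} and \ref{cispp_infty}. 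Either route reduces the lemma to an algebraic verification using $\psi_X(\Phi(q)) = q$ and the scale-function relation \eqref{RLqp}, and I would present the computational route since the explicit form \eqref{value_function} is already in hand.
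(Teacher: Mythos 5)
Your overall strategy is the same as the paper's: apply $(\mathcal{L}_X-q)$, resp.\ $(\mathcal{L}_Y-q)$, termwise to the explicit representation \eqref{value_function}, using $\psi_X(\Phi(q))=q$ and the harmonicity of the scale-function building blocks. Part (i) is correct and complete as you present it: $(\mathcal{L}_X-q)$ annihilates $e^{-\Phi(q)(\cdot-b^*)}$, sends the constant $\delta/q$ to $-\delta$, and for $x>b^*$ the jump integral only samples the upper-regime formula \eqref{v_b_star_above}, so no matching issue arises.

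Part (ii), however, has a concrete gap together with an internal inconsistency. The cancellation you describe --- that ``the $\mathbb{Z}^{(q)}$ contribution $\delta\mathbb{Z}^{(q)}/q$ produces a residual of exactly $-\delta$, which is cancelled by the residual $+\delta$ coming from the $e^{-\Phi(q)\cdot}$ piece'' --- contradicts the identity $(\mathcal{L}_Y-q)\mathbb{Z}^{(q)}(b^*-\cdot)=0$ that you correctly state two sentences earlier: the $\mathbb{Z}^{(q)}$ term contributes nothing, and what it would contribute if it were a constant is not the right object anyway. The true cancellation is between the exponential piece, whose residual is $-\tfrac{1}{\Phi(q)}\bigl(\psi_Y(\Phi(q))-q\bigr)e^{-\Phi(q)(x-b^*)}=\delta\, e^{-\Phi(q)(x-b^*)}$, and the convolution piece, for which one needs
\begin{align*}
(\mathcal{L}_Y-q)\Big(\int_x^{b^*}e^{-\Phi(q)(u-b^*)}\,\mathbb{W}^{(q)}(u-x)\,\diff u\Big)= e^{-\Phi(q)(x-b^*)},\qquad 0<x<b^*;
\end{align*}
this is precisely the nontrivial ingredient the paper imports from the proof of Lemma 4.5 of \cite{Yamazaki2013}, and your sketch never pins it down (you only gesture at ``care about the boundary term at $u=x$''). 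Without it, part (ii) is not proved. Separately, your worry about the jump integral sampling $v^{b^*}$ across the threshold, and the proposed detours (reduction to bounded variation, a Dynkin/martingale argument), are unnecessary: the representation \eqref{value_function} holds for \emph{all} $x\ge 0$, since for $x>b^*$ the convolution integral vanishes and $\mathbb{Z}^{(q)}(b^*-x)=1$, recovering \eqref{v_b_star_above}; hence the three building-block identities, each valid for the globally defined functions, may simply be summed on $(0,b^*)$.
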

\begin{proof}
(i) For $x > b^*$, we have 
\begin{align}
 v^{b^*}(x)  &=  \frac \delta q -    \frac {e^{-\Phi(q)(x-b^*)}} {\Phi(q)}. \label{v_b_star_above}
\end{align}
Hence direct computation shows (i).

(ii) We know that $(\mathcal{L}_Y -q) \mathbb{Z}^{(q)}(b^*-x) = 0$ (see for instance the proof of Theorem 2.1 in \cite{BKY}).  In addition,
\begin{align*}
(\mathcal{L}_Y -q)e^{-\Phi(q)(x-b^*)} =(\mathcal{L}_X -q) (e^{-\Phi(q)(x-b^*)}) + \delta \Phi(q) e^{-\Phi(q)(x-b^*)} =\delta \Phi(q) e^{-\Phi(q)(x-b^*)}.
\end{align*}
Finally, by the proof of Lemma 4.5 of \cite{Yamazaki2013},
\begin{align*}
(\mathcal{L}_Y -q) \Big( \int_x^{b^*}e^{-\Phi(q)(u-b^*)}\mathbb{W}^{(q)}(u-x)\diff u  \Big) = - e^{-\Phi(q)(x-b^*)}.
\end{align*}
Summing up these, we have the claim.
\end{proof}

\begin{remark}\label{rem_ver} The function $v^{b^*}$ is continuously differentiable at zero. To see this, by  the choice of $b^*$ that satisfies \eqref{the_condition2}, the following holds:
\begin{align*}
v^{b^* \prime}_+(0)
&= e^{\Phi(q) b^*}\Big(  1  + \delta \Phi(q) \int_0^{b^*}e^{-\Phi(q)z}\mathbb{W}^{(q) }(z)\diff z \Big) = \beta = v^{b^* \prime}_-(0).
\end{align*}
\end{remark}

\begin{lemma}\label{ver_2} 
\begin{itemize}
\item[(i)] For $x \geq b^*$, we have $v^{b^* \prime}(x)  \leq 1 < \beta$. 
 \item[(ii)] For $0 \leq x \leq b^*$, we have $1 \leq v^{b^* \prime}(x)  \leq \beta$. 
 \end{itemize}
\end{lemma}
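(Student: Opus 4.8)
The plan is to work directly with the explicit formula \eqref{derivative_vf} for $v^{b^* \prime}$ on $(0,b^*)$ and the formula $v^{b^*\prime}(x) = \Phi(q) e^{-\Phi(q)x}(\delta \mathbb{W}^{(q)}(b^*)+\beta)/\hat r^{(q)\prime}(b^*)$ on $(b^*,\infty)$ (which follows from the computation preceding \eqref{v_b_derivative_right}), simplified via the smooth-fit identity $(\delta \mathbb{W}^{(q)}(b^*)+\beta)/\hat r^{(q)\prime}(b^*) = e^{\Phi(q)b^*}/\Phi(q)$ that holds by the choice of $b^*$. For part (i), with this substitution the above gives $v^{b^*\prime}(x) = e^{-\Phi(q)(x-b^*)}$ for $x \geq b^*$, which is clearly at most $1$ (since $\Phi(q)>0$ and $x \geq b^*$) and strictly less than $\beta$ because $\beta > 1$; this also recovers the boundary value $v^{b^*\prime}_+(b^*) = 1$.

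For part (ii), substituting the smooth-fit identity into \eqref{derivative_vf} yields, for $0 < x < b^*$,
\begin{align*}
v^{b^*\prime}(x) = -\delta \mathbb{W}^{(q)}(b^*-x) + \frac{e^{\Phi(q)b^*}}{\Phi(q)} \hat r^{(q)\prime}(b^*-x).
\end{align*}
Using the first line of \eqref{r_tilde_derivative} for $\hat r^{(q)\prime}(b^*-x)$, namely $\hat r^{(q)\prime}(b^*-x) = \Phi(q)(e^{-\Phi(q)x}(1+\delta\mathbb{W}(0)) + \delta \int_x^{b^*} e^{-\Phi(q)u}\mathbb{W}^{(q)\prime}_+(u-x)\,\diff u)$, the $\Phi(q)$ cancels and, after an integration by parts transferring the derivative off $\mathbb{W}^{(q)}$, the two terms combine. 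The cleanest route is the form already displayed in \eqref{value_function}: $v^{b^*}(x) = \delta \mathbb{Z}^{(q)}(b^*-x)/q - (e^{-\Phi(q)(x-b^*)}/\Phi(q) + \delta\int_x^{b^*}e^{-\Phi(q)(u-b^*)}\mathbb{W}^{(q)}(u-x)\,\diff u)$, so that differentiating term by term (the derivative of the integral picks up $-e^{-\Phi(q)(b^*-b^*)}\mathbb{W}^{(q)}(0)\cdot$ boundary terms plus an interior part) gives an expression I can rearrange into
\begin{align*}
v^{b^*\prime}(x) = 1 + \delta \int_x^{b^*}(1 - e^{-\Phi(q)(u-b^*)})\,(\text{something nonnegative})\,\diff u \quad \text{or similar},
\end{align*}
exploiting that $1 - e^{-\Phi(q)(u-b^*)} \leq 0$ for $u \leq b^*$. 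More directly: differentiate \eqref{v_b_star_rewrite}, $v^{b^*}(x) = \tfrac1q(\delta - q/\Phi(q)) + (1-e^{-\Phi(q)(x-b^*)})/\Phi(q) - \delta\int_x^{b^*}(e^{-\Phi(q)(u-b^*)}-1)\mathbb{W}^{(q)}(u-x)\,\diff u$, obtaining
\begin{align*}
v^{b^*\prime}(x) = e^{-\Phi(q)(x-b^*)} + \delta(e^{-\Phi(q)(x-b^*)}-1)\mathbb{W}^{(q)}(0) - \delta \int_x^{b^*}(e^{-\Phi(q)(u-b^*)}-1)\mathbb{W}^{(q)\prime}_+(u-x)\,\diff u.
\end{align*}
Now $e^{-\Phi(q)(x-b^*)} \geq 1$ for $x \leq b^*$; the second term is $\geq 0$ since $e^{-\Phi(q)(x-b^*)} - 1 \geq 0$ and $\mathbb{W}^{(q)}(0)\geq 0$; and in the integral $e^{-\Phi(q)(u-b^*)} - 1 \geq 0$ with $\mathbb{W}^{(q)\prime}_+ \geq 0$ (monotonicity of the scale function), so the integral subtracts a nonnegative quantity — hence I must argue the opposite sign. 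The correct bookkeeping is that this last integral equals $-\delta\int_x^{b^*}(1-e^{-\Phi(q)(u-b^*)})\mathbb{W}^{(q)\prime}_+(u-x)\,\diff u$... which is $\leq 0$, confirming $v^{b^*\prime}(x) \leq e^{-\Phi(q)(x-b^*)} + \delta(e^{-\Phi(q)(x-b^*)}-1)\mathbb{W}^{(q)}(0)$; combining with the upper bound one checks this is $\leq \beta$ using $f(b^*)=0$. For the lower bound $v^{b^*\prime}(x)\geq 1$, I group $e^{-\Phi(q)(x-b^*)} - 1 \geq 0$ against the integral and show the remainder is nonnegative, again via monotonicity of $\mathbb{W}^{(q)}$.

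The main obstacle is getting the signs in part (ii) exactly right: the expression for $v^{b^*\prime}$ on $(0,b^*)$ is a difference of two positive quantities, and establishing both $v^{b^*\prime} \geq 1$ and $v^{b^*\prime}\leq\beta$ requires a careful regrouping so that each bound reduces to the monotonicity $\mathbb{W}^{(q)\prime}_+ \geq 0$ together with the defining relation $f(b^*) = 0$ (equivalently $e^{-\Phi(q)b^*}\beta = 1 + \delta\Phi(q)\int_0^{b^*}e^{-\Phi(q)u}\mathbb{W}^{(q)}(u)\,\diff u$). I would handle the endpoints first — $v^{b^*\prime}_+(b^*) = 1$ from part (i), $v^{b^*\prime}_+(0) = \beta$ from Remark \ref{rem_ver} — and then show monotonicity of $v^{b^*\prime}$ on $(0,b^*)$, i.e. $v^{b^*\prime\prime} \leq 0$, which sandwiches the derivative between its endpoint values $1$ and $\beta$ and is likely the slickest argument; concavity of $v^{b^*}$ on $(0,b^*)$ should follow from the explicit second-derivative formula displayed just before Lemma \ref{smooth_fit} together with $\mathbb{W}^{(q)\prime} \geq 0$ and the sign of $1 + \delta\mathbb{W}(0)$ relative to the bracketed exponential-integral term.
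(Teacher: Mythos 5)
Your overall plan is the paper's: part (i) is verbatim ($v^{b^*\prime}(x)=e^{-\Phi(q)(x-b^*)}\le 1<\beta$ via the smooth-fit identity), and for part (ii) the paper does exactly what you settle on at the end — endpoint values $v^{b^*\prime}_+(0)=\beta$, $v^{b^*\prime}(b^*)=1$, plus concavity of $v^{b^*}$ on $(0,b^*)$. However, two things in your write-up need fixing. First, your differentiation of \eqref{v_b_star_rewrite} has a sign error: the term $-\delta\int_x^{b^*}(e^{-\Phi(q)(u-b^*)}-1)\mathbb{W}^{(q)}(u-x)\,\diff u$ differentiates to
\begin{align*}
+\,\delta\big(e^{-\Phi(q)(x-b^*)}-1\big)\mathbb{W}^{(q)}(0)\;+\;\delta\int_x^{b^*}\big(e^{-\Phi(q)(u-b^*)}-1\big)\mathbb{W}^{(q)\prime}_+(u-x)\,\diff u,
\end{align*}
i.e.\ the integral enters with a \emph{plus} sign. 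Since $e^{-\Phi(q)(u-b^*)}-1\ge 0$ for $u\le b^*$ and $\mathbb{W}^{(q)\prime}_+\ge0$, all three terms are nonnegative and the first is $\ge 1$, so the lower bound $v^{b^*\prime}\ge 1$ is immediate — there is no "opposite sign" to argue; the back-and-forth in your draft stems entirely from this error.

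Second, for concavity you propose to use the second-derivative display preceding Lemma \ref{smooth_fit}; that formula is derived in the paper under the unbounded-variation hypothesis (it involves $\mathbb{W}^{(q)\prime}$ pointwise, which need not exist everywhere when $Y$ has bounded variation and $\Pi$ has atoms). The paper instead first collapses \eqref{derivative_vf} using the smooth-fit identity and \eqref{r_tilde_derivative} into the product form
\begin{align*}
v^{b^*\prime}(x)=e^{-\Phi(q)(x-b^*)}\Big(1+\delta\Phi(q)\int_0^{b^*-x}e^{-\Phi(q)z}\mathbb{W}^{(q)}(z)\,\diff z\Big),
\end{align*}
in which the $\delta\mathbb{W}^{(q)}(b^*-x)$ terms cancel and no derivative of the scale function appears; differentiating this once more gives $v^{b^*\prime\prime}(x)<0$ directly and uniformly in both variation cases. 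This formula also re-derives the endpoint values instantly ($x=b^*$ gives $1$; $x=0$ gives $\beta$ by $f(b^*)=0$). I recommend adopting that simplification rather than the integration-by-parts/boundary-term bookkeeping you sketch, which is where your signs went astray.
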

\begin{proof}
(i) For $x \geq b^*$, we have $v^{b^* \prime}(x)  = e^{-\Phi(q)(x-b^*)}  \leq 1$.

(ii)   For $0 \leq x \leq b^*$, by \eqref{r_tilde_derivative} and \eqref{value_function},
\begin{align*}
v^{b^* \prime}(x)
&= - \delta   {\mathbb{W}^{(q)}(b^*-x)}  + {e^{\Phi(q) b^*}}\Big(   e^{-\Phi(q)x} +\delta\Big(e^{-\Phi(q) b^*} \mathbb{W}^{(q)} (b^*-x) + \Phi(q) \int_x^{b^*} e^{-\Phi(q)u}\mathbb{W}^{(q)}(u-x)\diff u \Big) \Big) \\
&= e^{- \Phi(q) (x-b^*)}\Big(  1  + \delta \Phi(q) \int_0^{b^*-x}e^{-\Phi(q)z}\mathbb{W}^{(q) }(z)\diff z \Big).
\end{align*}
Taking its derivative, for $0 < x < b^*$,
\begin{align*}
v^{b^* \prime \prime}(x) &=  - \Phi(q) e^{- \Phi(q) (x-b^*)}\Big(  1  + \delta \Phi(q) \int_0^{b^*-x}e^{-\Phi(q)z}\mathbb{W}^{(q) }(z)\diff z \Big) \\
&- e^{- \Phi(q) (x-b^*)}\Big(  \delta \Phi(q) e^{-\Phi(q)(b^*-x)}\mathbb{W}^{(q) }(b^*-x) \Big) < 0.
\end{align*}
This together with Lemma \ref{smooth_fit} and Remark \ref{rem_ver}  shows the claim.\\
\end{proof}
Using the previous lemma, it is straightforward to check that the function $v^{b^*}$ is bounded from below. More specifically we have the following result. 
\begin{remark} \label{lemma_bound}
The function $v^{b^*}$ is bounded from below on $[0, \infty)$: by Lemma \ref{ver_2}  (ii) and \eqref{v_b_star_above},
\begin{align*}
\inf_{x\geq0}v^{b^*}(x) \geq \inf_{x \geq b^*}v^{b^*}(x)  \wedge v^{b^*}(0)  \geq \Big( \frac \delta q - \frac 1 {\Phi(q)} \Big)\wedge v^{b^*}(0)> -\infty.
\end{align*}
\end{remark}

Using the above lemmas, we now confirm that $v^{b^*}$ satisfies \eqref{HJB-inequality}. First, proceeding like in the proof of Lemma 7 in \cite{KLP},  Lemmas \ref{ver_1} and \ref{ver_2} imply the first item of \eqref{HJB-inequality}.  The second item of \eqref{HJB-inequality} is immediate by Lemma \ref{ver_2}. Finally, the third item is shown by Remark \ref{lemma_bound}. This completes the proof of Theorem \ref{theorem_dividend}.

\section{Numerical Examples} \label{numerical_section}

We conclude the paper with numerical examples of the optimal dividend problem studied above. Here, 
we focus on the case the processes $Y$ (and $X$) have i.i.d.\ phase-type distributed jumps, where their scale functions have analytical expressions, and hence the optimal strategy and the value function can be computed instantaneously.  The class of processes of this type is important because it can approximate any spectrally one-sided \lev process (see \cite{Asmussen_2004} and \cite{Egami_Yamazaki_2010_2}).  

Let $Y$ be a spectrally positive \lev process with i.i.d.\ phase-type distributed jumps \cite{Asmussen_2004} of the form
\begin{equation}
 Y_t  - Y_0= - c_Y t+\sigma B_t + \sum_{n=1}^{N_t} Z_n, \quad 0\le t <\infty, \label{X_phase_type}
\end{equation}
for some $c_Y \in \R$ and $\sigma \geq 0$.  Here $B=( B_t; t\ge 0)$ is a standard Brownian motion, $N=(N_t; t\ge 0 )$ is a Poisson process with arrival rate $\kappa$, and  $Z = ( Z_n; n = 1,2,\ldots )$ is an i.i.d.\ sequence of phase-type-distributed random variables with representation $(m,{\bm \alpha},{\bm T})$; see \cite{Asmussen_2004}.
These processes are assumed mutually independent. The Laplace exponents \eqref{lk} of $Y$ and $X$ are then (with ${\bm t} = -\bm{T 1}$ where ${\bm 1} = [1, \ldots 1]'$)
\begin{align*}
 \psi_Y(s)   &= c_Y s + \frac 1 2 \sigma^2 s^2 + \kappa \left( {\bm \alpha} (s {\bm I} - {\bm{T}})^{-1} {\bm t} -1 \right), \\
 \psi_X(s)  &= c_X s + \frac 1 2 \sigma^2 s^2 + \kappa \left( {\bm \alpha} (s {\bm I} - {\bm{T}})^{-1} {\bm t} -1 \right),
 \end{align*}
which are analytic for every $s \in \mathbb{C}$ except at the eigenvalues of ${\bm T}$.  

Suppose  $( -\zeta_{i,q}; i \in \mathcal{I}_q )$ and $( -\xi_{i,q}; i \in \mathcal{I}_q )$ are the sets of the roots with negative real parts of the equality $\psi_Y(s) = q$ and $\psi_X(s) = q$, respectively.  We assume that the phase-type distribution is minimally represented and hence $|\mathcal{I}_q| = m+1$ when $\sigma > 0$ and $|\mathcal{I}_q| = m$ when $\sigma = 0$; see \cite{Asmussen_2004}.  As in \cite{Egami_Yamazaki_2010_2}, if these values are assumed distinct, then
the scale functions of $-Y$ and $-X$ can be written, for all $x \geq 0$,
\begin{align}
\mathbb{W}^{(q)}(x) = \frac {e^{\varphi(q) x}} {\psi_Y'(\varphi(q))} - \sum_{i \in \mathcal{I}_q} B_{i,q} e^{-\zeta_{i,q}x} \quad \textrm{and} \quad
W^{(q)}(x) = \frac {e^{\Phi(q) x}} {\psi_X'(\Phi(q))} - \sum_{i \in \mathcal{I}_q} C_{i,q} e^{-\xi_{i,q}x},
\label{scale_function_form_phase_type}
\end{align}
respectively, where
\begin{align*}
B_{i,q} := \left. \frac { s+\zeta_{i,q}} {q-\psi_Y(s)} \right|_{s = -\zeta_{i,q}} = - \frac 1 {\psi_Y'(-\zeta_{i,q})} \quad \textrm{and} \quad
C_{i,q} := \left. \frac { s+\xi_{i,q}} {q-\psi_X(s)} \right|_{s = -\xi_{i,q}} = - \frac 1 {\psi_X'(-\xi_{i,q})}.
\end{align*}


\subsection{Numerical results}  We now confirm the optimality of the refraction-reflection strategy as obtained in the last section.  In the following numerical results, unless stated otherwise, we set $q = 0.05$, $\beta = 2$, and $\delta = 1$, and, for the \lev process, $c_Y = 0.5$ (and hence $c_X = 1.5$), $\sigma = 0.2$, $\kappa = 1$,  and  the jump size phase-type distribution given by $m=6$ and 
\begin{align*}
&{\bm T} = \left[ \begin{array}{rrrrrr}-4.0488  &  0.0000   & 0.0000  &  0.0000 &   0.0000 &   0.0000 \\
    0.1320  & -4.0012 &  0.0000  &  0.0455 &   3.7040  & 0.0044 \\
    0.2367  &  0.8595   &-4.2831  &  0.1897   & 0.2918   & 2.3724 \\
    3.1532   & 0.0000   & 0.0000 &  -4.0229  &  0.0000  &  0.0000 \\
    0.2497  &  0.0000  &  0.0000  &  3.7024  & -4.0124  &  0.0000 \\
    0.0434   & 2.1947  &  0.0938  &  0.1704  &  0.1217 &  -4.9612 \end{array} \right],  \quad {\bm \alpha} =    \left[ \begin{array}{r}   0.0052 \\
    0.0659 \\
    0.7446 \\
    0.0398 \\
    0.0043 \\
    0.1403  \end{array} \right],
\end{align*}
which gives an approximation to the (absolute value of) normal random variable with mean $0$ and variance $1$; we refer the reader to  \cite{Egami_Yamazaki_2010_2} for the accuracy of the approximation.

\begin{figure}[htbp]
\begin{center}
\begin{minipage}{1.0\textwidth}
\centering
\begin{tabular}{cc}
 \includegraphics[scale=0.42]{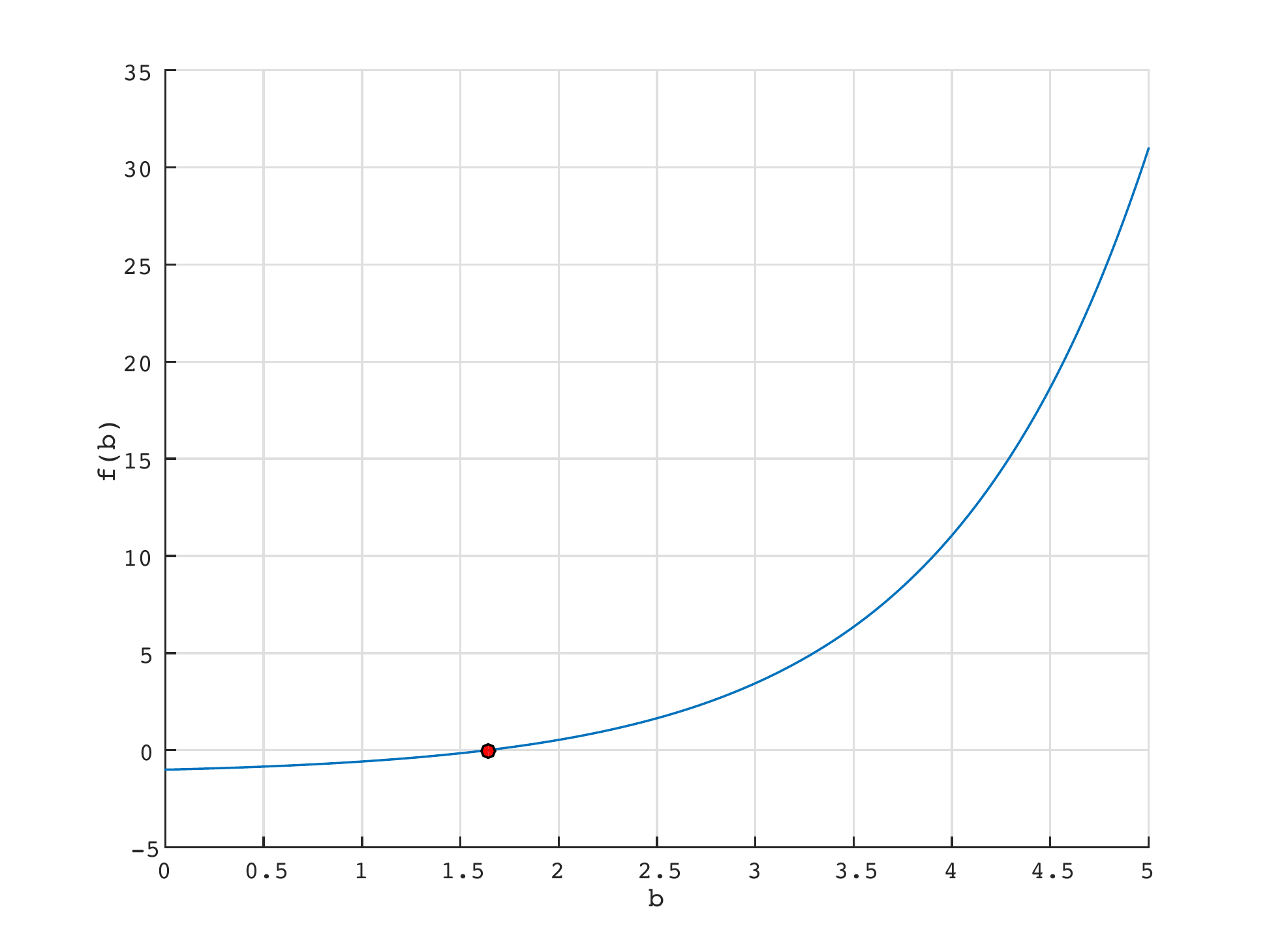} & \includegraphics[scale=0.42]{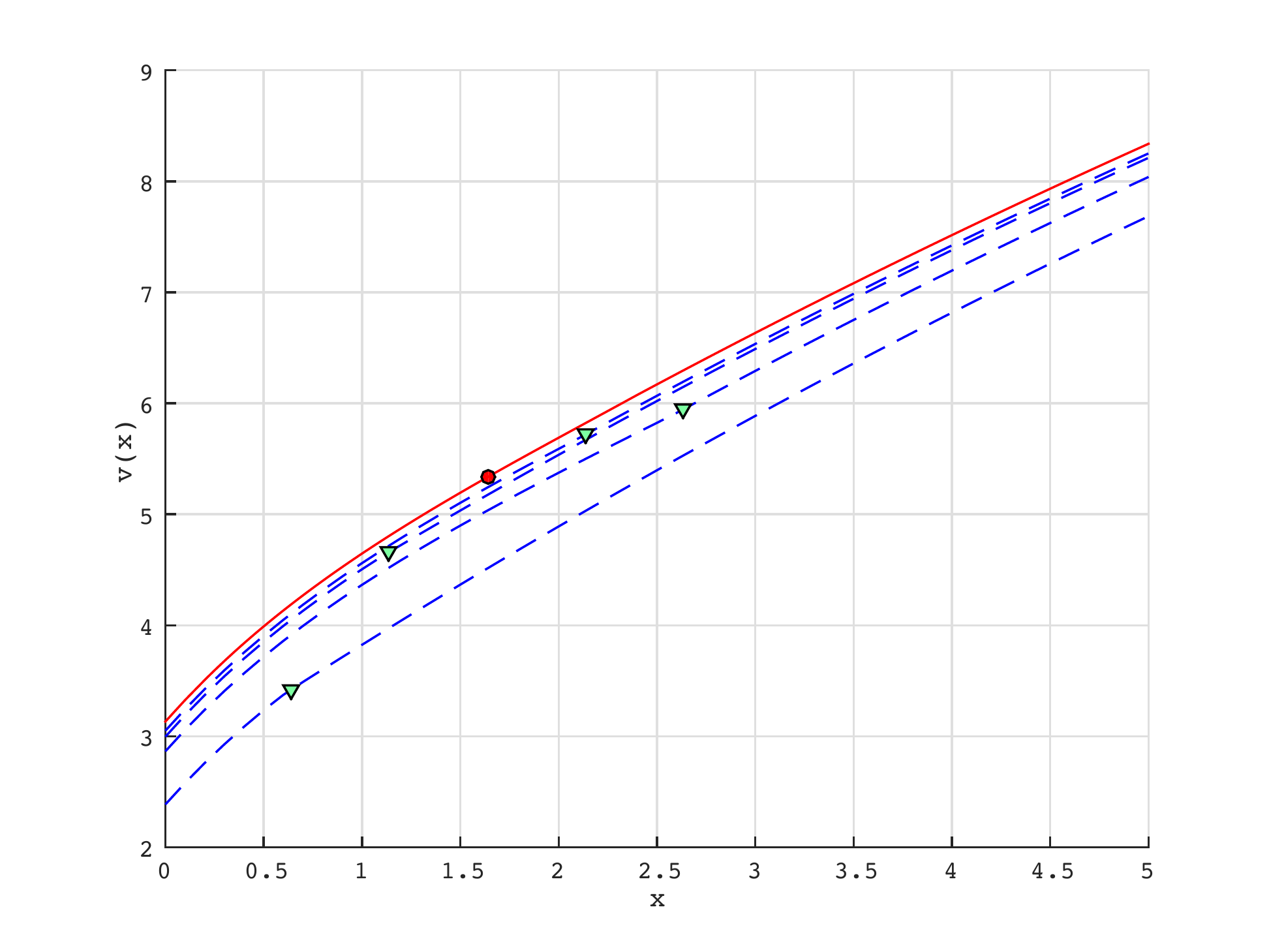}  \end{tabular}
\end{minipage}
\caption{Plots of $f(x)$ as in \eqref{f_b} (left) and the value function $v(x) = v^{b^*}(x)$ as in \eqref{value_function} (right).  For the former, the unique root $b^*$ is indicated by the circle.  For the latter, the value function $v(x)$ (solid) is shown along with $v^{b}(x)$ as in \eqref{v_b_x} (dotted) for $b \in \{b^*-1, b^*-0.5,b^*+0.5,b^*, b^*+1\}$.  The red circle shows the point $(b^*, v(b^*))$ and down-pointing triangles show the points $(b,v^{b}(b))$.
} \label{figure_caption_optimality}
\end{center}
\end{figure}

We first illustrate the computation of the optimal threshold level $b^*$ and the associated value function.  The function $f$ as in \eqref{f_b} is plotted in the left panel of Figure \ref{figure_caption_optimality}. As has been discussed, this function is confirmed to start at a negative value and increases strictly to infinity; hence its unique zero becomes the optimal refraction level $b^*$.   The right panel of Figure \ref{figure_caption_optimality} shows the value function \eqref{value_function} along with suboptimal NPVs $v^{b}(x)$, as in \eqref{v_b_x}, for $b \in \{b^*-1, b^*-0.5, b^*+0.5,b^*, b^*+1\}$. We can confirm that it dominates  the suboptimal NPVs uniformly in $x$.  In addition, as shown in Remark \ref{rem_ver} and Lemma \ref{ver_2}, it is a smooth concave function, whose slopes at $0$ and $b^*$ are $\beta$ and $1$, respectively.  

\begin{figure}[htbp]
\begin{center}
\begin{minipage}{1.0\textwidth}
\centering
\begin{tabular}{cc}
 \includegraphics[scale=0.42]{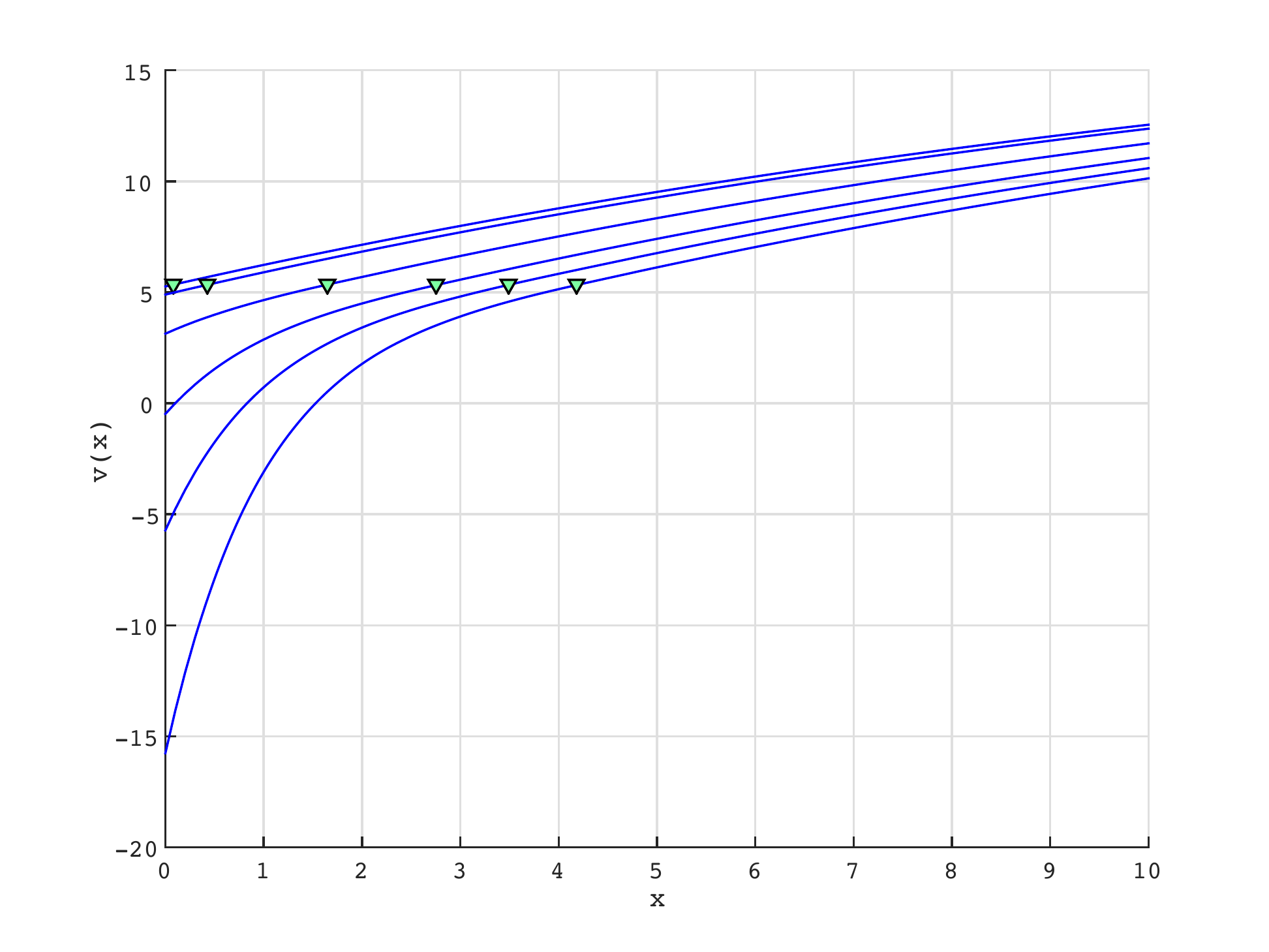} & \includegraphics[scale=0.42]{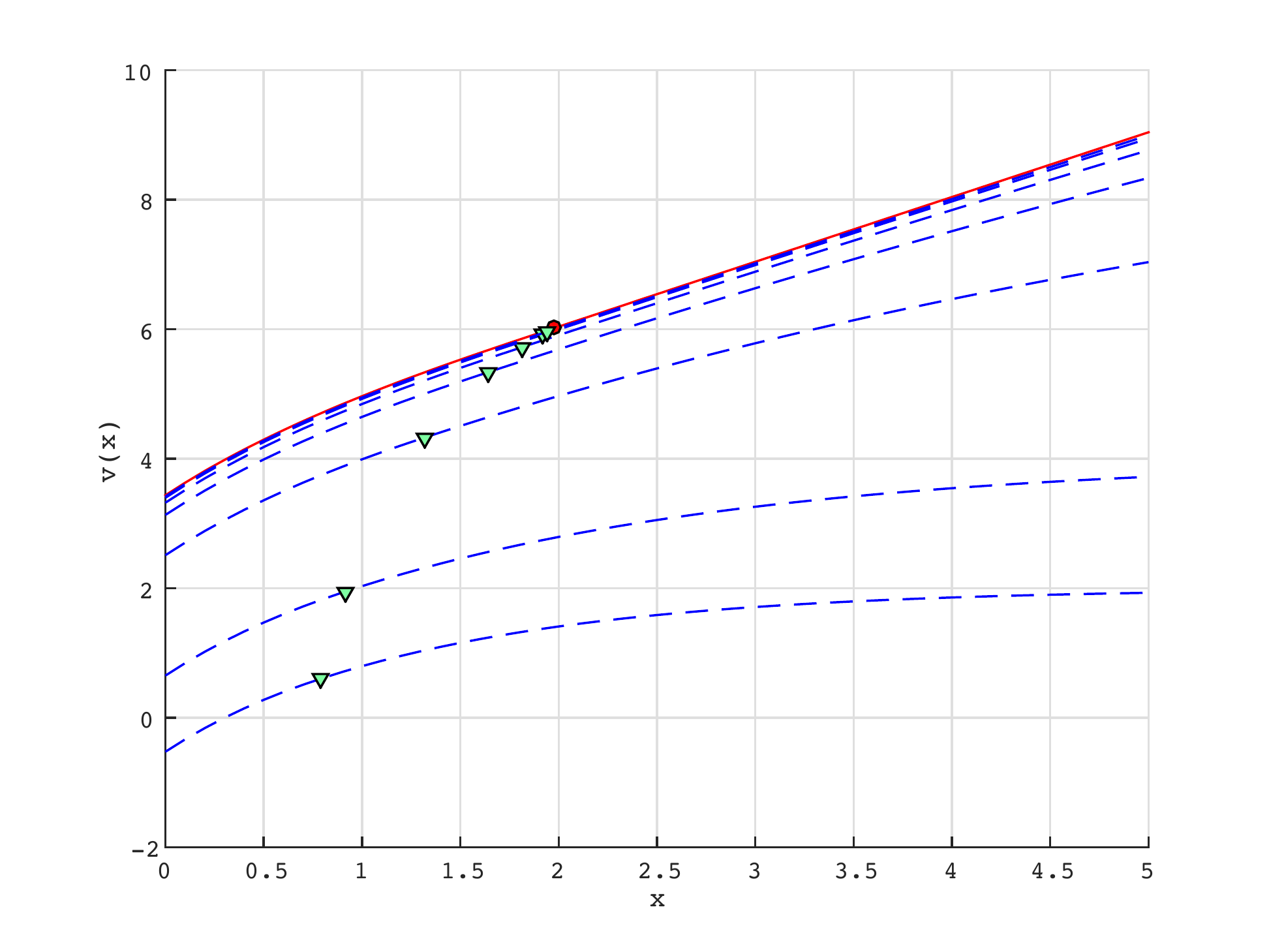}  \end{tabular}
\end{minipage}
\caption{Sensitivity of the value function $v(x)$ with respect to $\beta$ (left) and with respect to $\delta$ (right). 
The former is plotted for $\beta = 1.01,1.1,2,5,10, 20$ and the latter is for $\delta = 0.1, 0.2, 0.5, 1.0, 2, 5, 10$.  The points $(b^*, v(b^*))$ are indicated by down-pointing triangles. In particular, on the right panel, we plot the value functions (dotted) along with the value function without the absolutely continuous assumption as in \cite{BKY} (solid) with its optimal reflection level (red circle).
} \label{figure_sensitivity}
\end{center}
\end{figure}

We now study the sensitivity of the value function with respect to $\beta$ and $\delta$. The left panel of Figure \ref{figure_sensitivity} shows the value functions for various values of $\beta$: the value function is  decreasing in $\beta$ uniformly in $x$, and $b^*$ decreases to $0$ as $\beta \downarrow 1$.  The right panel shows those for various values of $\delta$ along with the unrestricted case \cite{BKY}: the value function is confirmed to be increasing in $\delta$ uniformly in $x$ and, as shown in Remark \ref{remark_convergence_delta}, the optimal refraction level $b^*$ as well as the value function converge to those in \cite{BKY}.


\appendix

\section{Proof of Lemma 4.2} \label{section_proof}
	
	By the definition of $v$ as a supremum, it follows that $v_{\hat{\pi}}(x)\leq v(x)$ for all $x\geq0$. We write $w:=v_{\hat{\pi}}$ and show that $w(x)\geq v_\pi(x)$ for all $\pi\in\mathcal{A}$ for all $x\geq0$. 
	
	Fix $\pi\in \mathcal{A}$. Here, without loss of generality, we can focus on those strategies $\pi$ such that $\int_{[0, \infty)}e^{-qs}\mathrm{d} R^\pi_s < \infty$ a.s. (which implies $R^\pi_t < \infty$ for all $t > 0$); otherwise, the fact that $\E_x [\int_{0}^{\infty}e^{-qs} \ell^\pi_s \mathrm{d}s ] \leq \delta /q$, implies $v_\pi (x) =-\infty$. 
	Let $(T_n)_{n\in\mathbb{N}}$ be the sequence of stopping times defined by $T_n :=\inf\{t>0:{V}^\pi_t>n \}$. 
	Since ${V}^\pi$ is a semi-martingale and $w$ is sufficiently smooth on $(0, \infty)$ and continuously differentiable 
	at zero by assumption, we can use the change of variables/Meyer-It\^o's formula (cf.\ Theorems II.31 and II.32 of \cite{protter})  to the stopped process $(e^{-q(t\wedge T_n)}w({V}^\pi_{t\wedge T_n}); t \geq 0)$ to deduce under $\mathbb{P}_x$ that
	\begin{equation*}
		\label{impulse_verif_1}
		\begin{split}
			e^{-q(t\wedge T_n)}w({V}^\pi_{t\wedge T_n})-w(x)
			= & -\int_{0}^{t\wedge T_n}e^{-qs} q w({V}^\pi_{s-}) \mathrm{d}s
			+\int_{[0, t\wedge T_n]}e^{-qs}w'({V}^\pi_{s-}) \mathrm{d}  ( Y_s- {L}^\pi_s  )  \\
			&
			+ \frac{\sigma^2}{2}\int_0^{t\wedge T_n}e^{-qs}w''({V}^\pi_{s-})\mathrm{d}s \\
			&+\int_{[0, t\wedge T_n]}e^{-qs}w'({V}^\pi_{s-}) \mathrm{d} R_s^{\pi,c} + \sum_{0 \leq s\leq t\wedge T_n}e^{-qs}[\Delta w({V}^\pi_{s-}+\Delta R_s^\pi)]\\
			& + \sum_{0 \leq s\leq t\wedge T_n}e^{-qs}[\Delta w({V}^\pi_{s-}+\Delta Y_s)-w'({V}^\pi_{s-})  \Delta Y_s  ],
		\end{split}
	\end{equation*}
	where $R^{\pi,c}$ is the continuous part of $R^{\pi}$ and we use the following notation: $\Delta \zeta_s:= \zeta_s-\zeta_{s-}$ and $\Delta w(\zeta_s):=w(\zeta_s)-w(\zeta_{s-})$ for any  process $\zeta$. 
	Rewriting the above equation leads to 
	\begin{equation*}
		\begin{split}
			e^{-q(t\wedge T_n)}w({V}^\pi_{t\wedge T_n})  -w(x)
			&=   \int_{0}^{t\wedge T_n}e^{-qs}   (\mathcal{L}_Y-q)w({V}^\pi_{s-})   \mathrm{d}s
			-\int_{0}^{t\wedge T_n}e^{-qs}w'({V}^\pi_{s-})\mathrm{d}{L}^\pi_s  
			+ M_{t \wedge T_n}\\
			&+\int_{[0, t\wedge T_n]}e^{-qs}w'({V}^\pi_{s-}) \mathrm{d} R_s^{\pi,c} + \sum_{0 \leq s\leq t\wedge T_n}e^{-qs}[\Delta w({V}^\pi_{s-}+\Delta R_s^\pi)],
			\end{split}
			\end{equation*}
			where 
			\begin{align}\label{def_M_martingale}
			\begin{split}
				M_t &:= \int_0^t \sigma  e^{-qs} w'(V_{s-}^{\pi}) \diff B_s +\lim_{\varepsilon\downarrow 0}\int_{[0,t]} \int_{(\varepsilon,1)}  e^{-qs}w'(V_{s-}^{\pi})y (N(\diff s\times \diff y)-\Pi(\diff y) \diff s)\\
				&+\int_{[0,t]} \int_{(0,\infty)} e^{-qs}(w(V_{s-}^\pi+y)-w(V_{s-}^\pi)-w'(V_{s-}^\pi)y\mathbf{1}_{\{y\in (0, 1)\}})(N(\diff s\times \diff y)-\Pi(\diff y) \diff s), \quad t \geq 0.
				\end{split}
			\end{align}
			Here, $( B_s; s \geq 0 )$ is a standard Brownian motion and $N$ is a Poisson random measure in   the measure space  $([0,\infty)\times (0, \infty),\B [0,\infty)\times \B (0, \infty), \diff s \times \Pi( \diff x))$.
		
					\par On the other hand using the fact that $w'(x)\leq \beta$ for $x>0$, we obtain that
					\begin{align*}
			&\int_{[0, t\wedge T_n]}e^{-qs}w'({V}^\pi_{s-}) \mathrm{d} R_s^{\pi,c} + \sum_{0 \leq s\leq t\wedge T_n}e^{-qs}[\Delta w({V}^\pi_{s-}+\Delta R_s^\pi)]\\
						&\leq \beta\int_{[0, t\wedge T_n]}e^{-qs} \mathrm{d} R_s^{\pi,c} +\beta\sum_{0 \leq s\leq t\wedge T_n} e^{-qs} \Delta R_s^{\pi}=\beta\int_{[0, t\wedge T_n]}e^{-qs} \mathrm{d} R^{\pi}_s.
					\end{align*}
	\par Hence we derive that
	\begin{equation*}
		\begin{split}
			w(x) \geq&
			-\int_{0}^{t\wedge T_n}e^{-qs}  \left[ (\mathcal{L}_Y-q)w({V}^\pi_{s-})-\ell^\pi_sw'({V}^\pi_{s-})+ \ell^\pi_s \right]  \mathrm{d}s-\beta\int_{[0, t\wedge T_n]}e^{-qs} \mathrm{d} R^{\pi}_s \\
			& + \int_{0}^{t\wedge T_n}e^{-qs} \ell^\pi_s \mathrm{d}s - M_{t\wedge T_n} + e^{-q(t\wedge T_n)}w({V}^\pi_{t\wedge T_n}).
		\end{split}
	\end{equation*}
	Using  the assumption \eqref{HJB-inequality} and that $V_{s-}^\pi \geq 0$ and $\ell_s^\pi \in [0, \delta]$ a.s.\ for all $s \geq 0$, we have
	\begin{equation} \label{w_lower}
		\begin{split}
			w(x) \geq &
			\int_{0}^{t\wedge T_n}e^{-qs} \ell^\pi_s \mathrm{d}s -\beta\int_{[0, t\wedge T_n]}e^{-qs}\mathrm{d} R^\pi_s - M_{t\wedge T_n}- m e^{-q(t\wedge T_n)}.
		\end{split}
	\end{equation} 
	In addition by the compensation formula (cf.\ Corollary 4.6 of \cite{K}), $(M_{t \wedge T_n}:t\geq0 )$ is a zero-mean $\mathbb{P}_x$-martingale.   
	\par Now  taking expectations  in \eqref{w_lower} and
	letting $t$ and $n$ go to infinity ($T_n\nearrow\infty$ $\mathbb{P}_x$-a.s.), the monotone convergence theorem gives  (applied separately for $\E_x [\int_{0}^{t\wedge T_n}e^{-qs} \ell^\pi_s \mathrm{d}s ]$ and $\E_x (\beta\int_{[0, t\wedge T_n]}e^{-qs}\mathrm{d} R^\pi_s) $)
	\begin{equation*}
		w(x) \geq \mathbb{E}_x \left( \int_{0}^{\infty}e^{-qs} \ell^\pi_s \mathrm{d}s-\beta\int_{[0,\infty)}e^{-qs}\mathrm{d} R^\pi_s \right) =v_\pi(x),
	\end{equation*}
	where the expectation makes sense because $\E_x (\int_{0}^{\infty}e^{-qs} \ell^\pi_s \mathrm{d}s) \in [0, \delta/q]$.
	This completes the proof.






\end{document}